
\documentclass[a4paper,12pt]{article}
\usepackage{marginnote}
\usepackage{hyperref}
\hypersetup{
	colorlinks,
	linkcolor={red!50!black},
	citecolor={blue!50!black},
	urlcolor={blue!80!black}
}
\usepackage{amsmath,amssymb,amsfonts,amscd, graphicx, latexsym, verbatim, multirow, color,extarrows}
\graphicspath{{figures/}}
\usepackage{relsize}

\usepackage{xfp}			
\usepackage{ifthen}			
\usepackage[shortlabels]{enumitem}
\usepackage{spath3}			
\usepackage{quoting,xparse}	

\usepackage{mathrsfs}
\usepackage[font={scriptsize}]{caption}
\usepackage{tikz}
\usepackage{tikz-cd}
	\usetikzlibrary{calc}
	\usetikzlibrary{intersections}
	\usetikzlibrary[decorations.pathmorphing,mindmap]
	\usetikzlibrary{decorations.markings}
	\usetikzlibrary{decorations.pathreplacing}
\usepackage{framed}
\usepackage[marginparwidth=20mm]{geometry}
\usepackage{ stmaryrd }
\definecolor{shadecolor}{rgb}{.9, .9, .9}
\definecolor{gr}{rgb}{.2, 1, .2}

\NewDocumentCommand{\bywhom}{m}{
	{\nobreak\hfill\penalty50\hskip1em\null\nobreak
		\hfill\mbox{\normalfont(#1)}%
		\parfillskip=0pt \finalhyphendemerits=0 \par}%
}
\NewDocumentEnvironment{commnote}{m}
{\begin{quoting}[
		indentfirst=true,
		leftmargin=\parindent,
		rightmargin=\parindent]\itshape
		\color{red!50!black}}
	{\bywhom{#1}\end{quoting}}


\newcommand{\circsize}{3pt}
\newcommand{\vY}[1]
	{\draw[fill=black] (0,0) circle (#1);}
\newcommand{\vI}[1]{
	\begin{scope}[scale=.8]
	\draw[fill=white] (0,0) circle (#1);
	\draw (-.7*#1,-.7*#1) -- (.7*#1,.7*#1);
	\draw (.7*#1,-.7*#1) -- (-.7*#1,.7*#1);
	\end{scope}
	}
\tikzset{
	emphline/.style=
	{line width=2mm,yellow},
	hedge/.style n args={1}{
		postaction={decorate},
		decoration={markings,
			mark= at position 0 with \vY{#1},
			mark= at position .9995 with \vI{#1} }  
	},
	hedge/.default=3px,
	fedge/.style n args={1}{
		postaction={decorate},
		decoration={markings,
			mark= at position 0 with \vY{#1},
			mark= at position .5 with \vI{#1},
			mark= at position .999 with \vY{#1}
		}  
	},
	fedge/.default=3pt,
	fedgeshiftmid/.style n args={1}{
		postaction={decorate},
		decoration={markings,
			mark= at position 0 with \vY{#1},
			mark= at position .5 with \vI{#1},
			mark= at position 1 with \vY{#1}
		}  
	},
		brace/.style={
			decoration={brace, mirror},
			decorate
	}
}
\newcommand{\bctr}[3]{barycentric cs:#1=1,#2=1,#3=1} 	
\newcommand{\TwinEdges}[4]								
{
	\coordinate (X) at (intersection of #1--#2 and #3--#4);
	\draw[halfa] (#1)--(X);
	\draw[halfb] (X)--(#2);
	\draw
	[cross,circle, fill=white,name]
	(X) circle (2pt) node {};
}
\newcommand{\TwinEdgesBold}[4]							
{
	\coordinate (X) at (intersection of #1--#2 and #3--#4);
	\draw[emphline] (#1)--(#2);
	\draw[halfa,ultra thick] (#1)--(X);
	\draw[halfb,ultra thick] (X)--(#2);
	\draw
	[cross,circle, fill=white,name]
	(X) circle (2pt) node {};
}
\makeatletter	
\tikzset
{anchor at angle/.code=
	{\pgfpointtransformed{\pgfpointpolar{#1}{1pt}}
		\pgfmathanglebetweenpoints{\pgfqpoint{-\pgf@x}{-\pgf@y}}{\pgfpointorigin}
		\def\tikz@anchor{\pgfmathresult}
	}
}
\makeatother
\tikzset			
{
	line1/.style=
	{lightgray},
	line2/.style={},
	halfa/.style=
	{ultra thick,blue},
	halfb/.style=
	{ultra thick,green!50!black},
	cross/.style={path picture={
			\draw[black]
			(path picture bounding box.south east) -- (path picture bounding box.north west) (path picture bounding box.south west) -- (path picture bounding box.north east);
	}}
}

\newcommand{\NiL}[1]{\fpeval{round(2^{#1})}} 	
\newcommand{\vertex}[2]{(\fpeval{(2*{\inteval{#2}}-1)*2/\NiL{\fpeval{#1}}},\Step*\fpeval{#1})}

\newcommand{\edge}[3][gray]
{
	\draw[#1, fedge] #2 -- #3;
}

\newcommand{\ghostfork}[2]
{
	\draw[dotted] \vertex{#1}{#2} -- \vertex{1+#1}{2*#2};
	\draw[dotted] \vertex{#1}{#2} -- \vertex{1+#1}{2*#2-1};
}

\newcommand{\halfedge}[3][gray]
{
	\draw[#1, hedge] #2 -- #3;
}

\newcommand{\halfedgebezier}[4][gray]
{
	\draw[#1,hedge] #2  .. controls #4 ..  #3;
}

\usetikzlibrary{mindmap,matrix,arrows,decorations.pathmorphing,patterns,fadings}
\usepackage{subfig, pgf}
\usepackage{amsfonts, amssymb, amsmath, amsthm}
\usepackage[normalem]{ulem} 
\usepackage[cmtip,all]{xy}
\newcommand{\longsquiggly}{\xymatrix{{}\ar@{~>}[r]&{}}}

\newcommand{\tens}{\raisebox{.35mm}{$\mathsmaller{\mathsmaller{\ttimes}}$}}
\newcommand{\F}{\mathcal F}
\newcommand{\G}{\mathcal G }

\renewcommand{\mod}{\mathsf{Mod}}
\newcommand{\mcg}{\mathbf{MCG}}
\newcommand{\aut}{\mathsf{Aut}}
\newcommand{\out}{\mathsf{Out}}

\newcommand{\mor}{\textrm{Mor}}
\newcommand{\obj}{\textrm{Obj}}

\newcommand{\bull}{{\mathlarger{\bullet}}}
\newcommand{\oasterisk}{\bull}

\newcommand{\ccdot}{\!\cdot\!}

\newcommand{\ttimes}{{\mathsmaller \otimes}}

\newcommand{\B}{{\mathcal B}}
\renewcommand{\H}{{\mathcal H}}

\newcommand{\Z}{{\mathsf Z}}

\newcommand{\Q}{{\mathsf Q}}
\newcommand\nt[1]{\textcolor{red}{{\small{#1}}}}


\newcommand{\T}{{\mathcal{T}}}

\newcommand{\psl}{ \mathsf{PSL}_2 (\Z)    }
\newcommand{\ppsl}{ \mathsf{PPSL}_2 (\Z)    }

\definecolor{green}{RGB}{117, 165, 50}

\newcommand{\pimcg}{\textcolor{magenta}{\mathbf{\Pi MG}}}
\newcommand{\omg}{\textcolor{blue}{\mathbf{\Omega MG}}}
\newcommand{\oomg}{\textcolor{blue}{\mathsf{OMG}}}
\newcommand{\pmg}{\textcolor{magenta}{\mathsf{PMG}}}

\newtheorem{theorem}{Theorem}[section] 
\newtheorem{lemma}[theorem]{Lemma}     
\newtheorem{corollary}[theorem]{Corollary}
\newtheorem{proposition}[theorem]{Proposition}
\theoremstyle{definition}
\newtheorem{definition}[theorem]{Definition}
\newtheorem{example}[theorem]{Example}
\newtheorem*{remark}{Remark}
\definecolor{shadecolor}{rgb}{.9, .9, .9}
\usepackage{framed}

\newcommand{\sherh}[1]{\fboxsep=1pt\setlength{\fboxrule}{0pt}
	\begin{center}
		\fbox{\colorbox{green}{
				\begin{minipage}[t]{14.5cm}
					#1
				\end{minipage}
			}
		}
\end{center}}
\renewcommand{\sherh}[1]{}

\title{Mapping class groupoids and Thompson's groups}
\author{
	Mustafa Topkara\footnote{
		{Department of Mathematics, Mimar Sinan Fine Arts University.}
		{Cumhuriyet Mah. Silahşör Cad. No:71, Şişli,}
		{\.{I}stanbul, Turkey}
		} 
		\and
	A. Muhammed Uluda\u{g}\footnote{
		{Department of Mathematics, Galatasaray University.}
		{\c{C}{\i}ra\u{g}an Cad. No. 36, 34349 Be\c{s}ikta\c{s},}
		{\.{I}stanbul, Turkey}}\and
	\"Ozge \"Ulkem$^\dag$\footnote{
        {Institute of Mathematics, Academia Sinica.  Astronomy-Mathematics
Building, Taipei 10617, Taiwan}}\and
	Ayberk Zeytin$^\dag$
}

\begin{document}
	\tikzset{->-/.style={decoration={
				markings,
				mark=at position #1 with {\arrow{>}}},postaction={decorate}}}
	
	\maketitle
	
	
	\begin{abstract}
		We introduce a groupoid $\pimcg$, called the fundamental modular groupoid, which is a variant of Penner's mapping class groupoid. We study how it relates to the surface mapping class groups and Thompson's group $\mathsf T$. 
We also introduce larger groupoid $\omg$, which is related to outer automorphisms of free groups and Thompson's group $\mathsf V$ in a similar manner.				
	\end{abstract}
	
	{\small \paragraph{Keywords}
		Mapping class group, mapping class groupoid, outer automorphism of a free group, Thompson's groups, flip, modular graph, modular group, near automorphism group, partial automorphism group, spheromorphism group.}
		
	\tableofcontents

	\section{Introduction}
	\label{sec:introduction}
	Let $S=S^g_n$ be an oriented surface of genus $g$ and with $n\geq 1$ punctures. We assume that $S$ is of finite type ($g, n<\infty$)  but this hypothesis will be relaxed later.
	
	Our aim in this paper is to introduce the (disconnected) groupoid $\pimcg$, called the {\it fundamental modular groupoid} and study its connections to 
	\begin{itemize} 
		\item Penner's mapping class groupoid and {${\mod(S)}$}   (mapping class group of $S$) 
		\item {${\mathsf T}$}  (Thompson's group under its guise as the group of partial automorphisms of the infinite binary planar tree).
	\end{itemize}
	Roughly speaking, the object set of $\pimcg$ is the set $\mathrm{MGR}_\ell$ of certain ribbon graphs $\G$ with an enumeration of their edges (called a labeling). Morphisms of $\pimcg$ consists of flip-induced isomorphisms between the fundamental groupoids of these ribbon graphs.
	
	Isotropy groups of $\pimcg$ yield a new class of ``{mapping class-Thompson hybrid groups}''. One has the diagram
	\begin{equation}
	\begin{tikzcd}
\pmg \arrow[r,hook] \arrow[d, two heads]
& \pimcg \arrow[d, two heads] \\
\mod(S)  \arrow[r,hook]
& \mcg_\ell(S)
\end{tikzcd}
	\end{equation}
	where 
	$\mcg_\ell(S)$ is a variation Penner's mapping class groupoid of $S$; and $\pmg$ is the isotropy group of an appropriate connected component of $\pimcg$.

	In Section 2, we review two  versions of mapping class groupoids (due to Penner, \cite{universal}) $\mcg_1$ and $\mcg_2$ whose objects are triangulations with a distinguished oriented edge (d.o.e) and spines with a d.o.e, respectively.  These two groupoids are canonically isomorphic via duality.
	
	In Section 3, we introduce a third version of the mapping class groupoid $\mcg_\ell(S)$ (which is not isomorphic to the previous two versions). This is a variation of another version (again due to Penner,  \cite{decorated}) of the mapping class groupoid in the sense that the objects are graphs that are labeled via enumerations of their half-edges, and not just edges as in Penner's version. We then introduce the groupoid $\pimcg(S)$, whose morphisms are same as $\mcg_\ell(S)$ and whose isomorphisms are fundamental groups of labeled trivalent ribbon graphs (called {\it modular graphs} in the paper). This latter groupoid can be defined for arbitrary labeled modular graphs (i.e. not only for those graphs which are embedded as spines in a surface of finite type).
	
	Section 4 is devoted to the study of this larger groupoid $\pimcg$. In Section 5 we show that the isotropy group (denoted $\pmg(\F)$) of the connected component of $\pimcg$ containing the Farey tree $\F$, is isomorphic to the near automorphism group of $\F$, with the canonical surjection to the partial automorphism group $\aut^\infty(\F)$. This latter group is known to be isomorphic to Thompson's group $\mathsf T$ and is a subgroup of Neretin's spheromorphism group; denoted $\aut^\infty(\F^c)$ here. (\cite{yves}, \cite{vlad})

In Section 6, we initiate the study of  the connected components of $\pimcg$ containing the bush-like modular graphs, and we finish by stating a conjecture about their isotropy groups.	
	
Our construction of $\pimcg$ allows us to enrich the groupoid with some extra morphisms, called shuffles, and construct the extension
$\omg$ of $\pimcg$. Denoting its isotropy group of the connected component of  $\F$ by $\oomg(\F)$, we show that $\oomg(\F)$ canonically surjects onto Thompson's group $\mathsf V$. When  $\G$ is finite, $\oomg(\G)$ is a finite extension of the group 
$\out(\mathsf F)$ of outer automorphisms of the free group $\mathsf F\simeq\pi_1(\G)$ of finite rank. 
There is a considerable literature dedicated to the groups $\out(\mathsf F)$, see~\cite{vogtmann}.
In our setting, Thompson's group $\mathsf V$ appears as an analogue of $\out(\mathsf F)$, where $\mathsf F$ is a free group. Moreover, the groups $\oomg(\G)$ with infinite $\G$ provide generalizations of $\out(\mathsf F)$. Is there a cell complex, akin to  Culler-Vogtmann's outer space, on which $\oomg(\G)$ acts?  In any case, it is of interest to find presentations of $\oomg(\G)$ (for finite and infinite $\G$).

	
	%
	\paragraph{Notation.} We use sans serif fonts $\Z, \mathsf T \dots$ to denote groups, bold letters $\mathbf X$, $\mathbf S\dots$ to denote groupoids, script fonts $\mathscr C$, $\mathscr D\dots$ to denote general categories and finally calligraphic letters $\mathcal F, \G  \dots $ to denote graphs. 
	
	We define and use the following notations in the sequel:\\
	
	\bigskip
	\noindent
	
	\hspace{-10mm}
	{\small
		\begin{tabular}{lll}
			Symbol&Name&Page\\ \hline 
			$S=S_n^g$& Surface of genus $g$ with $n>0$ punctures & \pageref{Sng}\\
			$\mod(S)$&Mapping class group of $S$&\pageref{mcgsng}\\
			$\mathrm{TRN}(S)$&Set of isotopy classes of ideal triangulations of $S$&\pageref{sicidts}\\
			$\overrightarrow{\mathrm{TRN}}(S)$&
			\!\!\!\begin{tabular}{l} Set of isotopy classes of ideal triangulations of $S$\\ with a distinguished oriented edge\end{tabular}
			&\pageref{sicidtsdoe}\\
			$\mathrm{Isot}(X,x)$&
			The isotropy group of object $x$ in groupoid $X$.
			&\pageref{isotrrr}\\
			$\mcg_1(S)$&Mapping class groupoid of $S$, triangulation version&\pageref{mcgtv}\\
			$|\overrightarrow{\mathrm{TRN}}(S)|$&
			Set of combinatorial types of elements of $\overrightarrow{\mathrm{TRN}}(S)$&\pageref{sctetrn}\\
			$\mathrm{SPN}(S)$&Set of isotopy classes of spines of $S$&\pageref{sicsSng}\\
			$\overrightarrow{\mathrm{SPN}}(S)$&
			\!\!\!\begin{tabular}{l} Set of isotopy classes of spines of $S$ with a distinguished \\oriented edge\end{tabular}
			&\pageref{sicsSngdoe}\\
			$|\overrightarrow{\mathrm{SPN}}(S)|$&
			Set of combinatorial types of spines of $\overrightarrow{\mathrm{SPN}}(S)$&\pageref{spnsng}\\
			$\mcg_2(S)$&Mapping class groupoid of $S$, spine version&\pageref{mcgrsv}\\
			${\mathrm{SPN}^*}(S)$& Set of isotopy classes of pairs $[(\G , f)\hookrightarrow S]$&\pageref{spnstar}\\
			$\G^{*}$& Non-Hausdorff model of the topological graph $\G$&\pageref{gstar}\\
			$\mathrm{MGR}(S)$&Set of modular graphs obtained from spines&\pageref{mgrspine}\\
			$\overrightarrow{\mathrm{MGR}}(S)$&Set of modular graphs with a distinguished non-oriented edge&\pageref{mgrspinedue}\\
			$\mathrm{SPN}_\ell(S)$&Set labeled modular graph spines of $S$&\pageref{lmspine}\\
			$\mcg_\ell(S)$&Mapping class groupoid of $S$, labeled spine version&\pageref{mcgrell}\\
			${\Pi}_1^{\G^*}$& Fundamental groupoid of $\G^{*}$ &\pageref{tildapi1} \\
			$\widetilde{\Pi}_1^{\G^*}$& Graph fundamental groupoid of $\G^{*}$&\pageref{graphpi1}\\
			$\pimcg(S)$&Fundamental modular groupoid of $S$&\pageref{fmgrs}\\
			$\pimcg$&Fundamental modular groupoid&\pageref{fmgr}\\
			$\pmg(\G)$& Isotropy group of $\G$ in $\pimcg$ &\pageref{modgdef}\\
			$\mathsf T\simeq \mathsf{PPL}_2(\mathsf Z), \,\mathsf F,\mathsf V$& Thompson's groups&\pageref{thmpgr}\\
			$\mathsf{PPSL}_2(\mathsf Z) \simeq\mathsf T $&Group of pw-$\psl$ homeomorphisms of $S^1$ with $\Q$-brake points &\pageref{thmpgr}\\
			$\F$& Farey tree &\pageref{fareytreee}\\
			$\aut^\infty(\F)\simeq \mathsf T$& Group of partial $\F$-automorphisms &\pageref{germsof}\\
			$\aut^\infty(\F^c)$& Neretin's spheromorphism group&\pageref{nerretin}\\
			$\mathbf{NMGR}_\ell$& The groupoid of near isomorphisms of modular graphs&\pageref{nmgrl}\\
			$\mathbf{PMGR}_\ell$& The groupoid of partial isomorphisms of modular graphs&\pageref{pmgrl}\\
			$\omg$&Outer modular groupoid&\pageref{oomg}\\
			$\oomg(\G)$& Isotropy group of $\G$ in $\omg$ &\pageref{oomg}\\
$\mathbf{PUN}$& The groupoid of bijections of puncture sets of modular graphs&\pageref{purrpunn}\\
$\mathbf{PUR}$& The puncture preserving subgroupoid of $\pimcg$ &\pageref{purrpunn}
	\end{tabular}}

	\section{Two versions of Mapping Class Groupoids}
	\label{sec:mcg}
	
	Here we recall the definitions of the mapping class groupoids introduced by Penner \cite{universal} as exposed by Mosher \cite{mosher}. Let $S=S_n^g$ be an orientable surface of genus $g$ and with $n\geq 1$ punctures.\label{Sng} Denote by $\mod(S)$ 
	the mapping class group of $S$ which is defined to be the group of orientation-preserving homeomorphisms of $S$ modulo isotopies \label{mcgsng} (where homeomorphisms are allowed to exchange punctures).
	
	In this section, we assume that $S$ is of finite type, i.e. $g,n<\infty$. Our aim is to eventually relax this assumption. Moreover, we assume that $S$ has negative Euler characteristic, i.e.
	$\chi(S)=2-2g-n<0$.
	
	\begin{definition}\label{ideal} 
		{\rm An {\it ideal arc} of $S$ is  an embedded arc connecting
			punctures in $S$, which is not homotopic to a point relative to punctures. 
			An {\it ideal cell decomposition of $S$} is a collection of ideal arcs so that each region complementary to arcs is a polygon
			with vertices among the punctures. A maximal ideal cell decomposition is called an {\it ideal triangulation}. }
	\end{definition}
	
	Denote the set of ideal triangulations of $S$ modulo isotopy as
	\begin{equation*}
		{\mathrm{TRN}}(S):=\{ \mbox{ideal triangulations of } S \} /\mbox{isotopies}
	\end{equation*}
	\label{sicidts}
	and those with a distinguished oriented edge (d.o.e.) modulo isotopy as \label{sicidtsdoe}
	\begin{equation*}
		\overrightarrow{\mathrm{TRN}}(S):=\{ \mbox{ideal triangulations of } S \mbox{ with a d.o.e.}\} /\mbox{isotopies.}
	\end{equation*}
	Note that $\mathrm{TRN}(S)$ and $\overrightarrow{\mathrm{TRN}}(S)$ are countably infinite discrete sets.
	
	From now on, `{triangulation}' means `{ideal triangulation}'. Due to the existence of triangulations with automorphisms, the obvious $\mod(S)$-action on $\mathrm{TRN}(S)$ is not free.
	On the other hand, the  $\mod(S)$-action on $\overrightarrow{\mathrm{TRN}}(S)$ is free \cite{mosher}.
	
	Suppose that $J$ is a set admitting a free action of a group $\mathsf H$ from the left. We can associate a groupoid  $\llbracket\mathsf H\backslash J\rrbracket$ to this action, whose 
	objects are $\mathsf H$-orbits of $J$ and morphisms are $\mathsf H$-orbits of $J\times J$, i.e.
	\begin{align}
		\obj(\llbracket\mathsf H\backslash J\rrbracket)&:=\mathsf H\backslash J=\{\mathsf Hx \,:\, x\in J\}, \notag\\
		\mor_{\llbracket\mathsf H\backslash  J\rrbracket}(\mathsf Hx,\mathsf Hy)&:=\{\mathsf H(x',y')\, |\, \mathsf H x'= \mathsf Hx, \,  \mathsf H y'= \mathsf Hy\}.\label{eq:ActCatMorSet}
	\end{align}
	The composition $\mathsf H(x,y)\circ \mathsf H(y,z)$ is defined as $\mathsf H(x,z)$.
	By construction, the isotropy groups of this groupoid are isomorphic to $ \mathsf H$. (Recall that, for any groupoid $\mathbf X$, and an object $x$ of $\mathbf X$, the set of morphisms from $x$ to $x$ constitute a group, which is called the {\it isotropy group of $\mathbf X$ at $x$} and denoted \label{isotrrr}
	$\mathrm{Isot}(\mathbf X,x)=\mor_{\mathbf X}(x,x)$).
	\begin{definition} The {\it mapping class groupoid (first version)}  \label{mcgtv} $\mcg_1(S)$ is the groupoid associated to the free 
		$\mod(S)$-action on $\overrightarrow{\mathrm{TRN}}(S)$, i.e.
		\begin{equation}
			\mcg_1(S):=\llbracket\mod(S)\backslash\overrightarrow{\mathrm{TRN}}(S)\rrbracket.
		\end{equation}
	\end{definition}
	Thus, the isotropy group of any element in $\mcg_1(S)$ is isomorphic to $\mod(S)$.
	The object set of $\mcg_1(S)$ is the set 
	$$
	|\overrightarrow{\mathrm{TRN}}(S)|:=\mod(S)\backslash \overrightarrow{\mathrm{TRN}}(S).
	$$ 
	Whenever two triangulations of $S$ are combinatorially equivalent, i.e. are isomorphic as cell-complexes, there exists a homeomorphisms of $S$ which carries one to the other. Hence, the set $|\overrightarrow{\mathrm{TRN}}(S)|$
	corresponds to `{combinatorial types}' of triangulations of $S$ with a d.o.e. \label{sctetrn} This set is finite if $S$ is of finite type.
	Morphisms of $\mcg_1(S)$ are the $\mod(S)$-orbits 
	\begin{align*}
		\mor_{\mcg_1}(|\mathcal T_1, \vec{e_1}|,&|\mathcal T_2, \vec{e_2}|)=\\
		&\Bigl\{ \mod(S)([\mathcal T_1^\prime, \vec{e_1^\prime}], [\mathcal T_2^\prime, \vec{e_2^\prime}]) \,\, \Bigl |\Bigr. \,\,
		|\mathcal T_1^\prime, \vec{e_1^\prime}|=
		|\mathcal T_1, \vec{e_1}|, \,
		|\mathcal T_2^\prime, \vec{e_2^\prime}|=
		|\mathcal T_2, \vec{e_2}|
		\Bigr\},
	\end{align*}
	where $[\mathcal T, \vec{e}]\in \overrightarrow{\mathrm{TRN}}(S)$ denotes the isotopy type of the triangulation-with-a-d.o.e. 
	$(\mathcal T, \vec{e})$,  and $|\mathcal T, \vec{e}|\in \obj(\mcg_1)$ denotes its {{`}{combinatorial type}'}.
	
	\subsection{Dual picture: spines}
	A (topological) {\it graph} $\G $ is a one-dimensional CW-complex comprised of vertices $V(\G )$ and edges
	$E(\G )$; a {\it ribbon graph} or ({\it fat graph}) is a topological graph together with a cyclic ordering of (half-)edges emanating from each vertex.

	Let $\G \hookrightarrow S$ be an embedding of a topological graph $\G $. We say that $\G \hookrightarrow S$ is a {\it spine} of $S$ 
	if it is dual to an ideal cell decomposition $\mathcal T$ of $S$. 
	In other words, the image of $\G  \hookrightarrow S$ is obtained by putting the vertices of $\G $ inside the cells of $\mathcal T$, 
	picking some paths connecting these vertices by passing through a unique ideal arc of the cell decomposition, and finally by identifying the edges of $\G $ by these paths.

	Every spine $\G \hookrightarrow S$ is a strong deformation retract of $S$. 
	Note that $\G $ acquires  a natural ribbon graph structure from the positive orientation of $S$ via the embedding $\G \hookrightarrow S$. Note also that a spine dual to an ideal triangulation is a trivalent ribbon graph. 
	
	Denote the set of isotopy classes of trivalent ribbon graph spines of $S$ as \label{sicsSng}
	\begin{equation}
		\mathrm{SPN}(S):=\bigl\{\varphi:\G \hookrightarrow S \, |\, \varphi \mbox{ is a spine}\bigr\}/\mbox{isotopy}
	\end{equation}
	and denote the set of isotopy classes of trivalent ribbon graph spines with a d.o.e.  as \label{sicsSngdoe}
	\begin{equation}
		\overrightarrow{\mathrm{SPN}}(S):=\bigl\{\varphi: (\G , \vec{e}) \hookrightarrow S \, \, |\, \varphi  \mbox{ is a spine with a d.o.e.}\bigr\}/\mbox{isotopy}.
	\end{equation}
	Since we assume $S$ to be of finite type, $\mathrm{SPN}(S)$ and $\overrightarrow{\mathrm{SPN}}(S)$ are at most countably infinite discrete sets. Now, $\mod(S)$ acts by post-composition on $\mathrm{SPN}(S)$. This action is not free; however, it is free  on $\overrightarrow{\mathrm{SPN}}(S)$. Therefore we have the associated groupoid $\mcg_2(S)=\llbracket \mod(S)\backslash\overrightarrow{\mathrm{SPN}}(S)\rrbracket$, \label{mcgrsv} whose object set is the set of combinatorial types of trivalent ribbon graph spines with d.o.e. \label{spnsng}
	$$
	|\overrightarrow{\mathrm{SPN}}(S)|:=\mod(S)\backslash\overrightarrow{\mathrm{SPN}}(S).
	$$
	Whenever two spines of $S$ are isomorphic as ribbon graphs, there exists a
	homeomorphism of $S$ which carries one to the other. Hence, the set $|\overrightarrow{\mathrm{SPN}}(S)|$ corresponds to combinatorial types of spines of $S$ with a d.o.e.
	This is a finite set if $S$ is of finite type.

	Let $[(\G , \vec{e})\hookrightarrow S]\in \overrightarrow{\mathrm{SPN}}(S)$ denote the isotopy class of the spine-with-a-d.o.e. 
	$((\G , \vec{e})\hookrightarrow S)$ and let $|\G , \vec{e}|=\mod(S)[(\G , \vec{e})\hookrightarrow S]$ denote its combinatorial type.
	
	The set of morphisms from an object $|\G_1 , \vec{e_1}|$ to the object $|\G_2 , \vec{e_2}|$ in the category $\llbracket\mod(S)\backslash\overrightarrow{\mathrm{SPN}}(S)\rrbracket$ is the set of orbits of the form
	$$ \mod(S)\left([(\G_1^\prime , \vec{e_1^\prime})\hookrightarrow S], [(\G_2^\prime, \vec{e_2^\prime})\hookrightarrow S]\right)$$
	where $|\G_1',e_1'|=|\G_1,e_1|$ and $|\G_2',e_2'|=|\G_2,e_2|$,
	 as defined in Equation \ref{eq:ActCatMorSet}.

	\begin{figure}[h!]
		\begin{center}
			\begin{tikzpicture}[line cap=round,line join=round,>=triangle 45,x=.5cm,y=.5cm]
				\draw [line width=1pt, style=dashed,->-=.5] (-1,0)-- (2,0);
				\draw [line width=.3pt, style=dashed] (-2,2)-- (-1,0);
				\draw [line width=.3pt, style=dashed] (-1,0)-- (-2,-2);
				\draw [line width=.3pt, style=dashed] (2,0)-- (3,2);
				\draw [line width=.3pt, style=dashed] (2,0)-- (3,-2);
				\draw [line width=1pt] (-2.5,0)-- (.5,2);
				\draw [line width=1pt] (.5,2)-- (3.5,0);
				\draw [line width=1pt] (-2.5,0)-- (.5,-2);
				\draw [line width=1pt] (.5,-2)-- (3.5,0);
				\draw [line width=1pt,->-=.5] (.5,-2)-- (.5,2);

				\begin{scriptsize}
					\draw [fill=red] (-1,0) circle (1.5pt);
					\draw [fill=red] (2,0) circle (1.5pt);
					\draw [fill=red] (-2,2) circle (1.5pt);
					\draw [fill=red] (-2,-2) circle (1.5pt);
					\draw [fill=red] (3,2) circle (1.5pt);
					\draw [fill=red] (3,-2) circle (1.5pt);
					\draw [fill=white] (.5,2) circle (2pt);
					\draw [fill=white] (.5,-2) circle (2pt);
					\draw [fill=white] (-2.5,0) circle (2pt);
					\draw [fill=white] (3.5,0) circle (2pt);
					
				\end{scriptsize}
			\end{tikzpicture}
		\end{center}
		\caption{How to choose the d.o.e. in the graph dual to a triangulation. Unbroken arcs constitute the triangulation, dashed lines constitute the dual spine.}
		\label{dualflip}
	\end{figure}

	By duality, we have bijections
	\begin{equation*}
		\mathrm{SPN}(S)\longleftrightarrow \mathrm{TRN}(S), \quad 
		\overrightarrow{\mathrm{SPN}}(S)\longleftrightarrow\overrightarrow{\mathrm{TRN}}(S)
	\end{equation*}
	which are compatible with the $\mod(S)$-action.  Hence we have the duality isomorphism
	\begin{equation*}
		\mcg_1(S)=\llbracket\mod(S)\backslash\overrightarrow{\mathrm{TRN}}(S)\rrbracket\simeq \mcg_2(S)=\llbracket\mod(S)\backslash\overrightarrow{\mathrm{SPN}}(S)\rrbracket.
	\end{equation*}

	\subsection{Flips}
	Given an ideal triangulation $\mathcal T$ of $S$ with an arc $f$ of $\mathcal T$, if $f$ separates two distinct triangles, one obtains a new ideal triangulation by replacing the given arc by the other diagonal of the quadrilateral formed by the neighboring arcs as depicted in Figure~\ref{flipfigure}. This operation is well-defined on the set of isotopy classes and is called a {\it flip}. In case one has the same triangle on both sides of $f$, as in Figure~\ref{degenerate}, then by definition the flip operation leaves the triangulation unmodified. 
	This defines an involution of the set ${\mathrm{TRN}^*}(S)$ of pairs of isotopy classes $[\mathcal T,f]$, where 
	$\mathcal T\in {\mathrm{TRN}}(S)$ and $f$ is an arc of $\mathcal T$.
	By duality there is an operation on trivalent ribbon graph spines, which replaces an H-shaped part of a spine $\G $ of $S$ by an I-shaped graph, thereby producing a new spine $\G '$ of $S$ as depicted in Figure~\ref{flipfigure}. This operation is also called a {\it flip}, which defines the dual involution \label{spnstar}
	$$
	\phi: {\mathrm{SPN}^*}(S) \to {\mathrm{SPN}^*}(S),
	$$ 
	${\mathrm{SPN}^*}(S)$ being the set of isotopy classes of pairs $[(\G , f)\hookrightarrow S]$, where 
	$\G  \hookrightarrow S$ is a spine and $f$ is an edge of $\G $.

	\begin{figure}[h!]
		\begin{center}
			\begin{tikzpicture}[line cap=round,line join=round,>=triangle 45,x=.5cm,y=.5cm]
				\draw [line width=.3pt, style=dashed] (-1,0)-- (2,0);
				\draw [line width=.3pt, style=dashed] (-2,2)-- (-1,0);
				\draw [line width=.3pt, style=dashed] (-1,0)-- (-2,-2);
				\draw [line width=.3pt, style=dashed] (2,0)-- (3,2);
				\draw [line width=.3pt, style=dashed] (2,0)-- (3,-2);
				\draw [line width=1pt] (-2.5,0)-- (.5,2);
				\draw [line width=1pt] (.5,2)-- (3.5,0);
				\draw [line width=1pt] (-2.5,0)-- (.5,-2);
				\draw [line width=1pt] (.5,-2)-- (3.5,0);
				\draw [line width=1pt] (.5,-2)-- (.5,2);

				\begin{scriptsize}
					\draw [fill=black] (-1,0) circle (1.5pt);
					\draw [fill=black] (2,0) circle (1.5pt);
					\draw [fill=red] (-2,2) circle (1.5pt);
					\draw [fill=red] (-2,-2) circle (1.5pt);
					\draw [fill=red] (3,2) circle (1.5pt);
					\draw [fill=red] (3,-2) circle (1.5pt);
					\draw [fill=white] (.5,2) circle (2pt);
					\draw [fill=white] (.5,-2) circle (2pt);
					\draw [fill=white] (-2.5,0) circle (2pt);
					\draw [fill=white] (3.5,0) circle (2pt);
					
				\end{scriptsize}
			\end{tikzpicture}
			\hspace{1cm}\raisebox{.9cm}{\huge $\rightsquigarrow$} \hspace{1cm}
			\begin{tikzpicture}[line cap=round,line join=round,>=triangle 45,x=.4cm,y=.4cm]
				\draw [line width=.3pt, style=dashed] (0,-.5)-- (0,1.5);
				\draw [line width=.3pt, style=dashed] (-3,3)-- (0,1.5);
				\draw [line width=.3pt, style=dashed] (0,1.5)-- (3,3);
				\draw [line width=.3pt, style=dashed] (0,-.5)-- (-3,-2);
				\draw [line width=.3pt, style=dashed] (0,-.5)-- (3,-2);
				\draw [line width=1pt] (-4,0.5)-- (0,-2);
				\draw [line width=1pt] (-4,0.5)-- (0,3);
				\draw [line width=1pt] (4,0.5)-- (0,-2);
				\draw [line width=1pt] (4,0.5)-- (0,3);
				\draw [line width=1pt] (4,0.5)-- (-4,0.5);
				
				\begin{scriptsize}
					\draw [fill=black] (0,-.5) circle (1.5pt);
					\draw [fill=black] (0,1.5) circle (1.5pt);
					\draw [fill=red] (-3,3) circle (1.5pt);
					\draw [fill=red] (3,3) circle (1.5pt);
					\draw [fill=red] (-3,-2) circle (1.5pt);
					\draw [fill=red] (3,-2) circle (1.5pt);
					\draw [fill=white] (-4,0.5) circle (2pt);
					\draw [fill=white] (4,0.5) circle (2pt);
					\draw [fill=white] (0,3) circle (2pt);
					\draw [fill=white] (0,-2) circle (2pt);
					
				\end{scriptsize}
			\end{tikzpicture}
		\end{center}
		
		\caption{A triangulation flip and its effect on the dual trivalent ribbon graph spine (shown by dashed lines). Hollow nodes denotes punctures, and thick lines denotes ideal arcs.}
		\label{flipfigure}
	\end{figure}
	If $f$ is an edge of  $[\G  \hookrightarrow S]\in {\mathrm{SPN}}(S)$ and 
	$\phi([\G  \hookrightarrow S],f)=([\G ' \hookrightarrow S],f')$ then denote the result of a flip on the edge $f$ of $\G$ as
	\begin{equation}
		\phi_f([\G  \hookrightarrow S]):= 
		[\G ' \hookrightarrow S].
	\end{equation}
	
	\begin{figure}[h!]
		\begin{center}
			
			\begin{tikzpicture}[line cap=round,line join=round,>=triangle 45,x=.4cm,y=.4cm]
				\draw [line width=1pt, style=dashed] (-8,0.5)-- (-3,0.5);
				\draw [line width=1pt] (2,0.5)-- (-1.3,0.5);
				
				\begin{scriptsize}
					\draw (-1.7,0.5) ellipse (1.5cm and .9cm);
					\draw [style=dashed] (-1.3,0.5) ellipse (.7cm and .5cm);
					
					\draw [fill=white] (-1.3,0.5) circle (2pt);
					\draw [fill=black] (-3,0.5) circle (2pt);
					\draw [fill=white] (2,0.5) circle (2pt);
				\end{scriptsize}
			\end{tikzpicture}
		\end{center}
		
		\caption{A degenerate triangulation and its dual spine.}
		\label{degenerate}
	\end{figure}
	
	Since $\phi$ commutes with the $\mod(S)$-action it  induces an involution
	\begin{equation}\label{fflip}
		\phi: |{\mathrm{SPN}^*}(S)|\to |{\mathrm{SPN}^*}(S)|,
	\end{equation}
	where $|{\mathrm{SPN}^*}(S)|$ is the set of combinatorial types of spines with a choice of an edge (corresponding to the edge the flip is to be/has been applied to; note that such a choice has no relation with d.o.e.'s). Moreover, if $\phi(|\G,f|)=|\G',f'|$ then there is an obvious pairing between the edge sets of $\G $ and $\G '$
	\begin{equation}
		\hat \phi_f: E(\G ) \longrightarrow E(\G ')
	\end{equation}
	such that $\hat \phi_f(f)=f'$.

	We have the following combinatorial result:
	\begin{lemma}\label{whitehead} (Whitehead~\cite{decorated}, see also \cite{hatcher}.)
		Any two elements of ${\mathrm{TRN}}(S)$ are connected via a finite sequence of flips. 
		Equivalently, any two elements of ${\mathrm{SPN}}(S)$ are connected via a finite sequence of flips.
	\end{lemma}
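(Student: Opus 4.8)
The two assertions are equivalent via the duality bijections $\mathrm{SPN}(S)\longleftrightarrow\mathrm{TRN}(S)$ established above, under which a flip of spines corresponds to a flip of triangulations; so I will argue only for triangulations. The plan is a descent on the geometric intersection number. Given $\mathcal T,\mathcal T'\in\mathrm{TRN}(S)$, isotope them into minimal position (no bigons, and no half-bigons cutting off a single puncture, between an arc of $\mathcal T$ and an arc of $\mathcal T'$), and let $i(\mathcal T,\mathcal T')$ denote the resulting number of transverse intersection points. This is a well-defined nonnegative integer, and I will show that whenever it is positive a single flip strictly decreases it; iterating then reaches $i=0$ in finitely many flips, which by the base case below means $\mathcal T'$ has been reached.

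\textbf{Base case.} If $i(\mathcal T,\mathcal T')=0$, then $\mathcal T$ and $\mathcal T'$ can be realised disjointly, so $\mathcal T\cup\mathcal T'$ is a family of pairwise disjoint ideal arcs containing $\mathcal T$. Since an ideal triangulation is a \emph{maximal} such family (Definition~\ref{ideal}), every arc of $\mathcal T'$ is already isotopic to an arc of $\mathcal T$; as both families have the same cardinality, $\mathcal T=\mathcal T'$ in $\mathrm{TRN}(S)$, and no flip is needed.

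\textbf{Inductive step.} Suppose $i(\mathcal T,\mathcal T')>0$, so some edge $b$ of $\mathcal T'$ crosses $\mathcal T$. The intersection points of $b$ with $\mathcal T$ cut $b$ into segments, each contained in a single triangle of $\mathcal T$; a segment lying in a triangle $t$ enters and leaves through two of its sides, and these two sides meet at a vertex $v$ of $t$, so the segment cuts off the corner at $v$. Among all such corner-cutting segments (over all edges of $\mathcal T'$ and all triangles of $\mathcal T$) choose one, $\sigma$, whose cut-off corner region $\Delta\subset t$ is innermost, i.e. meets no other arc of $\mathcal T'$ in its interior. Flipping one of the two edges of $t$ incident to $v$ replaces it by the opposite diagonal of the adjacent quadrilateral; because $\Delta$ is innermost this move is supported near $v$, it removes the crossing contributed by $\sigma$, and it creates no new crossings once $\mathcal T'$ is re-minimised against the new triangulation. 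Hence $i(\phi_e\mathcal T,\mathcal T')<i(\mathcal T,\mathcal T')$, completing the descent.

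The main obstacle is precisely the verification in the inductive step: one must check, via the innermost corner region, that the chosen flip is genuinely local, that it is the correct one of the two edges at $v$, and that no bigons are introduced so that the intersection number strictly drops. Alternatively, and sidestepping this surgery bookkeeping entirely, one may appeal to Penner's decorated-Teichm\"uller cell decomposition: ideal triangulations index the top-dimensional cells of a decomposition of the connected decorated Teichm\"uller space $\widetilde{\mathcal T}(S)$, codimension-one faces correspond exactly to flips, and connectivity of $\widetilde{\mathcal T}(S)$ yields a flip path between any two top cells (see~\cite{decorated}).
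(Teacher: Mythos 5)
The paper offers no proof of this lemma at all---it is quoted as a classical result with citations to Whitehead, Penner~\cite{decorated} and Hatcher~\cite{hatcher}---so your attempt has to stand on its own. Your reduction to triangulations and your base case are fine, but the inductive step contains a genuine gap, not mere ``surgery bookkeeping'': the claim that flipping one of the two edges of $t$ incident to $v$ ``removes the crossing contributed by $\sigma$ and creates no new crossings'' is false as stated. A flip is not ``supported near $v$'': flipping the edge $e_1$ of $t$ at $v$ replaces it by the opposite diagonal $e_1^*$ of the whole quadrilateral $Q$ formed by the two triangles adjacent to $e_1$, so every crossing with $e_1$ is traded for a possible crossing with $e_1^*$, and these trades happen all over $Q$, far from $\Delta$. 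Concretely, every segment of $\mathcal T'$ that cuts off a corner of $Q$ at an endpoint of the \emph{new} diagonal $e_1^*$ (i.e.\ at the third vertex of $t$, or at the far vertex of the neighbouring triangle) crossed $e_1$ zero times but must cross $e_1^*$ once; your innermost condition on $\Delta$ constrains nothing at those corners, and such segments can be arbitrarily numerous (take $\mathcal T'$ the image of $\mathcal T$ under a high power of a Dehn twist). The same mechanism applies to the other edge at $v$, so both available flips can strictly \emph{increase} $i(\mathcal T,\mathcal T')$. Worse, even the single crossing of $\sigma$ need not disappear: if $b$, after crossing $e_1$, exits $Q$ through the side opposite to the one it entered, it still crosses $e_1^*$ once.

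A correct descent needs a genuinely harder ingredient: either the exchange relation $n_f+n_{f^*}=\max(n_a+n_c,\,n_b+n_d)$ for the crossing numbers of the sides and diagonals of $Q$, combined with a maximum-principle argument showing that some edge satisfies $n_{f^*}<n_f$ unless all crossings vanish; or a restructured induction in which one first flips $\mathcal T$ so that it contains a single chosen arc of $\mathcal T'$, then cuts along that arc (equivalently, inducts on the number of shared arcs). Your fallback---Penner's cell decomposition of decorated Teichm\"uller space, where top cells are triangulations, codimension-one faces are flips, and connectivity does the rest---is sound, but note that this is precisely the content of the reference~\cite{decorated} that the paper itself cites in lieu of a proof; invoking it is a citation, not an independent argument, so it cannot rescue the combinatorial descent that forms the substance of your proposal.
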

	Note that this lemma is obviously not valid for surfaces of infinite type; e.g. spines in Figure~\ref{cccp} are not connected by any finite sequence of flips.

	\paragraph{Flips of spines with a d.o.e.}
	The flip of a trivalent ribbon graph spine $[(\G , \vec{e}) \hookrightarrow S]$ with a d.o.e. 
	is a flip of the underlying spine which maps the d.o.e.s in the obvious manner: if d.o.e. is not the flipped edge, then it is simply preserved; if it is, then rotated counterclockwise 
	(if the d.o.e. is the loop edge in Figure~\ref{degenerate}, then we leave the orientation as it is).
	This defines a map 
	\begin{equation}
		\Phi: \overrightarrow{\mathrm{SPN}}^*(S) \to \overrightarrow{\mathrm{SPN}}^*(S),
	\end{equation}
	which is of order four.
	Observe that the $\mod(S)$-orbit
	\begin{equation}
		\mod(S)\bigl ([(\G , \vec{e}) \hookrightarrow S], 
		[(\G', \vec{e'}) \hookrightarrow S]\bigr)
	\end{equation}
	is an element of $\mcg_2(S)$, 
	where 
	$[\G'\hookrightarrow S]=
	\phi_f([\G \hookrightarrow S])
	$
	and $\vec{e'}$ is the edge ${\hat{\phi}_f(e)}$ 
	with orientation obtained by the counterclockwise rotation described above.

	By Whitehead's lemma $\mcg_2(S)$ is generated by these elements, again called `{flips}' (or `{elementary moves}' in~\cite{mosher}) and {`}{d.o.e. moves}{'}, i.e. orbits
	\begin{equation}
		\mod(S)\bigl ([(\G , \vec{e_1}) \hookrightarrow S], [(\G , \vec{e_2}), \hookrightarrow S]\bigr),
	\end{equation}
	where the two embeddings of $\G$ are the same, and only the choice of the d.o.e. differs
	(see \cite{mosher}, where they are called `{relabelings}'). In fact, with the exception $S_0^3$ and $S_1^1$, the groupoid $\mcg_2(S)$ is already generated by flips, see \cite{decorated}. In particular, d.o.e. moves can be expressed in terms of flips.
	This generation is not free, i.e. there are some relations obeyed by flips. An example is the famous pentagon relation, see Figure~\ref{fig12:pentagram}.
	
		\begin{figure}[h]
	\begin{center}
	\begin{tikzpicture}[auto,scale=0.75]
		\def\Radius{2cm}
		
		
		\node at (0,-3){(1)};
		\coordinate (P0) at (90:\Radius);
		\foreach \i in {1,...,5}
		{
			\def\angle{90+\i*72}
			\coordinate (P\i) at (\angle:\Radius) {};
			\pgfmathtruncatemacro{\j}{\i-1}
			\draw  
			[line1, name path global/.expanded=L\i]
			(P\j)-- 
			(P\i);
			\coordinate (Q\i) at (\angle-32:1.2*\Radius) {};
			\node[anchor at angle=-\angle] at (P\i){
			};
			\node[anchor at angle=-\angle-32] at (Q\i){
			};
			\filldraw[black] (P\i) circle (1pt);
			\filldraw (Q\i) circle (1pt);
		}
		\draw[line1,name path global/.expanded=L6] (P1) -- (P3);
		\draw[line1,name path global/.expanded=L7] (P1) -- (P4);

		\coordinate (R1) at (\bctr{P1}{P2}{P3});
		\coordinate (R2) at (\bctr{P1}{P4}{P3});
		\coordinate (R3) at (\bctr{P1}{P4}{P5});

		\foreach [count=\i] \q/\r in {1/3, 2/1, 3/1, 4/2, 5/3}
		{\pgfmathtruncatemacro{\j}{\i-1}
			\TwinEdges{Q\i}{R\r}{P\j}{P\i}
		}
		\TwinEdgesBold{R1}{R2}{P1}{P3}
		\TwinEdges{R2}{R3}{P1}{P4}
		\path (R1) --  node{$ e $}(R2);
		\path (R2) --  node[anchor=west]{$ f $}(R3);
		\foreach \r in {1,2,3}
		{\filldraw (R\r) circle (2pt);
			\node at (R\r){
			};
		}
		\foreach \i in {1,2,3,4,5}
		{\filldraw[lightgray] (P\i) circle (2pt);
			\filldraw (Q\i) circle (2pt);}
		
		
		\begin{scope}[shift={(7,0)}]
			\node at (0,-3){(2)};
			\coordinate (P0) at (90:\Radius);
			\foreach \i in {1,...,5}
			{
				\def\angle{90+\i*72}
				\coordinate (P\i) at (\angle:\Radius) {};
				\pgfmathtruncatemacro{\j}{\i-1}
				\draw  
				[line1, name path global/.expanded=L\i]
				(P\j)-- 
				(P\i);
				\coordinate (Q\i) at (\angle-32:1.2*\Radius) {};
				\node[anchor at angle=-\angle] at (P\i){
				};
				\node[anchor at angle=-\angle-32] at (Q\i){
				};
				\filldraw[black] (P\i) circle (1pt);
				\filldraw (Q\i) circle (1pt);
			}
			\draw[line1,name path global/.expanded=L6] (P2) -- (P4);
			\draw[line1,name path global/.expanded=L7] (P1) -- (P4);

			\coordinate (R1) at (\bctr{P4}{P2}{P3});
			\coordinate (R2) at (\bctr{P1}{P4}{P2});
			\coordinate (R3) at (\bctr{P1}{P4}{P5});

			\foreach [count=\i] \q/\r in {1/3, 2/2, 3/1, 4/1, 5/3}
			{\pgfmathtruncatemacro{\j}{\i-1}
				\TwinEdges{Q\i}{R\r}{P\j}{P\i}
			}
			\TwinEdges{R1}{R2}{P2}{P4}
			\TwinEdgesBold{R2}{R3}{P1}{P4}
			\path (R1) --  node{$ e $}(R2);
			\path (R2) --  node[anchor=west]{$ f $}(R3);
			\foreach \r in {1,2,3}
			{\filldraw (R\r) circle (2pt);
				\node at (R\r){
				};
			}
			\foreach \i in {1,2,3,4,5}
			{\filldraw[lightgray] (P\i) circle (2pt);
				\filldraw (Q\i) circle (2pt);}
		\end{scope}
		
		
		\begin{scope}[shift={(14,0)}]
			\node at (0,-3){(3)};
			\coordinate (P0) at (90:\Radius);
			\foreach \i in {1,...,5}
			{
				\def\angle{90+\i*72}
				\coordinate (P\i) at (\angle:\Radius) {};
				\pgfmathtruncatemacro{\j}{\i-1}
				\draw  
				[line1, name path global/.expanded=L\i]
				(P\j)-- 
				(P\i);
				\coordinate (Q\i) at (\angle-32:1.2*\Radius) {};
				\node[anchor at angle=-\angle] at (P\i){
				};
				\node[anchor at angle=-\angle-32] at (Q\i){
				};
				\filldraw[black] (P\i) circle (1pt);
				\filldraw (Q\i) circle (1pt);
			}
			\draw[line1,name path global/.expanded=L6] (P2) -- (P4);
			\draw[line1,name path global/.expanded=L7] (P2) -- (P5);

			\coordinate (R1) at (\bctr{P4}{P2}{P3});
			\coordinate (R2) at (\bctr{P1}{P5}{P2});
			\coordinate (R3) at (\bctr{P2}{P4}{P5});

			\foreach [count=\i] \q/\r in {1/2, 2/2, 3/1, 4/1, 5/3}
			{\pgfmathtruncatemacro{\j}{\i-1}
				\TwinEdges{Q\i}{R\r}{P\j}{P\i}
			}
			\TwinEdgesBold{R1}{R3}{P2}{P4}
			\TwinEdges{R3}{R2}{P2}{P5}
			\path (R1) --  node{$ e $}(R3);
			\path (R2) --  node[anchor=south]{$ f $}(R3);
			\foreach \r in {1,2,3}
			{\filldraw (R\r) circle (2pt);
				\node at (R\r){
				};
			}
			\foreach \i in {1,2,3,4,5}
			{\filldraw[lightgray] (P\i) circle (2pt);
				\filldraw (Q\i) circle (2pt);}
		\end{scope}
		
		
		\begin{scope}[shift={(0,-6)}]
			\node at (0,-3){(4)};
			\coordinate (P0) at (90:\Radius);
			\foreach \i in {1,...,5}
			{
				\def\angle{90+\i*72}
				\coordinate (P\i) at (\angle:\Radius) {};
				\pgfmathtruncatemacro{\j}{\i-1}
				\draw  
				[line1, name path global/.expanded=L\i]
				(P\j)-- 
				(P\i);
				\coordinate (Q\i) at (\angle-32:1.2*\Radius) {};
				\node[anchor at angle=-\angle] at (P\i){
				};
				\node[anchor at angle=-\angle-32] at (Q\i){
				};
				\filldraw[black] (P\i) circle (1pt);
				\filldraw (Q\i) circle (1pt);
			}
			\draw[line1,name path global/.expanded=L6] (P3) -- (P5);
			\draw[line1,name path global/.expanded=L7] (P2) -- (P5);

			\coordinate (R1) at (\bctr{P4}{P5}{P3});
			\coordinate (R2) at (\bctr{P1}{P5}{P2});
			\coordinate (R3) at (\bctr{P2}{P3}{P5});

			\foreach [count=\i] \q/\r in {1/2, 2/2, 3/3, 4/1, 5/1}
			{\pgfmathtruncatemacro{\j}{\i-1}
				\TwinEdges{Q\i}{R\r}{P\j}{P\i}
			}
			\TwinEdges{R1}{R3}{P3}{P5}
			\TwinEdgesBold{R3}{R2}{P2}{P5}
			\path (R1) --  node{$ e $}(R3);
			\path (R2) --  node[anchor=north]{$ f $}(R3);
			\foreach \r in {1,2,3}
			{\filldraw (R\r) circle (2pt);
				\node at (R\r){
				};
			}
			\foreach \i in {1,2,3,4,5}
			{\filldraw[lightgray] (P\i) circle (2pt);
				\filldraw (Q\i) circle (2pt);}
		\end{scope}
		
		
		\begin{scope}[shift={(7,-6)}]
			\node at (0,-3){(5)};
			\coordinate (P0) at (90:\Radius);
			\foreach \i in {1,...,5}
			{
				\def\angle{90+\i*72}
				\coordinate (P\i) at (\angle:\Radius) {};
				\pgfmathtruncatemacro{\j}{\i-1}
				\draw  
				[line1, name path global/.expanded=L\i]
				(P\j)-- 
				(P\i);
				\coordinate (Q\i) at (\angle-32:1.2*\Radius) {};
				\node[anchor at angle=-\angle] at (P\i){
				};
				\node[anchor at angle=-\angle-32] at (Q\i){
				};
				\filldraw[black] (P\i) circle (1pt);
				\filldraw (Q\i) circle (1pt);
			}
			\draw[line1,name path global/.expanded=L6] (P3) -- (P5);
			\draw[line1,name path global/.expanded=L7] (P1) -- (P3);

			\coordinate (R1) at (\bctr{P4}{P5}{P3});
			\coordinate (R2) at (\bctr{P1}{P3}{P5});
			\coordinate (R3) at (\bctr{P2}{P3}{P1});

			\foreach [count=\i] \q/\r in {1/2, 2/3, 3/3, 4/1, 5/1}
			{\pgfmathtruncatemacro{\j}{\i-1}
				\TwinEdges{Q\i}{R\r}{P\j}{P\i}
			}
			\TwinEdgesBold{R1}{R2}{P3}{P5}
			\TwinEdges{R2}{R3}{P1}{P3}
			\path (R1) --  node[anchor=south]
			{$ e $}(R2);
			\path (R2) --  node[anchor=170]{$ f $}(R3);
			\foreach \r in {1,2,3}
			{\filldraw (R\r) circle (2pt);
				\node at (R\r){
				};
			}
			\foreach \i in {1,2,3,4,5}
			{\filldraw[lightgray] (P\i) circle (2pt);
				\filldraw (Q\i) circle (2pt);}
		\end{scope}
		
		
		\begin{scope}[shift={(14,-6)}]
			\node at (0,-3){(6)};
			\coordinate (P0) at (90:\Radius);
			\foreach \i in {1,...,5}
			{
				\def\angle{90+\i*72}
				\coordinate (P\i) at (\angle:\Radius) {};
				\pgfmathtruncatemacro{\j}{\i-1}
				\draw  
				[line1, name path global/.expanded=L\i]
				(P\j)-- 
				(P\i);
				\coordinate (Q\i) at (\angle-32:1.2*\Radius) {};
				\node[anchor at angle=-\angle] at (P\i){
				};
				\node[anchor at angle=-\angle-32] at (Q\i){
				};
				\filldraw[black] (P\i) circle (1pt);
				\filldraw (Q\i) circle (1pt);
			}
			\draw[line1,name path global/.expanded=L6] (P1) -- (P4);
			\draw[line1,name path global/.expanded=L7] (P1) -- (P3);

			\coordinate (R1) at (\bctr{P4}{P5}{P1});
			\coordinate (R2) at (\bctr{P1}{P3}{P4});
			\coordinate (R3) at (\bctr{P2}{P3}{P1});

			\foreach [count=\i] \q/\r in {1/1, 2/3, 3/3, 4/2, 5/1}
			{\pgfmathtruncatemacro{\j}{\i-1}
				\TwinEdges{Q\i}{R\r}{P\j}{P\i}
			}
			\TwinEdges{R1}{R2}{P1}{P4}
			\TwinEdges{R2}{R3}{P1}{P3}
			\path (R1) --  node[anchor=20]
			{$ e $}(R2);
			\path (R2) --  node[anchor=120]{$ f $}(R3);
			\foreach \r in {1,2,3}
			{\filldraw (R\r) circle (2pt);
				\node at (R\r){
				};
			}
			\foreach \i in {1,2,3,4,5}
			{\filldraw[lightgray] (P\i) circle (2pt);
				\filldraw (Q\i) circle (2pt);}
		\end{scope}
	\end{tikzpicture}
	\end{center}
	\caption{At each step, the flip is applied on the highlighted edge. After five flips, the twin edges represented by $ e $ and $ f $ are transposed.}	
	\label{fig12:pentagram}
	\end{figure}
	
	Note that one may define the dual notion of the flip of a triangulation with a d.o.e. as well. To see how two flips with d.o.e.s are related, see Figure~\ref{dualflip}.
	
	\section{Fundamental modular groupoid of a surface} 
	Let $S^g_n$ be a surface. In the previous section, we have described two versions of mapping class groupoids:
	\begin{equation}
		\mcg_1(S)=\llbracket\mod(S)\backslash\overrightarrow{\mathrm{TRN}}(S)\rrbracket\simeq \mcg_2(S)=\llbracket\mod(S)\backslash\overrightarrow{\mathrm{SPN}}(S)\rrbracket,
	\end{equation}
	whose set of objects are combinatorial triangulations with a d.o.e. and combinatorial spines with a d.o.e., respectively. Our aim in this section is to introduce a new version, the (connected) fundamental modular groupoid of $S$, denoted $\pimcg(S)$, whose objects are the fundamental groupoids (in the `graph' sense) of combinatorial spines with a d.o.e.. 
	
	\subsection{Combinatorial graphs as non-Hausdorff spaces}
	Let $\G$ be a trivalent ribbon graph. We shall model its combinatorial type by a non-Hausdorff topological space. To achieve this, we first make $\G$ into a bipartite ribbon graph, by putting a degree-2 vertex (denoted \raisebox{.5mm}{$\ttimes$} in Figure~\ref{bip}) in the middle of each edge of $\G$. The resulting bipartite graph is 2-3-valent, i.e. it has vertices of two types, one type always of degree 2, other type always of degree 3, and such that $ \G $ has no edge between two vertices of the same type. 
	\begin{figure}[h!]
		\begin{center}
			\begin{tikzpicture}[line cap=round,line join=round,>=triangle 45,x=.5cm,y=.5cm]
				\draw [line width=.3pt] (-1,0)-- (2,0);
				\draw [line width=.3pt] (-2,2)-- (-1,0);
				\draw [line width=.3pt] (-1,0)-- (-2,-2);
				\draw [line width=.3pt] (2,0)-- (3,2);
				\draw [line width=.3pt] (2,0)-- (3,-2);

				\begin{scriptsize}
					\draw [fill=black] (-1,0) circle (1.5pt);
					\draw [fill=black] (2,0) circle (1.5pt);
					\draw [fill=black] (-2,2) circle (1.5pt);
					\draw [fill=black] (-2,-2) circle (1.5pt);
					\draw [fill=black] (3,2) circle (1.5pt);
					\draw [fill=black] (3,-2) circle (1.5pt);
					
				\end{scriptsize}
			\end{tikzpicture}
			\hspace{1cm}\raisebox{.9cm}{\huge $\rightsquigarrow$} \hspace{1cm}
			\begin{tikzpicture}[line cap=round,line join=round,>=triangle 45,x=.5cm,y=.5cm]
				\draw [line width=.3pt] (-1,0)-- (2,0);
				\draw [line width=.3pt] (-2,2)-- (-1,0);
				\draw [line width=.3pt] (-1,0)-- (-2,-2);
				\draw [line width=.3pt] (2,0)-- (3,2);
				\draw [line width=.3pt] (2,0)-- (3,-2);

				\begin{scriptsize}
					\draw [fill=black] (-1,0) circle (1.5pt);
					\draw [fill=white, color=white] (.5,0) circle (1.9pt);\node at (0.5,0) {$\ttimes$};
					\draw [fill=black] (2,0) circle (1.5pt);
					\draw [fill=black] (-2,2) circle (1.5pt);
					\draw [fill=white, color=white] (-1.5,1) circle (1.9pt);\node at (-1.5,1) {$\ttimes$};
					\draw [fill=black] (-2,-2) circle (1.5pt);
					\draw [fill=white, color=white] (-1.5,-1) circle (1.9pt);\node at (-1.5,-1) {$\ttimes$};
					\draw [fill=black] (3,2) circle (1.5pt);
					\draw [fill=white, color=white] (2.5,1) circle (1.9pt);\node at (2.5,1) {$\ttimes$};
					\draw [fill=black] (3,-2) circle (1.5pt);
					\draw [fill=white, color=white] (2.5,-1) circle (1.9pt);\node at (2.5,-1) {$\ttimes$};

				\end{scriptsize}
			\end{tikzpicture}
		\end{center}
		
		\caption{A trivalent graph  and the corresponding bipartite graph.}
		\label{bip}
	\end{figure}
	
	This bipartition operation is necessary to handle the loops of the trivalent graph. In fact, it is very natural to represent spines of punctured surfaces by bipartite graphs, see~\cite{panorama}. 
	For example, a choice of a d.o.e. for the trivalent graph is equivalent to a choice of just a distinguished unoriented edge (d.u.e. for short) of the corresponding bipartite graph; by taking the half-edge pointed by the d.o.e., see Figure~\ref{fig:d.o.e.d.u.e.}.
	
	From now on, we will assume that our graphs are bipartite 2-3-valent graphs. Considering 2-3-valent graphs instead of trivalent graphs changes nothing about the definition of the sets ${\mathrm{SPN}(S)}$, $\overrightarrow{\mathrm{SPN}(S)}$ and the groupoid $\mcg_1(S)$ (except the natural correspondence between d.o.e.'s and d.u.e.'s remarked above).
	
	\begin{figure}[h!]
		\begin{center}
			\begin{tikzpicture}[line cap=round,line join=round,>=triangle 45,x=.5cm,y=.5cm]
				\draw [line width=.9pt, color=blue, ->] (-1,0)-- (2,0);
				\draw [line width=.3pt] (-2,2)-- (-1,0);
				\draw [line width=.3pt] (-1,0)-- (-2,-2);
				\draw [line width=.3pt] (2,0)-- (3,2);
				\draw [line width=.3pt] (2,0)-- (3,-2);

				\begin{scriptsize}
					\draw [fill=black] (-1,0) circle (1.5pt);
					\draw [fill=black] (2,0) circle (1.5pt);
					\draw [fill=black] (-2,2) circle (1.5pt);
					\draw [fill=black] (-2,-2) circle (1.5pt);
					\draw [fill=black] (3,2) circle (1.5pt);
					\draw [fill=black] (3,-2) circle (1.5pt);
					
				\end{scriptsize}
			\end{tikzpicture}
			\hspace{1cm}\raisebox{.9cm}{\huge $\rightsquigarrow$} \hspace{1cm}
			\begin{tikzpicture}[line cap=round,line join=round,>=triangle 45,x=.5cm,y=.5cm]
				\draw [line width=.3pt] (-1,0)-- (0.5,0);
				\draw [line width=.9pt, color=blue] (0.5,0)-- (2,0);
				\draw [line width=.3pt] (-2,2)-- (-1,0);
				\draw [line width=.3pt] (-1,0)-- (-2,-2);
				\draw [line width=.3pt] (2,0)-- (3,2);
				\draw [line width=.3pt] (2,0)-- (3,-2);

				\begin{scriptsize}
					\draw [fill=black] (-1,0) circle (1.5pt);
					\draw [fill=white, color=white] (.5,0) circle (1.9pt);\node at (0.5,0) {$\ttimes$};
					\draw [fill=black] (2,0) circle (1.5pt);
					\draw [fill=black] (-2,2) circle (1.5pt);
					\draw [fill=white, color=white] (-1.5,1) circle (1.9pt);\node at (-1.5,1) {$\ttimes$};
					\draw [fill=black] (-2,-2) circle (1.5pt);
					\draw [fill=white, color=white] (-1.5,-1) circle (1.9pt);\node at (-1.5,-1) {$\ttimes$};
					\draw [fill=black] (3,2) circle (1.5pt);
					\draw [fill=white, color=white] (2.5,1) circle (1.9pt);\node at (2.5,1) {$\ttimes$};
					\draw [fill=black] (3,-2) circle (1.5pt);
					\draw [fill=white, color=white] (2.5,-1) circle (1.9pt);\node at (2.5,-1) {$\ttimes$};

				\end{scriptsize}
			\end{tikzpicture}
		\end{center}
		
		\caption{A distinguished oriented edge (d.o.e.) and the corresponding distinguished unoriented half-edge (d.u.e.)}
		\label{fig:d.o.e.d.u.e.}
	\end{figure}

	We model a graph $\G$ by the topological space $\G^*$ \label{gstar}
	whose set of points is $E(\G) \cup V(\G)$.  
	Open sets of $\G^*$ are generated by edge singletons and  stars of vertices, i.e. sets of the form $v\cup s(v)$, where 
	$v\in V(\G )$ is a vertex and $s(v)$ is the set of edges incident to $v$, see Figure~\ref{fig:modular/space}. 
	
	\label{modular}
	
	The points of the space $\G^* $ from $E(\G )$ will be called `{edges}' and those from $V(\G )$ will be called the `{vertices}' of $\G^* $. Note that a ribbon graph structure on the trivalent graph $\G$ induces a similar structure on the space $\G^*$, i.e. a cyclic ordering of $s(v)$ for every vertex $v$.  
	
	Recall that a 2-3-valent ribbon graph $\G$, together with its ribbon graph structure was called a `{modular graph}'\footnote{The naming `{modular graph}' is related to the fact that these spaces (together with the ribbon structure) are in 1-1 correspondence with the torsion-free subgroups of the modular group $\psl$, see~\cite {UZD}} in~\cite {UZD}. Accordingly,
	we will call the space $\G^*$ obtained from a such graph a `{modular graph}' as well, by abuse of language.
	
	
	\begin{figure}[h!]
		\begin{center}
			
			\begin{tikzpicture}[line cap=round,line join=round,>=triangle 45,x=.5cm,y=.5cm]
				\draw [line width=.3pt, color=red] (-1,0)-- (0.5,0);
				\draw [line width=.3pt, color=red] (0.5,0)-- (2,0);
				\draw [line width=.3pt, color=red] (-2,2)-- (-1,0);
				\draw [line width=.3pt, color=red] (-1,0)-- (-2,-2);
				\draw [line width=.3pt, color=red] (2,0)-- (3,2);
				\draw [line width=.3pt, color=red] (2,0)-- (3,-2);

				\begin{scriptsize}
					\draw [fill=black] (-1,0) circle (1.5pt);
					\draw [fill=white, color=white] (.5,0) circle (1.9pt);\node at (0.5,0) {$\ttimes$};
					\draw [fill=black] (2,0) circle (1.5pt);
					\draw [fill=black] (-2,2) circle (1.5pt);
					\draw [fill=white, color=white] (-1.5,1) circle (1.9pt);\node at (-1.5,1) {$\ttimes$};
					\draw [fill=black] (-2,-2) circle (1.5pt);
					\draw [fill=white, color=white] (-1.5,-1) circle (1.9pt);\node at (-1.5,-1) {$\ttimes$};
					\draw [fill=black] (3,2) circle (1.5pt);
					\draw [fill=white, color=white] (2.5,1) circle (1.9pt);\node at (2.5,1) {$\ttimes$};
					\draw [fill=black] (3,-2) circle (1.5pt);
					\draw [fill=white, color=white] (2.5,-1) circle (1.9pt);\node at (2.5,-1) {$\ttimes$};

				\end{scriptsize}
			\end{tikzpicture}
			\hspace{1cm}\raisebox{.9cm}{\huge $\rightsquigarrow$} \hspace{1cm}
			\raisebox{6mm}{\begin{tikzpicture}[line cap=round,line join=round,>=triangle 45,x=.5cm,y=.5cm]

					\begin{scriptsize}
						\draw [fill=black] (-0.95,0.95) circle (1.5pt);
						\draw [fill=black] (-0.95,-0.95) circle (1.5pt);
						\draw [fill=red] (-0.75,0.75) circle (2.5pt);
						\draw [fill=red] (-0.3,-0.25) circle (2.5pt);
						\draw [fill=red] (-0.3,0.25) circle (2.5pt);\node at (-0.57,0.46) {$\ttimes$};%
						\draw [fill=red] (-0.75,-0.75) circle (2.5pt);\node at (-0.57,-0.46) {$\ttimes$};%
						\draw [fill=black] (-.16,0) circle (1.5pt);%
						\draw [fill=red] (0.14,0) circle (2.5pt);%
						\draw [fill=white, color=white] (.5,0) circle (1.9pt);\node at (0.5,0) {$\ttimes$};%
						\draw [fill=red] (0.85,0) circle (2.5pt);%
						\draw [fill=black] (1.13,0) circle (1.5pt);%
						\draw [fill=red] (1.25,0.25) circle (2.5pt);\node at (1.45,.55) {$\ttimes$};%
						\draw [fill=red] (1.75,0.75) circle (2.5pt);%
						\draw [fill=red] (1.75,-0.75) circle (2.5pt);%
						\draw [fill=red] (1.25,-0.25) circle (2.5pt);\node at (1.45,-.55) {$\ttimes$};%
						\draw [fill=black] (1.9,1) circle (1.5pt);%
						\draw [fill=black] (1.9,-1) circle (1.5pt);%
						
					\end{scriptsize}
			\end{tikzpicture}}
		\end{center}
		
		\caption{A 2-3-valent graph and the corresponding modular graph - red points on the right correspond to the edges of the graph on the left.}
		\label{fig:modular/space}
	\end{figure}

	Denote the set of modular graphs obtained from the spines of $S$ (called the {\it modular spines of $S$}) by \label{mgrspine}
	\begin{eqnarray}
		\mathrm{MGR}(S):=
		\{\G^* : \G \hookrightarrow S \} /\mbox{ribbon structure preserving homeomorphisms}
	\end{eqnarray}
	and those with a d.u.e.  by \label{mgrspinedue}
	\begin{eqnarray}\label{mgrsss}
		\overrightarrow{\mathrm{MGR}}(S):=
		\{ (\G^* , {e}) : (\G,{e}) \hookrightarrow S \} 
		{\Big /}
		\begin{array}{l}
			\mbox{homeomorphisms of modular graphs }\\ \mbox{preserving  d.u.e. and ribbon structure.}
		\end{array}
	\end{eqnarray}
	Thus, elements of $\overrightarrow{\mathrm{MGR}}(S)$ can be described as the modular graphs of genus $g$ with $n$ punctures and with a distinguished edge.
	
	Elements of these sets will be denoted respectively by $|\G^*|$ and $|\G^*,e|$.
	If $S$ is of finite type, then these sets are finite.
	There are bijections between the sets of combinatorial types
	\begin{eqnarray}
		\mathrm{MGR}(S)  \longleftrightarrow |\mathrm{SPN}(S)|:=\mod(S)\backslash\mathrm{SPN}(S) \\
		\overrightarrow{\mathrm{MGR}}(S)\longleftrightarrow |\overrightarrow{\mathrm{SPN}}(S)|:=\mod(S)\backslash\overrightarrow{\mathrm{SPN}}(S).
	\end{eqnarray}

	\subsection{A variation of Penner's mapping class groupoid} 
	
	In this section, we will adapt Penner's definition of the Mapping Class Groupoid (see \cite{decorated}) to labeled modular spines.
	By a {\it labeled modular spine},  we mean a modular spine 
	$\mathcal G$ of $S$ together with a bijection (called a {\it labeling} on $\mathcal G$)
	\begin{align}\label{labellling}
		\ell: E(\mathcal G)\to \{1, 2, \dots, 6g-6+3n\}\times\{+,-\}
	\end{align}
	such that if $e$ and $e'$ are two half-edges meeting at a degree-2 vertex, then for some $k$ either 
	$\ell(e)=(k,+)$ and $\ell(e')=(k,-)$ or vice versa.

		\begin{figure}[h]
		\begin{center}
			
			\begin{tikzpicture}[scale=0.5, transform shape]
				\node [circle,draw, minimum size=4cm
				,label=above: {$(1,+)$},label=below:{$(1,-)$}
				] {};
				\filldraw[black] (2,0) circle (6pt);				
				\begin{scope}[shift={(-2,0)}]
					\vI{6pt}
				\end{scope} 
				\draw (2,0) -- (8,0) node[pos=0.75,above]{(2,-)} node[pos=0.2, above]{(2,+)};
				\begin{scope}[shift={(5,0)}]
					\vI{6pt}
				\end{scope}  
				\filldraw[black] (8,0) circle (6pt);
				\node [circle,draw, minimum size=4cm
				,label=above: {$(3,-)$},label=below:{$(3,+)$}
				] at (10,0) {};
				\begin{scope}[shift={(12,0)}]
					\vI{6pt}
				\end{scope} 
				
			\end{tikzpicture}
			
		\end{center}
		\caption{A labeled modular spine of genus 0 and 3 punctures. }
	\end{figure}
	
	$\mod(S)$ acts freely on the set $\mathrm{SPN}_\ell(S)$ of all labeled modular spines of $\mathcal G$, see \cite{decorated}. \label{lmspine}(Thanks to the $\pm$ signs we don't need to consider the case where $S$ is the punctured torus separately as in~\cite{decorated}).

	\begin{definition}\label{mcgrell}
		The {\it mapping class groupoid} (third version) $\mcg_\ell(S)$ is the category associated to the free  
		$\mod(S)$ action on the set $\mathrm{SPN}_\ell(S)$ of isotopy classes of labeled modular spines in $S$:
		\[ \mcg_\ell(S):=\llbracket\mod(S) \backslash \mathrm{SPN}_\ell(S)\rrbracket. \]
	\end{definition}
	\begin{definition}
		Let $(\G, \ell)$ be a labeled spine and $f$  a half-edge of $\G$. A {\it labeled flip} 
		of $(\G, \ell)$ on $f$ is an operation which gives us a new labeled spine $(\G', \ell')$ 
		(and a half-edge $f'$). Let $e$ be the edge which meets $f$ at a degree-2 vertex.
		We distinguish two cases:
		\begin{itemize}
			\item In case $e$ and $f$ do not meet at a second vertex, then the flip is described by 
			changing the spine locally around $f$ 
			as shown in Figure~\ref{xcxcxxx}
			and keeping the rest as it is.
			The labeling is preserved out of this local picture and for each $x\in\{a,b,c,d,e,f\}$
			the edge $x'$ in the picture on the right is given the same label as the edge $x$ on the left.
			\item 
			In case $e$ and $f$ do meet at a second vertex (as in Fig.~\ref{ozge}),
			then
			$(\G',\ell'):=(\G, \ell)$.
		\end{itemize}
\begin{figure}[h]
		\begin{center}
			\begin{tikzpicture}[line cap=round,line join=round]
				\draw (0,0) circle (0.8) ; 
\node [circle,draw, minimum size= 1.6cm, label=left: {$ e$},label=right:{$f$}
   ] {};
				\filldraw[black] (0,-0.8) circle (2pt)  ;
				\draw[hedge] (0,-0.8) -- (0,-1.8) ;
				\draw[dotted] (0,-1.8) -- (0,-2.8);
				\draw[fill=white] (0,0.8) circle (2pt);
				\begin{scope}[shift={(0,.8)}]
					\vI{3pt}
				\end{scope} 
			\end{tikzpicture}
			
			\caption{The case of two half-edges meeting at two vertices.}
			\label{ozge}
		\end{center}
\end{figure}
		$\mod(S)$-orbit of a labeled flip is called the {\it class of the flip.}
		Note that paired half-edges $e$ and $f$ define the same flip.
	\end{definition}
	
	\begin{figure}[h!] 
		\begin{center}
			\begin{tikzpicture}[line cap=round,line join=round,>=triangle 45,x=.5cm,y=.5cm]
				\draw [line width=1pt, color=blue] (-1,0)-- (.5,0) node[font=\scriptsize,midway,above]{$f$} ; 
				\draw [line width=1pt, color=blue] (.5,0)-- (2,0) node[font=\scriptsize,midway,above]{$e$} ; 
				\draw  [fill=white, color=white] (.5,0) circle (1.9pt);\node at (.5,0){$\ttimes$};
				
				\draw [line width=1pt] (-2,2)-- (-1,0) node[font=\scriptsize,midway,above]{\,\, $a$} ;  
				\draw [line width=1pt] (-1,0)-- (-2,-2) node[font=\scriptsize,midway,below]{ \,\, $b$} ; 
				\draw [line width=1pt] (2,0)-- (3,2) node[font=\scriptsize,midway,above]{$c$ \,\,} ;
				\draw [line width=1pt] (2,0)-- (3,-2) node[font=\scriptsize,midway,below]{$d$ \,\,} ; 
				\begin{scriptsize}
					\draw [fill=black] (-1,0) circle (3pt);
					\draw [fill=black] (2,0) circle (3pt);
					\draw  [fill=white, color=white] (-2,2) circle (1.9pt);\node at (-2,2){$\ttimes$};
					\draw  [fill=white, color=white]  (-2,-2) circle (1.9pt);\node at (-2,-2) {$\ttimes$};
					\draw  [fill=white, color=white]  (3,2) circle (1.9pt);\node at  (3,2){$\ttimes$};
					\draw  [fill=white, color=white]  (3,-2) circle (1.9pt);\node at (3,-2){$\ttimes$};
				\end{scriptsize}
			\end{tikzpicture}
			\hspace{1cm}\raisebox{.9cm}{\huge ${\rightsquigarrow}$ } \hspace{1cm}
			\raisebox{-.1cm}{\begin{tikzpicture}[line cap=round,line join=round,>=triangle 45,x=.4cm,y=.4cm]
					\draw [line width=1pt, color=blue] (0,-1)-- (0,.5) node[font=\scriptsize,midway,left]{$f'$} ;
					\draw [line width=1pt, color=blue] (0,.5)-- (0,2) node[font=\scriptsize,midway,left]{$e'$} ;
					\draw  [fill=white, color=white] (0,.5) circle (1.9pt);\node at (0,.5){$\ttimes$};
					
					\draw [line width=1pt] (-2,3)-- (0,2) node[font=\scriptsize,midway,above]{$a'$} ;
					\draw [line width=1pt] (0,2)-- (2,3) node[font=\scriptsize,midway,above]{$c'$} ;
					\draw [line width=1pt] (0,-1)-- (-2,-2) node[font=\scriptsize,midway,below]{$b'$} ;
					\draw [line width=1pt] (0,-1)-- (2,-2) node[font=\scriptsize,midway,below]{$d'$} ;
					\begin{scriptsize}
						\draw [fill=black] (0,-1) circle (3pt);
						\draw [fill=black] (0,2) circle (3pt);
						\draw  [fill=white, color=white]  (-2,3) circle (1.9pt);\node at (-2,3){$\ttimes$};
						\draw  [fill=white, color=white]  (2,3) circle (1.9pt);\node at (2,3){$\ttimes$};
						\draw  [fill=white, color=white]  (-2,-2) circle (1.9pt);\node at (-2,-2){$\ttimes$};
						\draw  [fill=white, color=white]  (2,-2) circle (1.9pt);\node at(2,-2) {$\ttimes$};
					\end{scriptsize}
			\end{tikzpicture}}
		\end{center}
		\caption{A flip of a modular spine.}
		\label{xcxcxxx}
	\end{figure}

	This defines an order-4 map 
	\begin{equation}
		\Psi: |{\mathrm{SPN_\ell^*}}(S)| \to |{\mathrm{SPN_\ell^*}}(S)|,
	\end{equation}
	where $*$ indicates the choice of an edge, which the labeled flip will be applied to.
	Each such labeled flip, where $f$ is the flipped edge,
	induces a pairing
	\begin{equation}\label{psisapka}
		\hat\psi_f: E(\G) \to E(\G')
	\end{equation}
	sending each edge to the edge with the same label (i.e. $\psi(f)=f'$, $\psi(e)=e'$; $\psi(a)=a'$, $\psi(b)=b'$, $\psi(c)=c'$, $\psi(d)=d'$  and similarly for the remaining edges of $\G$ and $\G'$).
	\begin{theorem}
		$\mcg_\ell(S)$ is generated by classes of flips and $\mod(S)$-orbits of permutations of spine labels,
		sending the paired edges $(m, \pm)$ to the paired edges $(n,\pm)$ possibly changing the signs. 
	\end{theorem}
	\begin{proof}
		By the Whitehead lemma, flips are transitive on the set of spines. Since we've also added all possible permutations of labels, these elements generate the groupoid.
	\end{proof}

	\subsection{Fundamental groupoids of modular graphs}
	In this section and in the next one, we adopt Brown's terminology~\cite{brown}
	about fundamental groupoids.
	
	Let $\G$ be a modular graph (i.e. a 2-3-valent ribbon graph). Let $\G^*$ be the space defined in page~\pageref{modular}.
	There is the canonical ``{retraction}'' map
	\begin{equation}
		f_{\G }: \G  \to \G^*,
	\end{equation}
	which sends vertices to vertices and each point $x$ on an edge $e\in E(\G )$ to the point $e\in \G^*$.
	The fundamental groupoid $\Pi_1^{\G^*}$ is defined in the usual way, that is, its objects are the points of $\G^*$, and morphisms are homotopy classes of paths. So $f_{\G }$ induces a morphism of groupoids 
	\begin{equation}
		f_{\G }^\flat:\Pi_1^{\G } \to \Pi_1^{\G^*}
	\end{equation}
	with the left-inverse
	\begin{equation}
		f_{\G }^\sharp: \Pi_1^{\G^*}{ \to \Pi_1^{\G }},
	\end{equation}
	defined as follows: If a path starts and ends at a vertex-point of ${\G^* }$, then one may lift it to ${\G }$ in a canonical way. 
	If one or both endpoints is an edge-point 
	$e$ of ${\G^* }$, then lift it to the path on ${\G }$, whose corresponding endpoint(s) is(are) the midpoint(s) of the edge $e$. Note that  $f_{\G }^\sharp$ is not induced by any continuous map 
	$\G^* \to \G $.
	
	Observe that $f_{\G }^\flat$ is an equivalence of groupoids with the inverse $f_{\G }^\sharp$.
	Indeed, $f_{\G }^\flat\circ f_{\G }^\sharp:\Pi_1^{\G^*}{ \to \Pi_1^{\G^* }}$ is the identity morphism. On the other hand, 
	the morphism $f_{\G }^\sharp \circ f_{\G }^\flat:\Pi_1^{\G}{ \to \Pi_1^{\G }}$ is homotopic (i.e. naturally transforms) to the identity, via
	\begin{equation}
		\gamma\in \Pi_1^{\G }(x,y) \to \alpha_t \cdot\gamma \cdot \beta_t \in  \Pi_1^{\G }(x_t,y_t)
	\end{equation}
	where $x_t=x-(x-1/2)t$ moves towards the midpoint of the edge containing $x$ if $x$ is not a vertex and $x_t\equiv x$ if $x$ is a vertex; similarly
	$y_t=y-(y-1/2)t$ moves towards the midpoint of the edge containing $y$  if $y$ is not a vertex and $y_t\equiv y$ if $y$ is a vertex; 
	$\alpha_t(s) =x_t(1-s)+sx$ is a path from $x_t$ to $x$ inside the edge containing $x$,
	and  $\beta_t(s)=y(1-s)+sy_t$ is a path from $y$ to $y_t$ inside the edge containing $y$.
	
	Note that this homotopy restricts to identity on the subgroupoid of $\Pi_1^{\G }$ which consists of paths whose endpoints lies on vertices or midpoints of ${\G }$, i.e. it may be considered as a deformation retract at the fundamental groupoid level. 
	
	Hence, the groupoids $\Pi_1^{\G }$ and  $\Pi_1^{\G^*}$ are equivalent 
	under an equivalence  determined uniquely  by $f_\G$.
	
	Denote by \label{tildapi1}
	\begin{equation}
		\widetilde{\Pi}_1^{\G^*} < \Pi_1^{\G^*}
	\end{equation}
	the subgroupoid of homotopy classes of paths starting and terminating at an edge-point of ${\G^*}$. It is clear that $\widetilde{\Pi}_1^{\G^*}$ is a sub-category of $\Pi_1^{\G^*}$. To see that two mentioned categories are equivalent, we'll show that their skeletons, see \cite[\S~4]{abstract/concrete/categories}, are isomorphic. As both are connected groupoids, their skeletons are merely their fundamental groups. Elements of $\widetilde{\Pi}_1^{\G^*}$ can be represented as sequences of edges of ${\G^*}$,
	$$\gamma=(e_1, e_2, \dots, e_k)$$
	where $e_i, e_{i+1}$ are incident at a single vertex. If $e_i, e_{i+1}$ are incident at two vertices (so that they constitute a loop) then we replace  the comma between $e_i$ and $e_{i+1}$ by a semicolon as in 
	$(\dots, e_i; e_{i+1}, \dots)$ if the path moves from  $e_i$ to $e_{i+1}$ through a 2-valent vertex.
	
	
	%
	%
	%
	%
	%
	\begin{figure}[h!]
		\begin{center}
			\begin{tikzpicture}[line cap=round,line join=round,>=triangle 45,x=.5cm,y=.5cm]
				\draw [line width=1pt] (-0.3,0)-- (2,0);
				\draw [line width=1pt] (2,0)-- (3,2);
				\draw [line width=1pt] (2,0)-- (3,-2);
				\draw [fill=white, color=white](-0.3,0) circle (1.9pt);
				\node at (-0.3,0) {$\ttimes$};
				\node at (-1.05,0) {$c$};
				\draw [fill=black] (2,0) circle (3pt);
				\draw [fill=white, color=white](3,2) circle (1.9pt);
				\node at (3,2) {$\ttimes$};
				\node at (3.5,2.5) {$a$};
				\draw [fill=white, color=white] (3,-2) circle (1.9pt);
				\node at (3,-2) {$\ttimes$};
				\node at (3.5,-2.5) {$b$};
			\end{tikzpicture}
		\end{center}
		
		\caption{Edges meeting at a vertex of degree 3}\label{nonpath}
	\end{figure}
	
	For an element $g = (e_0,e_1,\ldots,e_{k-1},e_0)$ in the fundamental group of $\widetilde{\Pi}_1^{\G^*}$ based at an arbitrary edge $e_0$ the associated path :
	\begin{align*}
		\gamma_g \colon [0,1] & \to \mathcal{G}^*\\
		[0,1/(k+1)) &\to e_0 \\
		1/(k+1) &\to \mbox{ the vertex joining }e_0 \mbox{ to } e_1 \\
		(1/(k+1),2/(k+1)) &\to e_1 \\
		\vdots & \\
		((k-1)/(k+1),k/(k+1)) &\to e_{k-1} \\
		k/(k+1) &\to \mbox{ the vertex joining }e_{k-1} \mbox{ to } e_0 \\
		(k/(k+1),1] &\to e_0 
	\end{align*}
	induces an isomorphism from the isotropy group of $\widetilde{\Pi}_1^{\G^*}$  at $e_0$ to the isotropy group of $\Pi_1^{\G^*}$ at $e_0$. Let us also note that $\widetilde{\Pi}_1^{\G^*}$ is not the fundamental groupoid of some topological space. 
	
	The groupoid $\widetilde{\Pi}_1^{\G^*} $ is generated by paths of length 2 which are subject to the following relations :
	
	\begin{eqnarray*}\label{relations}
		(a,a)=(a), \quad (a,b)\cdot (b,a)=(a), \quad (a,b)\cdot (b,c) \cdot (c,a)=(a)
	\end{eqnarray*}
	and
	\begin{eqnarray*}
		(a;a)=(a), \quad (a;b)\cdot (b;a)=(a).  \end{eqnarray*}

	Note that $(a)$ is the identity of the isotropy group of $\widetilde{\Pi}_1^{\G^*}$ at $a$. The path $(b,a)$ is the inverse of $(a,b)$ and that the third relator can be read as (see Figure~\ref{nonpath})
	\begin{equation*}
		(a,b)\cdot (b,c) =(a,c).
	\end{equation*}
	We call $\widetilde{\Pi}_1^{\G^*}$ the {\it graph fundamental groupoid} of $\G^*$ as opposed to  $\Pi_1^{\G^*}$  which is the usual   fundamental groupoid of $\G^*$ from topology. \label{graphpi1} As they are equivalent, they have the same isotropy, which is $\pi_1(\G)$. 
	
	\subsection{Introduction to the fundamental modular groupoid }
	Now let $\eta_1:\G_1  \hookrightarrow S$ be a spine of $S$. Then since $\G_1 $ is a  deformation retract of $S$, there is an equivalence (i.e. homotopy) $\pi\eta_1$ of groupoids 
	between $\Pi_1^{\G_1 }$ and $\Pi_1^{\mathcal S}$  by the following result:
	\begin{lemma} (Brown~\cite{brown}, 6.5.10)
		If $f, g : X \to Y$ are homotopic maps of spaces, then the induced 
		morphisms $\pi f, \pi g : \Pi_1^X \to \Pi_1^Y$  are homotopic.
	\end{lemma}
	Hence one has equivalences
	\begin{equation}\label{seq}
		\widetilde{\Pi}_1^{\G_1^* } \equiv \Pi_1^{\G_1^* } \equiv \Pi_1^{\G_1} \equiv \Pi_1^{\mathcal S}
	\end{equation}

	If $\G_2\hookrightarrow S$ is another spine, then we have another equivalence $\Pi_1^{\G_2}\equiv\Pi_1^{\mathcal S}$, and so 
	there is an equivalence
	\begin{equation}
		\Pi_1^{\G_2}\equiv \Pi_1^{\G_1} \iff \Pi_1^{\G_2^*}\equiv \Pi_1^{\G_1^*}
		\iff \widetilde{\Pi}_1^{\G_2^*}\equiv \widetilde{\Pi}_1^{\G_1^*},
	\end{equation}
	and the latter equivalence is in fact an isomorphism as we describe below.

	We let 
	\begin{equation}
		\psi_f=\mod(S)\bigl ([(\G_1 , {\ell_1}),\hookrightarrow S], [(\G_2, \ell_2) \hookrightarrow S]\bigr)
	\end{equation}
	where $f$ is a half-edge of $\G_1$ denote the class of a flip inside the groupoid 
	$\mcg_\ell(S)=\llbracket \mod(S)\backslash \mathrm{SPN}_\ell(S)\rrbracket$.	This means that $(\G_2, \ell_2)$ is obtained from $(\G_1, \ell_1)$ via a labeled flip at $f$.
	To define the isomorphism $\widetilde{\Pi}_1^{\G_2^*}\simeq \widetilde{\Pi}_1^{\G_1^*}$, we make the following definition: 
	\begin{definition} \label{bridge}
		We define a map $\psi_f^*:\widetilde{\Pi}_1^{\G_1^*}\to \widetilde{\Pi}_1^{\G_2^*}$ by describing its effect on the generators, as follows:
		\begin{itemize}
			\item If the flipped edge $f$ is on a generator of the form $(f;e)$ then recalling that 
			$\G_1:=\G_2$ in this case,  $\psi_f^*:\widetilde{\Pi}_1^{\G_1^*}\to \widetilde{\Pi}_1^{\G_1^*}$ is defined to be the identity.
			\item Otherwise, recall that there is an induced pairing 
			$\hat\psi_f:E(\G_1^*)\to E(\G_2^*)$, where $f$ is the flipped edge (cf.~(\ref{psisapka}) and Fig.~\ref{xcxcxxx}).
			Let $(x,y)$ (resp. $(x;y)$) be a generator of 
			$\widetilde{\Pi}_1^{\G_1^*}$. 
			If neither $x$ nor $y$ is neighbor of the flipped edge $f$, then  $\psi_f^*$ sends the 
			generator $(x,y)$ (resp. $(x;y)$) of $\widetilde{\Pi}_1^{\G_1^*}$ to the generator 
			$(\hat \psi_f(x), \hat \psi_f(y))$ (resp. $(\hat \psi_f(x); \hat \psi_f(y))$) of $\widetilde{\Pi}_1^{\G_2^*}$. Otherwise, $\psi_f^*$ is defined on the generators seen in Fig.~\ref{xcxcxxx} as follows:
		\end{itemize}
		\begin{equation*}
			\begin{array}{|lll||lll|}
				\hline
				(a,f)&\to&(a',e',f')=(a',e')\cdot (e',f')	&(f,a)&\to&(f',e',a')=(f',e')\cdot(e',a') \\\hline
				(b,f)&\to&(b',f')	&(f,b)&\to&(f',b')\\\hline
				(c,e)&\to&(c',e')	& (e,c)&\to&(e',c')\\\hline
				(d,e)&\to&(d',f',e')=(d',f')\cdot(f',e')	&(e,d)&\to&(e',f',d')=(e',f')\cdot(f',d')\\\hline
				(f,e)&\to&(f',e')&(e,f)&\to&(e',f')\\
				\hline
			\end{array}
		\end{equation*}
	\end{definition}
	\begin{lemma}
		The map $\psi_f^*$ is an isomorphism.
	\end{lemma}
	\begin{proof}
		We investigate the effect of 
		$\psi_f^*\circ \psi_{f'}^*$ on the generator 
		$(a',c')$ of $\widetilde{\Pi}_1^{\G_2^*}$:
		\begin{align*}
			\psi_f^*(\psi_{f'}^*(a',c'))&=\psi_f^*\bigl((a,f,e,c)\bigr)\\
			&=\psi_f^*\bigl((a,f)\cdot (f,e)\cdot(e,c) \bigr)\\
			&=\psi_f^*\bigl((a,f)\bigr)\cdot \psi_f^*\bigl((f,e)\bigr)\cdot \psi_f^*\bigl((e,c)\bigr)\\
			&=(a',e')\cdot (e',f') \cdot (f',e')\cdot (e',c')\\
			&=(a',c').
		\end{align*}
		The last equality follows from the presentation of $\widetilde{\Pi}_1^{\G_2^*}$.
		For  other generators of $\widetilde{\Pi}_1^{\G_2^*}$, this is proved similarly. By symmetry, this works for the generators of $\widetilde{\Pi}_1^{\G_2^*}$; showing that $\psi_f^*$ and $\psi_{f'}^*$ are inverses of each other. Therefore, $\psi_f^*$ is an isomorphism.
	\end{proof}
	We adapt the definition of the set $\overrightarrow{\mathrm{MGR}}(S)$ given in (\ref{mgrsss}) to the labeled case as follows:
	\begin{eqnarray}
		{\mathrm{MGR}_\ell}(S):=
		\{ (\G^* , {\ell}) : (\G,{\ell}) \hookrightarrow S \} 
		{\Big /}
		\begin{array}{l}
			\mbox{homeomorphisms of modular graphs  }\\ \mbox{preserving the labeling  and ribbon structure.}
		\end{array}
	\end{eqnarray}
	
	Thus, elements of ${\mathrm{MGR}_\ell}(S)$
	can be described as the modular graphs of genus $g$ and with $n$ punctures, whose half-edges (red-points in Fig.~\ref{fig:modular/space}) are labeled as in (\ref{labellling}). These elements will be denoted as $|\G^*,\ell|$.

	Flip-induced isomorphisms give rise to the following groupoid construction:
	\label{fmgrs}
	\begin{definition}\label{fmg}
		{\rm The } {\it fundamental modular groupoid of }$S$ {\rm is the groupoid $\pimcg(S)$ whose object set is 
			${\mathrm{MGR}}_\ell(S)$. Morphisms of $\pimcg(S)$ are defined as}
			\begin{equation}\label{grrou}
				\mor_{\pimcg}(|\G^*_1 ,{\ell_1}|,|\G^*_2,{\ell_2}|):=
				\left\{ 
				\begin{array}{l} 
					\mbox{isomorphisms } \widetilde{\Pi}_1^{\mathcal \G^*_1}\to \widetilde{\Pi}_1^{\mathcal \G^*_2} \mbox{ induced}\\
					\mbox{by finite sequences of flips \& }\\
					\mbox{relabelings}
				\end{array}
				\right\},
		\end{equation} 
where an isomorphism $\widetilde{\Pi}_1^{\mathcal \G^*_1}\to \widetilde{\Pi}_1^{\mathcal \G^*_2} $ induced by a relabeling is the ``{identity}'' isomorphism sending each path to itself.
	\end{definition}
	Note that the set ${\mathrm{MGR}_\ell}(S)$ is in canonical bijection with the set $\mod(S) \backslash \mathrm{SPN}_\ell(S)$. We identify the object set of  $\pimcg(S)$ and that of $\mcg_\ell(S)$ via this bijection, and denote them as $|\G^*, \ell|$.
	
	A finite sequence of flips and relabelings on a labeled modular graph of a surface determines a morphism in both categories $\pimcg(S)$ and $\mcg_\ell(S)$. The following theorem describes the functor between these categories determined by this correspondence.
	
\begin{theorem}
	There is a functor $\pimcg(S) \to \mcg_\ell(S)$ which is identity on the object set and which associates graph fundamental groupoid isomorphisms in $\pimcg(S)$ to the morphism in $\pimcg(S)$ that are determined by the same sequence of flips and relabelings. On the isotropy groups, this yields a surjection
	\begin{equation}\label{mctoaut}
		\mathrm{Isot}({\pimcg}, |\G^*, \ell|) \to
		\mathrm{Isot}({\mcg_\ell},|\G^* ,\ell|) \simeq \mod(S).
	\end{equation}
	\label{thm:surjection}
\end{theorem}

\begin{proof}
	Each spine $G\hookrightarrow S$ is a deformation retract of $S$, hence determines an injection $\Pi_1(\G) \hookrightarrow \Pi_1(S)$ of fundamental groupoids	. Taking the composition of this injection with the injection $\widetilde{\Pi}_1^{\G^*} \hookrightarrow \Pi_1(\G)$, we can view $\widetilde{\Pi}_1^{\G^*}$ as a subgroupoid of $\Pi_1(S)$ via an embedding $\Phi_\G$, where we can identify each element of $\G^*$ with the embedding of midpoint $p_e$ of the corresponding edge $e$ of $\G$.
	
	Note that any morphism $|\G,\ell| \to |\G',\ell|$ in $\pimcg(S)$ is represented by a pair of spines $\G \hookrightarrow S$ and $\G' \hookrightarrow S$, related by a finite sequence of flips. Such a sequence also determines a correspondence $E(\G) \to E(\G')$ between the edge sets of the spines. Keeping track of the path taken by the midpoints $p_e$ through flips and isotopies, we obtain paths $\alpha_e$ from $p_e$ to $p'_{e'}$; where our morphism pairs the edge $e$ of $\G$ with the edge $e'$ of $\G'$. Now, given a path $\gamma$ from $e$ to $f$ in $\G^*$, we construct the path $(\Phi'_{\G'})^{-1}\left({\alpha_e}^{-1}\cdot\Phi_{\G}(\gamma)\cdot\alpha_f\right)$ 
	from $e'$ to $f'$ in $\widetilde{\Pi}_1^{\G'}$. This procedure describes a morphism $|\G,\ell| \to |\G',\ell|$ in $\mcg_\ell(S)$, which is the image of the original morphism in $\pimcg(S)$ under our functor.
	
	Functoriality of the procedure is clear since the morphisms in both categories are determined by a sequence of flips. To prove that the functor is indeed well-defined on the set of morphisms of the corresponding categories, we will first establish this fact on the isotropy groups. 
	
	We consider a sequence of flips transforming an embedding 
	$\phi:\G \hookrightarrow S$ to an  embedding 
	$\phi':\G \hookrightarrow S$, inducing $\Phi_\G:\widetilde{\Pi}_1^{\G^*}\to{\Pi}_1(S)$ and $\Phi'_\G:\widetilde{\Pi}_1^{\G^*}\to{\Pi}_1(S)$, respectively. Let us assume that the sequence of flips give identity morphism in $\mathrm{Isot}({\pimcg},|\G^* ,\ell|)$. Below, we will prove that this sequence also describes the identity morphism in $\mathrm{Isot}({\mcg_\ell},|\G^* ,\ell|)$. By assumption, we see that the sequence of flips results in identity map $\widetilde{\Pi}_1^{\G^*} \to \widetilde{\Pi}_1^{\G^*}$. Thus, in particular, each edge is mapped to itself. 
	
	Let $e,f \in E(\G)$ be two adjacent edges of $\G$, and let $\gamma$ represent the path $(e,f)$ in $\Pi_1^{\G^*}$. Then
	$$\gamma=(\Phi'_{\G})^{-1}\left(\alpha_e^{-1}\cdot\Phi_{\G}(\gamma)\cdot\alpha_f\right),
	$$
	and hence
	\begin{equation}\label{eq:deformation}
		\Phi'_{\G}(\gamma)=\alpha_e^{-1}\cdot\Phi_{\G}(\gamma)\cdot\alpha_f .
	\end{equation}
	Thus, $\Phi_\G(\gamma)$ can be deformed to $\Phi'_\G(\gamma)$ while moving the endpoints $p_e, p_f$ to $p_{e'}, p_{f'}$ via $\alpha_e,\alpha_f$; respectively. 
	
	\begin{figure}[!h]
		\centering
		\begin{tikzpicture}[scale=.8]
			\definecolor{isocol}{rgb}{0 , .5 , 0}
			\def\circsize{2}
			\tikzset{
				sl/.style={
					black
				},
				g1/.style={
					red,thick
				},
				g2/.style={
					blue,thick
				},
				snake it/.style={
					decorate, decoration=snake},
				arrowmark/.style 2 args={
					decoration={markings,mark=at position #1 with \arrow{#2}}},
				iso/.style={
					isocol,
					postaction={decorate},
					arrowmark={.5}{>},
					arrowmark={.9}{>},
					arrowmark={.15}{>}
				},
				isonode/.style={
					inner sep=.8*\circsize,
					circle, 
					fill,
					isocol
				},
				gnode1/.style={
					inner sep=.6*\circsize,
					circle, 
					fill,
					g1
				},
				gnode2/.style={
					inner sep=.6*\circsize,
					circle, 
					fill,
					g2
				}
			}
			\draw [sl] plot [smooth,tension=.7] coordinates {(12,2) (10,2.6) (6,4) (0,2) (0,-2) (6,-4) (12,-2) };
			\draw [sl] plot [smooth,tension=.6] coordinates {(3,2) (4, 1.5) (6, 1) (8, 1.5) (9, 2) };
			\draw [sl] plot [smooth,tension=.6] coordinates {(4, 1.5) (6, 2) (8, 1.5)};
			\foreach \i in {1,2}
			{\node[gnode\i] 
				(A\i) at   ( $(1,-1)   +(4*\i-4,-1.5*\i+ 1.5 ) $){};
				\node[gnode\i] 
				(B\i) at    ( $(3,0)    +(4*\i-4,-1.5*\i+ 1.5 ) $){};
				\node[gnode\i] 
				(C\i) at    ( $(4,1)    +(4*\i-4,-1.5*\i+ 1.5 ) $){};
				\node[gnode\i] 
				(D\i) at    ( $(4.5,-.8) +(4*\i-4,-1.5*\i+ 1.5 ) $){};
				\foreach \x/\y in {A\i/B\i,B\i/C\i,B\i/D\i}
				\draw[g\i] (\x) to
				[out=10, in=170, relative] coordinate[pos=.5](\x\y) (\y);
			}
			\draw[iso] (A1B1) 
			to [out=10, in=170, relative] 
			node[anchor=north]{$\alpha_e$} (A2B2);
			\draw[iso] (B1C1) to [out=10, in=170, relative] 
			node[anchor=south]{$\alpha_f$} (B2C2);
			\node[isonode, label={[isocol, anchor=90]below: $p_e$ }] at (A1B1){};
			\node[isonode, label={[isocol, anchor=130] $p'_{e}$ }] at (A2B2){};
			\node[isonode, label={[isocol, anchor=300] $p_f$ }] at (B1C1){};
			\node[isonode, label={[isocol, anchor=150]below: $p'_{f}$ }] at (B2C2){};
			
			\draw[iso] ( $(A1B1) + (-.05,.05)$ ) 
			to [out=10, in=170, relative] ( $(B1) + (-.05,.05)$ );
			\draw[iso] ( $(B1) + (-.05,.05)$ )
			to [out=10, in=170, relative]( $(B1C1) + (-.05,.05)$ );
			\node[anchor=330,isocol] at (B1) {$\Phi_\G(\gamma)$};
			\draw[iso] ( $(A2B2) + (-.05,.05)$ ) 
			to [out=10, in=170, relative] ( $(B2) + (-.05,.05)$ );
			\draw[iso] ( $(B2) + (-.05,.05)$ )
			to [out=10, in=170, relative]( $(B2C2) + (-.05,.05)$ );
			\node[anchor=330,isocol] at (B2) {$\Phi_\G'(\gamma)$};
		\end{tikzpicture}
		\caption{}
	\end{figure}
	
	At the common vertex of $e$ and $f$, they meet with a third edge $g$. The paths joining $p_e$ and $p_f$ to $p_g$ also deform to paths joining $p'_{e}$ a $p'_{f}$ to $p'_g$. For both deformations, $p_g$ travels via $\alpha_g$. Note that the 1-dimensional cycle $p_e \to p_f \to p_g \to p_e$ of shortest paths on the image of the graph is homologically trivial; so is the cycle $p'_e \to p'_f \to p'_g \to p'_e$. Hence, these two `triangles' bound disks. Also, they are isotopic via the image of a triangular prism, whose edges are mapped via $\alpha_e, \alpha_f$ and $\alpha_g$. If we close the top and bottom of this prism with the aforementioned triangles, we obtain the boundary of a solid triangular prism, which is topologically a sphere. Since the second homotopy group of the surface $S$ is trivial, the image of this boundary is contractible, thus the map can be extended to that of a solid triangular prism. If we restrict this map to the subset which is comprised of the unions of line segments joining the barycenter of each sectional triangle to its corners, we obtain the deformation of the piece of $\Phi_\G$ bounded by $p_e, p_f$ and $p_g$ to the relevant piece of $\Phi'_\G$.
	
	\begin{figure}
		\centering
		\begin{tikzpicture}[scale=.6]
			\tikzset{
				nx/.style={
					inner sep=2,
					circle, 
					fill,green!50!red
				},
				nxl/.style={
					thick,
					green!50!red
				}
			}
			\node[nx, label={[anchor=north east] $\alpha_e(t)$ }] 
			(A) at (0,0){};
			\node[nx, label={[anchor=north west] $\alpha_f(t)$ }] 
			(B) at (6,0){};
			\node[nx, label={[anchor=south] $\alpha_g(t)$ }] 
			(C) at (3,5){};
			\draw (A) 
			-- (B) 
			-- (C) 
			-- (A);
			\node[nx, label={[anchor=north]below: $v(t)$ }] 
			(center) at (\bctr{A}{B}{C}){};
			\draw[nxl] (A) -- (center);
			\draw[nxl] (B) -- (center);
			\draw[nxl] (C) -- (center);
			;		\end{tikzpicture}
		\caption{A section of the isotopy prism, at moment $t$. The vertices are mapped to $\alpha_e(t),\alpha_f(t),\alpha_g(t)$ and the edges are mapped to the deformation given by Equation \ref{eq:deformation}. The isotopy on around the vertex $v$ is given by the line segments joining the center $v(t)$ to the vertices.}
	\end{figure}
	
	We can repeat this construction around each (3-valent) vertex of the graph. Since the deformations obtained agree on the intersections (i.e. $p_e$ for each vertex $e$), they can be combined into a single deformation of $\G$. Hence, these two embeddings are isotopic and thus they represent the same element in $\mcg_\ell$. 
	
	Now, consider two sequences of flips describing the same morphism $|\G,\ell| \to |\G',\ell|$ in $\pimcg(S)$ and describing morphisms $\mu_1$ and $\mu_2$ in $\mcg_\ell(S)$, respectively. Then, $\mu_1\mu_2^{-1} \in \mathrm{Isot}({\mcg_\ell},|\G^* ,\ell|)$ is the identity, hence $\mu_1=\mu_2$. We conclude that the procedure is  well-defined. Since any morphism in $\pimcg(S)$ is described by a sequence of flips and such a sequence also gives a morphism in $\mcg_\ell(S)$, the functor is surjective between sets of morphisms.
\end{proof}

In order to investigate the kernel of this functor, we first describe a certain kind of automorphism on an arbitrary groupoid. Let $\mathbf X$ be a groupoid, let $a,b \in \obj(\mathbf X)$ and let $\gamma \in \mor_\mathbf{X}(a,b)$. We define the functor $\Xi_{\gamma}\in\aut(\mathbf X)$ as follows: On $\obj(\mathbf{X})$ it only exchanges $a$ and $b$, and leaves other objects as is. For a morphism $\alpha \in \mor_\mathbf{X}(x,y)$, the functor operates as follows:
$$
\Xi_\gamma(\alpha)= 
	\gamma^{-\epsilon(a)}\cdot  \alpha \cdot \gamma^{\epsilon(b)} 
$$
where $\epsilon$ is the function (depending on $\gamma$)
$$
\epsilon(x)=
\begin{cases}
+1 & x=a\\
-1 & x=b \\
0 & \mbox{otherwise}.
\end{cases}
$$
Concisely, this automorphism exchanges the objects $a$ and $b$, and extends any morphism which begins or ends at either of them using $\gamma$ to their new position. In case $a=b$, this yields the functor  
$$
\Xi_\gamma(\alpha)= 
\begin{cases}
	\gamma\cdot \alpha & \alpha\in \mathrm{Mor}(a,x) \quad (x\neq a)\\ 
	\alpha \cdot \gamma^{-1} & \alpha\in \mathrm{Mor}(x,a) \quad (x\neq a)\\ 
	\gamma\cdot  \alpha\cdot \gamma^{-1}& \alpha\in \mathrm{Mor}(a,a)\\
	\alpha & \mathrm{otherwise.}
\end{cases}
$$
In this case, the automorphism $\Xi_\gamma$ is ``{inner}'' in the sense that 
the induced outer automorphism on isotropy groups of $\mathbf X$ is trivial.
(If $\mathbf X$ is the fundamental groupoid of some topological space, then $\Xi_\gamma$ wildly violates the topology since it is not induced by a homeomorphism of the space in question, in general.)

In this paragraph, for simplicity $e$ and $f$ denote full-edges and not half edges. Consider a labeled graph embedding $\G \hookrightarrow S$ and two adjacent edges $e$ and $f$ of $\G$. By the pentagon relation described in Figure~\ref{fig12:pentagram}, we see that we can transpose these two  edges by five consecutive flips. We can then apply relabeling switching the labels of $e$ and $f$, keeping the other labels as is. This operation gives back the same labeled graph. Since the operation is confined to a simply-connected region, it is topologically trivial and hence gives the identity morphism at $|\G^*,\ell|$ in $\mcg_\ell(S)$, i.e. the identity element of the group $\mathrm{Isot}({\mcg_\ell},|\G^* ,\ell|)$. Nevertheless, we see that the same sequence of flips induce the nontrivial automorphism $\Xi_{\gamma}\in \aut(\widetilde{\Pi}_1^{\G^*})$ where $\gamma=(e,f)$ denotes the short path  from $e$ to $f$.  Consequently, the kernel of $
\mathrm{Isot}({\pimcg}, |\G^*, \ell|) \to
		\mod(S)$ is non-trivial in general.

By Theorem~\ref{thm:surjection}, we have a surjection 
$\mathrm{Isot}({\pimcg}, |\G^*,\ell|)\to\mod(S)$. 
It is known that $\mod(S)<\out(\pi_1(S))$ is the group of outer automorphisms which preserves the loops around the punctures. On the other hand, for any groupoid $\mathbf X$, there is a surjection
	$
	\aut(\mathbf{X})\to \out(\mathrm{Isot}(\mathbf{X},x))
	$
	for any object $x$ of $\mathbf X$. Whence
	\begin{equation}\label{miracle}
		\mathrm{Isot}({\pimcg}, |\G^*,\ell|)\to
		{\textcolor{green}
			{\aut(\widetilde{\Pi}_1^{\G^*})}}\to
		\out(\mathrm{Isot}({\widetilde{\Pi}_1^{\G^*}}, x))
		=
		\out(\pi_1(\G^* ,x))
	\end{equation}

\begin{remark}
The arrow on the right is obtained as follows: Let $\Xi: \mathbf X \to \mathbf X$ be a grupoid automorphism.
Let $\gamma\in \mathrm{Isot}(\mathbf X,x)$ for some object $x$.
Then $\Xi(\gamma) \in \mathrm{Isot}(\mathbf X,\Xi(x))$. Given a morphism $\beta\in \mathrm{Mor}(x,\Xi(x))$, we obtain $\beta\cdot \Xi(\gamma)\cdot \beta^{-1}\in \mathrm{Isot}(\mathbf X,x)
$. Hence we obtain an element of $\aut(\mathrm{Isot}(\mathbf X,x))$
\begin{align*}
	\Xi_\beta:  \mathrm{Isot}(\mathbf X,x) &\to \mathrm{Isot}(\mathbf X,x),\\
	\gamma &\mapsto \beta\cdot \Xi(\gamma)\cdot \beta^{-1} 
\end{align*}
If $\beta'$ is any other morphism in $\mathrm{Mor}(x,\Xi(x))$, then $\Xi_\beta$ and $\Xi_{\beta'}$ are conjugates. Hence we have a well-defined map $\aut( \mathbf X) \to \out(\mathrm{Isot}(\mathbf X,x))$.
\end{remark}

	\subsection{Punctures}\label{pnctrs}
	Recall that spines $\G \hookrightarrow S$ are endowed with a ribbon graph structure induced from the orientation of $S$.
	This induces a similar structure on $\G$, $\G^*$ and on $\widetilde{\Pi}_1^{\mathcal \G^*}$. Indeed, we define a (finite) {\it puncture} to be the homotopy class of a left-turn  path in 
	$\widetilde{\Pi}_1^{\mathcal \G^*}$, which starts and ends at the same object and has no self-intersections. Two overlapping punctures are considered to be identical. 
	
	An {\it infinite puncture} is an infinite nested sequence of homotopy classes of left-turning paths with non-repeating start and end points in $\widetilde{\Pi}_1^{\mathcal \G^*}$.

	Denote by ${\G}^\circ$ the set of punctures of $\G^*$. Hence to any modular graph we can associate two invariants $(g,n)$ where $g$ is its genus and $n$ is the cardinality of ${\G}^\circ$, the set of punctures (which may be finite or infinite). Note that $\G^*$ may have punctures even though $\G^*$ is simply connected, e.g. consider $\G=\F$, the Farey tree  (see Section 5.1). It has infinitely many punctures at the boundary.
	
	Let $\aut^\circ(\widetilde{\Pi}_1^{\G^*})$ be the group of groupoid isomorphisms preserving the set of finite punctures. Since a finite puncture is an element of $\widetilde{\Pi}_1^{\G^*}$, this definition makes sense.
		
		A quick experiment shows that flips preserve punctures, i.e. the set $\G_1^\circ$ is sent to the set
		$\G_2^\circ$ under a flip. Since the same is obviously true for permutations of labels, morphisms of $\pimcg$ preserve this extra structure,
		i.e. we may revise  (\ref{miracle}) as
		\begin{equation}\label{miracle2}
			\pmg(S)=\mathrm{Isot}({\pimcg}, |\G^*,\ell|)\to
			{\textcolor{green}
				{\aut^\circ(\widetilde{\Pi}_1^{\G^*})}}\to
			\out^\circ(\pi_1(\G^* ,x))
		\end{equation}
		where 
		$\out^\circ(\pi_1(\G^*,x))$ is the group of 
		automorphisms of the fundamental group
		preserving the conjugacy classes of punctures; modulo inner automorphisms. Since for $n,g<\infty$ the group $\mod(S)$ is known to be isomorphic to the group of puncture-preserving outer automorphisms of $\pi_1(S)$, the composition of the morphisms in~(\ref{miracle2}) defines an epimorphism
		\begin{equation}\label{modtoout2}
			\pmg(S)\to \out^\circ(\pi_1(\G^*,x)),
		\end{equation}
		for each pair $|\G^*, \ell|$  and for each edge-point $x$ of $\G^*$.

		\section{Full fundamental modular groupoid}
		In the previous section, under the hypothesis that $S$ is a surface of finite type (i.e. $g,n<\infty$), we have introduced the fundamental modular groupoid $\pimcg(S)$ of $S$, which is isomorphic to $\mcg_\ell(S)$. On the other hand, Definition~\ref{fmg} can be restated solely in terms of the modular graph $\G^*$, without referring to the surface $S$ or its type, as follows:
		\begin{definition}
			{\rm The } fundamental modular groupoid {\rm is the disconnected groupoid $\pimcg$ whose 
				\label{fmgr} object set is 
				${\mathrm{MGR}_\ell}$, which is defined to be set of all labeled modular graphs up to isomorphism. 
				Morphisms of $\pimcg$ are (exactly as in (\ref{grrou})  ):
				\begin{equation}\label{grrou1}
					\mor_{\pimcg}(|\G_1^*,{\ell_1}|,|\G_2^*,{\ell_2}|):=
					\left\{ 
					\begin{array}{l} 
						\mbox{isomorphisms } \widetilde{\Pi}_1^{\G_1^*}\to \widetilde{\Pi}_1^{\G_2^*} \mbox{ induced  }\\
						\mbox{by finite sequences of flips \&}\\
						\mbox{permutations of labels}
					\end{array}
					\right\}
			\end{equation}}
		\end{definition}


			Here, $|\G_1^*, \ell_1|$ and $|\G_2^*, \ell_2|$ are  in ${\mathrm{MGR}_\ell}$, these are labeled modular graphs with at most countably many edges.
			Since $\mathrm{MGR}_\ell$ is an uncountable set, $\pimcg$ has uncountably many objects. Since finite sequences of flips are at most countable in number,  $\pimcg$ has uncountably many connected components. If $|\G^*,\ell|\in \mathrm{MGR}_\ell $ then by $\pimcg(\G^*,{\ell})$ we denote the connected component of $\pimcg$ containing 
			$|\G^*,\ell|$. 
			Thanks to relabelings, this connected component is same as $\pimcg(\G^*,\ell')$, where $\ell'$ is any other labeling, and we will unambiguously denote  both as $\pimcg(\G^*)$. Finally, if $S$ is a surface of finite type, then by $\pimcg(S)$ we denote $\pimcg(\G^*)$ where $\G\hookrightarrow S$ is any trivalent spine of $S$. This notation is compatible with our previous definition of $\pimcg(S)$.
			
			\begin{figure}[h!]
				\begin{center}
					\includegraphics[width=2.5cm]{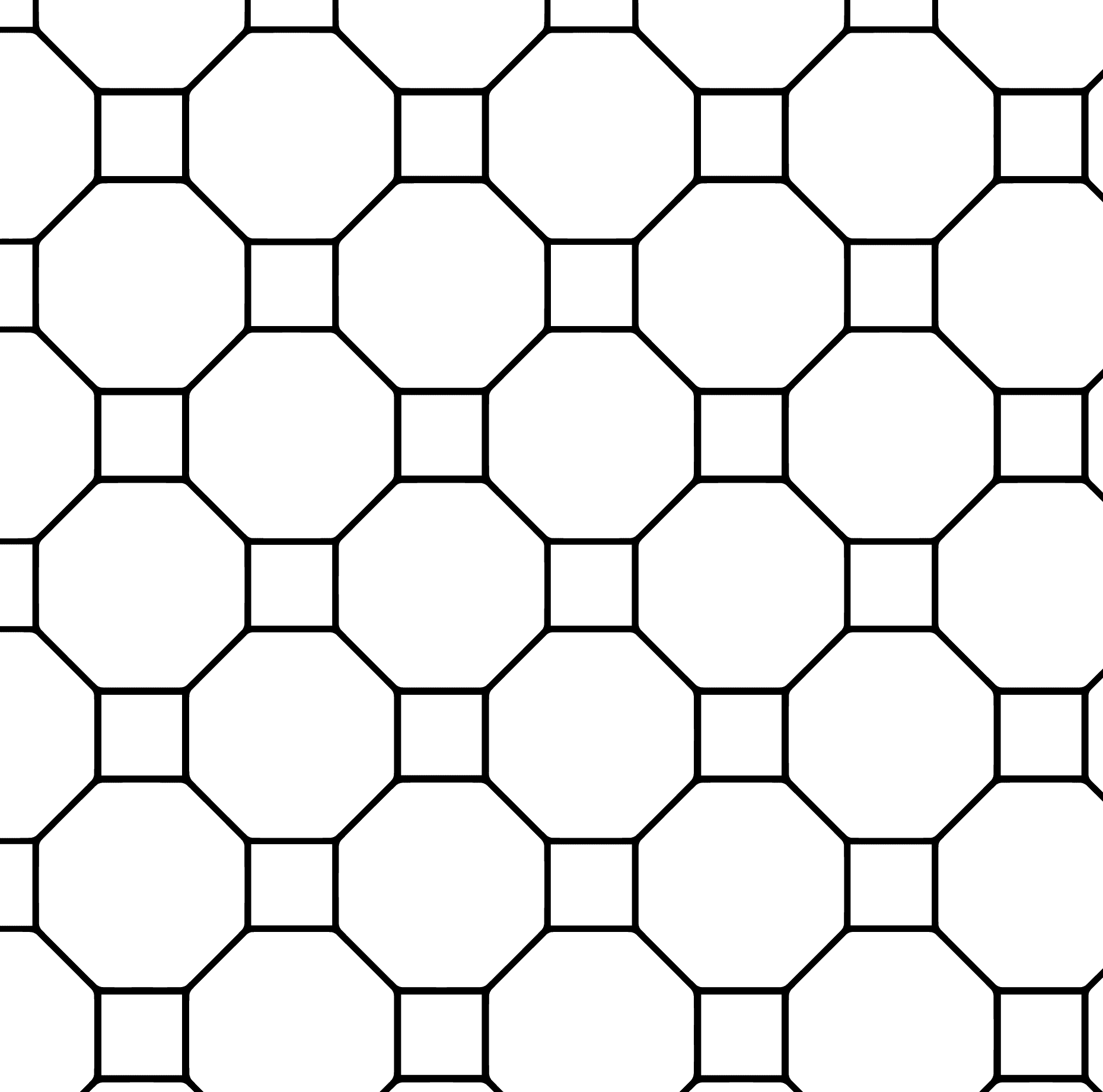}\qquad
					\includegraphics[width=2.5cm]{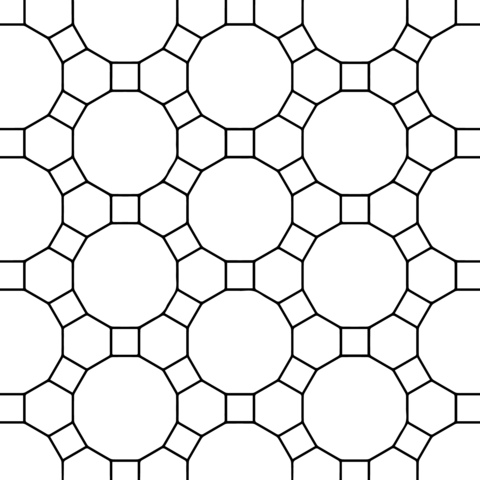}\qquad
					\includegraphics[width=2.9cm]{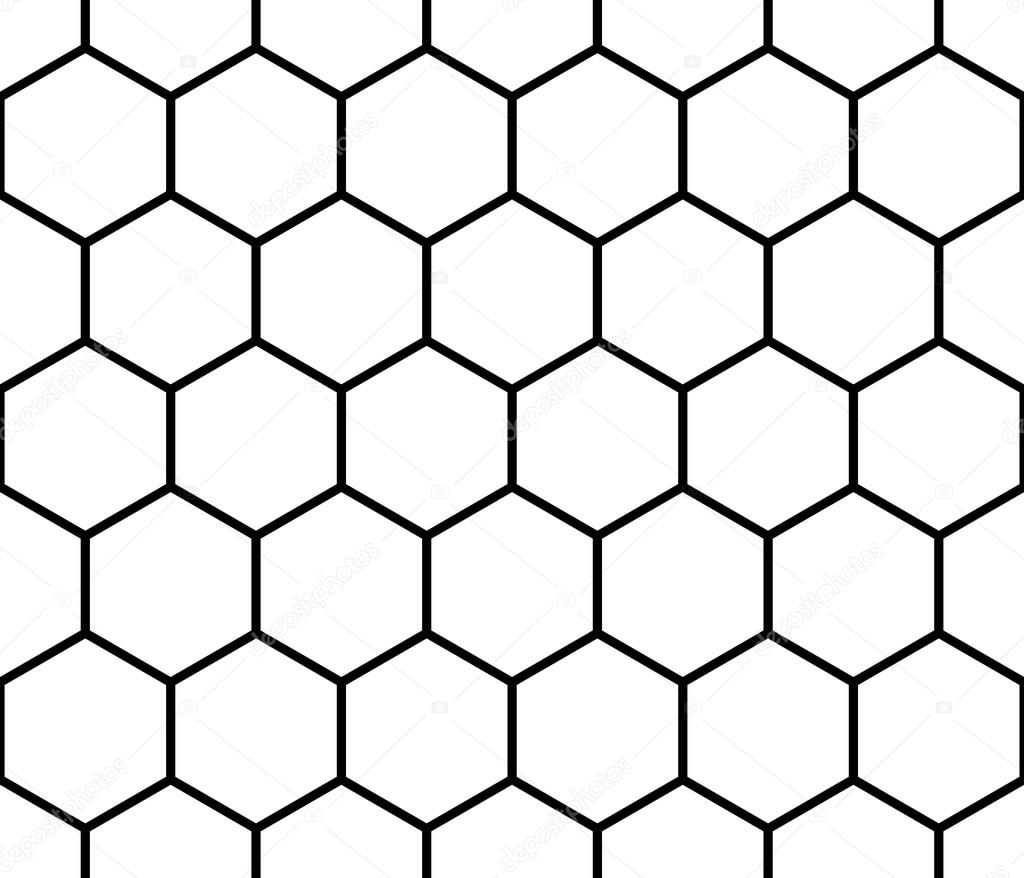}
				\end{center}
				\caption{Modular graphs in different flip orbits (connected components of $\pimcg$) with $g=0$, $n=\infty$. In these examples, there are no punctures at infinity, and there are infinitely many modular graphs in the flip-orbit. Farey tree (Figure~\ref{fareytree}) has no finite punctures, has infinitely many punctures at infinity, and is the only modular graph in its flip orbit.}
				\label{cccp}
			\end{figure}
			
			Since the isotropy group $\pmg(S)$ of $\pimcg(S)$ surjects onto $\mod(S)$, the isotropy groups of connected components of $\pimcg$ gives analogues of -extended- mapping class groups; there exists an uncountable number of them. Naturally, these will not be finitely generated nor presented in general. Figure~\ref{cccp} exhibits graphs from three different connected components.			Note that the mapping class group of the ambient surface $S^{g=0}_{n=\infty}$ is an uncountable group for such surfaces.
			
			Since $\pimcg(\G^* )$ is a connected groupoid, its isotropy groups are (non-canonically) isomorphic. This justifies the following the notation $\pmg(\G^* )$ for the isotropy group of $\pimcg(\G^* )$, which we shall adopt from this point on.
			
			Note that we still have the sequence~(\ref{miracle2}) (this time for graphs instead of surfaces):
			\begin{equation*}
				\pmg(\G^*){:=}\mathrm{Isot}(\pimcg, |\G^*,\ell |)\to
				{\textcolor{green}
					{\aut^\circ(\widetilde{\Pi}_1^{\G^*})}}\to
				\out^\circ(\pi_1(\G^*,x)).
			\end{equation*}

			\paragraph{Question.} Given some graph $\G^*$ as in Figure~\ref{cccp}, find a presentation of the  group $\mod(\G^* )$.

			\section{Farey tree and Thompson's group}
			Recall that, in case $\G^*$ is a finite modular graph, 
			the groupoid $\pimcg(\G^*)$ (i.e. the connected component of $\pimcg$ containing $\G$) admits the usual mapping class group $\mod(S)$ 
			as a quotient of its isotropy group $\pmg(S)$, where $\G$ is a spine of $S$. This section is devoted to exemplifying connected components of $\pimcg$, containing infinite modular graphs as their objects.
			
			Fundamental groups of the graphs in Figure~\ref{cccp} are not finitely generated. It is more interesting to start with infinite graphs of finite topology (i.e. with finitely generated fundamental groups). The simplest such graph is the Farey tree $\F$, which is the plane trivalent tree with no terminal vertices. In general, these are finite graphs with a bunch of infinite tree branches attached to them.
			
			\subsection{Farey tree and modular graphs}\label{fareytreee}
			We repeat some definitions from~\cite{UZD}. 
			Let  $\psl$ be the modular group. It is well-known that
			$\psl = \langle S\rangle \ast \langle L\rangle \cong\Z/2\Z \ast \Z / 3\Z,$ where $S (z)=-1/z$ and $L (z)= (z-1)/z$, where we view $S$ and $L$ as acting on the complex 
			upper half plane $\mathcal H$.
			
			From $\psl$ we construct the (bipartite) Farey tree $\F$ via
			\begin{eqnarray*}
				\mbox{edges of }\F: \quad E  (\F) & = & \{\{W\} \colon W \in \psl \} \\
				\mbox{vertices of }\F:\quad V  (\F) & = & V_{{\tens}}  (\F) \sqcup V_{\oasterisk}  (\F);
			\end{eqnarray*}
			\noindent where 
			\begin{eqnarray*}
				V_{\tens}  (\F) & = & \{ \{W,WS\} \colon W \in \psl \}, \\
				V_{\oasterisk}  (\F) & = & \{ \{W,WL,WL^{2}\} \colon W \in \psl \}.
			\end{eqnarray*}
			The edges incident to the vertex $\{W,WS\} \in V_{\tens}$ are $ \{W\}$ and $\{WS\} $.
			The edges incident to the vertex $\{W,WL,WL^{2}\} \in V_{\oasterisk}$ are $ \{W\}$, $\{WL\}$, and $\{WL^{2}\} $, and these edges inherit a natural cyclic ordering from the vertex. Thus the Farey tree $\F$ is an infinite bipartite ribbon graph, and $\psl$ acts on $\F$ from the left, by ribbon graph automorphisms, where $M\in \psl$ sends
			the edge $\{W\}$ to $ \{MW\}$.

			Note that the automorphism group $\aut(\F)$ is the free product $\Z/2\Z*\Z/3\Z\simeq \psl$, generated by an order-2 rotation around an edge and an order-3 rotation around a vertex adjacent to this edge.

			Let $\Gamma$ be any subgroup of $\psl$. Then $\Gamma$ acts on $\F$ from the left and to $\Gamma$ we associate a quotient graph $\Gamma\backslash\F$ as follows:
			
			\medskip\noindent
			\hspace{1.5cm}	 $E  (\Gamma\backslash\F) = \{\Gamma \ccdot \{W\} \colon W \in \psl \}$
			
			\noindent
			\hspace{1.5cm}	 $V  (\Gamma\backslash\F) = V_{\tens}  (\F/\Gamma) \cup V_{\oasterisk}  (\F/\Gamma)$;
			
			\noindent where 
			
			\noindent
			\hspace{1.5cm}	$V_{\tens}  (\Gamma\backslash\F) = \{ \Gamma \ccdot\{W , WS\} \colon W \in \psl \}$, and
			
			\noindent
			\hspace{1.5cm}	$V_{\oasterisk}  (\Gamma\backslash\F) = \{ \Gamma \ccdot\{W, WL, WL^{2}\} \colon W \in \psl \}$.

			\medskip\noindent The incidence relation induced from the Farey tree gives a well-defined incidence relation and gives us the  bipartite graph.
			If the group $\Gamma$ is torsion-free, then
			it is easy to see that $\Gamma\backslash\F$ has no pending edges. Hence, $\Gamma\backslash\F$
			is a {modular graph}. Conversely, any connected bipartite ribbon graph $G$, with $V (G)=V_{\tens}  (G) \sqcup V_{\oasterisk}  (G)$, such that every $\tens$-vertex is of degree 2 and every $\oasterisk$-vertex is of degree 3, is modular since the universal covering of $G$ is isomorphic to $\F$.

			The set of edges of $\Gamma\backslash\F$ is identified with the set of right-cosets of $\Gamma$, so that the graph $\Gamma\backslash\F$ has $[\psl : \Gamma]$ many edges. In case $\Gamma$ is a finite index subgroup, the graph $\Gamma\backslash\F$ is a finite modular graph.
			
			It takes a little effort to define the fundamental group of $\Gamma\backslash\F$ so that there is a canonical isomorphism $\pi_1 (\Gamma\backslash\F, \Gamma\ccdot \{I\})\simeq \Gamma<\psl$, with the canonical choice of $\Gamma\ccdot \{I\}$ as a base edge. In general, subgroups $\Gamma$ of the modular group  (or equivalently the fundamental groups $\pi_1 (\Gamma\backslash\F)$) are free products of copies of $\Z$, $\Z/2\Z$ and $\Z/3\Z$, see \cite{kulkarni}.  Note that two distinct isomorphic subgroups $\Gamma_1$, $\Gamma_2$ of the modular group may give rise to non-isomorphic ribbon graphs $\Gamma_1\backslash\F$ and $\Gamma_2\backslash\F$. In other words, the fundamental group does not characterize the graph. Another basic invariant of $\Gamma\backslash\F$ is its genus, which is defined to be the genus of the surface constructed by gluing discs along left-turn paths.  This genus is the same as the genus of the Riemann surface $\Gamma \backslash{\mathcal H} $,
			where $\mathcal H$ is the upper half plane. Finally, the number of punctures  is an invariant of $\Gamma\backslash\F$.

			\begin{proposition}
				If $\Gamma_1$ and $\Gamma_2$ are conjugate subgroups of $\psl$, then the graphs $\Gamma_1\backslash\F$ and $\Gamma_2\backslash\F$ are isomorphic as ribbon graphs. Hence there is a 1-1 correspondence between modular graphs and conjugacy classes of (torsion free) subgroups of the modular group. 
				\label{conjugatesubgroups}
			\end{proposition}

			\begin{theorem} \label{graphswithadoe}
				There is a 1-1 correspondence between modular graphs with a base edge $(\mathcal G,e)$  (modulo ribbon graph isomorphisms of pairs $ (\mathcal G,e)$) and subgroups of the modular group). 
				\label{thm:modular/graph/vs/subgroups}
			\end{theorem}
The $\psl$-action on the set of subgroups corresponds to the 
base-edge-moving action of $\psl$ on the set $\overrightarrow{\mathrm{MGR}}$ of modular graphs with a base edge.
			\begin{theorem} \label{cosetsss}
				{There is a 1-1 correspondence between modular graphs with two base edges $ (\mathcal G,e,e')$  (modulo ribbon graph isomorphisms of triples $ (\mathcal G,e,e')$) and cosets of subgroups of the modular group. 
					\label{thm:modular/graph/vs/cosets}}
			\end{theorem}

To get a description of the labeled modular graphs up to isomorphism, let $(\G, \ell)$ be a such graph. For simplicity, we assume that the labels are from the set $\{1,2,\dots |E(\G)|\}$ (and not from the set $\{1,2,\dots |E(\G)|/2\}\times
\{+,-\}$ as we used above).
We construct a pair of elements $(\sigma, \lambda)$ of the permutation group 
$\mathsf{Sym}(|E(\G)|)$ as follows.
\begin{itemize}
\item $\sigma$ is the product of all label transpositions 
$(i,j)$ such that the corresponding edges of $\G$ meet at a vertex of degree 2;
\item $\lambda$ is the product of all 3-cycles of transpositions $(i,j,k)$ of labels such that the corresponding edges of $\G$ meet at a vertex of degree 3, with this orientation.
\end{itemize}
One has then $\sigma^2=\lambda^3=1$. Note that the permutation group $\langle \sigma, \lambda \rangle<\mathsf{Sym}(|E(\G)|)$ is transitive since $\G$ is connected. 

Conversely, given $N$ let us call a {\it 2-3 permutation pair} a pair $(\sigma, \lambda)$ of elements of 
$\mathsf{Sym}(N)$ with $\sigma^2=\lambda^3=1$ 
and such that $\langle \sigma, \lambda \rangle$ is transitive. 
Note that any 2-3 permutation pair defines a transitive permutation representation 
$\psl\to\mathsf{Sym}(N)$ sending $S\to \sigma$ and $L \to\lambda$. The group $\psl$ acts on such representations by conjugation.

Given a 2-3 permutation pair, we can construct a labeled modular graph 
$(\G, \ell)$ by taking a collection of edges labeled $1,2,\dots N$; gluing the edges labeled $i$ and $j$ at a degree-2 vertex whenever $(i,j)$ is a transposition in $\sigma$; and finally gluing the edges labeled $i$, $j$ and $k$ at a degree-3 vertex with orientation $i\to j\to k$ whenever $(i,j,k)$ is a 3-cycle in $\lambda$ (pending edges of $\G$ or equivalently the torsion elements of a corresponding subgroup are avoided by asking $\sigma$ and $\lambda$ to be free of 1-cycles\footnote{Much of the ideas in this paper carry over to modular graphs with pending edges; equivalently to subgroups of $\psl$ with torsion; equivalently to 2-3 permutation pairs with fixed points. These correspond to surfaces $S$ with punctures and orbifold points with inertia $\Z/2\Z$ and $\Z/3\Z$.}). The graph $\G$ so obtained with its labeling is then connected since $\langle \sigma, \lambda \rangle$ is transitive.
Observe that the cycle decomposition of the product $\sigma\lambda$ then corresponds to the puncture set of $\G$  (see \cite{millington}).

Hence the following characterization of our basic object set $\mathrm{MGR}_\ell$.

\begin{theorem}
There is a 1-1 correspondence between 
the elements of $\mathrm{MGR}_\ell$ with $N$ edges 
($1\leq N \leq \infty$) and the set of pairs of fixed point free 2-3 permutation pairs; i.e. permutations 
$(\sigma, \lambda)$ from
$\mathsf{Sym}(N)$ with $\sigma^2=\lambda^3=1$ 
and such that $\langle \sigma, \lambda \rangle$ is transitive.
\end{theorem}

By Theorem \ref{graphswithadoe}, a subgroup $\mathsf G<\psl$ is specified by the choice of an edge of $\G$; which we universally pick to be the one labeled 1. By Theorem \ref{cosetsss}, all other edges of $\G$ corresponds to the cosets of $\mathsf G$. This shows that an element of 
$\mathrm{MGR}_\ell$ is nothing but a subgroup $\mathsf G<\psl$ with an enumeration of cosets of $\mathsf G$ in $\psl$, such that the coset $\mathsf G$ itself is labeled 1. Elements of 
$\widetilde{\Pi}_1^{\G}$ can then be interpreted as certain sequences of enumerated cosets of $\mathsf G$ (of arbitrary length); encoding paths on the corresponding graph.
As a side remark, note that $\psl$ acts on such enumerated cosets (i.e. on $\mathrm{MGR}_\ell$) on by conjugation from the left and from the right, though these won't extend to an action on paths.

In this description, a labeled flip $\psi: (\G, \ell)\to(\G', \ell')$ corresponding to the 2-3 permutation pair $(\sigma, \lambda)$ is specified by the choice of a transposition $(c,d)$ of $\sigma$. 
This flip then erases a pair of 3-cycles $(a,b,c)$ and $(d,e,f)$ from $\lambda$ and inserts $(a,d,f)$ and $(c,b,e)$ to give a new permutation $\lambda'$ of order 3. The final 2-3 permutation is then $(\sigma, \lambda')$. One can check that if $(\sigma, \lambda)$ is transitive then so is  $(\sigma, \lambda')$. There is a similar interpretation of the corresponding isomorphism 
$\psi^*:\widetilde{\Pi}_1^{\G}\to \widetilde{\Pi}_1^{\G'}$ in terms of coset sequences.

			\tikzset{
				common/.style={draw,name=#1,node contents={}, inner sep=0, minimum size=1.5},
				disc/.style={circle, fill=red, radius=.1mm, common=#1},
				square/.style={rectangle,common={#1}},
			}
			\begin{figure}
				\begin{center}
					\begin{tikzpicture}[scale=.6]
						\draw (0,0) node[disc=c-0-1];
						\xdef\radius{0cm}
						\xdef\level{0}
						\xdef\nbnodes{1}
						\xdef\degree{(3+1)} 
						\foreach \ndegree/\form in {3/disc,3/disc,3/disc,3/disc,3/disc,3/disc,3/disc}{
							\pgfmathsetmacro\nlevel{int(\level+1)}
							\pgfmathsetmacro\nnbnodes{int(\nbnodes*(\degree-1))}
							\pgfmathsetmacro\nradius{\radius+1cm}
							\foreach \div in {1,...,\nnbnodes} {
								\pgfmathtruncatemacro\src{((\div+\degree-2)/(\degree-1))}
								\path (c-0-1) ++({\div*(360/\nnbnodes)-180/\nnbnodes}:\nradius pt) node[\form=c-\nlevel-\div];
								\draw (c-\level-\src) -- (c-\nlevel-\div);
							}
							\xdef\radius{\nradius}
							\xdef\level{\nlevel}
							\xdef\nbnodes{\nnbnodes}
							\xdef\degree{\ndegree}
						}
				\end{tikzpicture}\end{center}
				\caption{The 3-valent Farey tree. To obtain the 2-3-valent graph, one must put extra vertices at midpoints of its edges.}
				\label{fareytree}
			\end{figure}

			\subsection{The fundamental modular groupoid of the Farey tree}
			Recall that the groupoid $\pimcg(\F)$ is connected by definition. It has uncountably many objects, consisting of all labeled trees 
			$|\F, \ell|$, where $\ell:\F\to\mathbb N\times\{+,-\} $ is a labeling. In other words, the object set 
			of $\pimcg(\F)$ is
			$$
			\mathrm{Obj}(\pimcg(\F))=\{(\F, \ell)\, |\, \ell:E(\mathcal \F)  \to \mathbb N \mbox{ is a bijection}\}/\psl.
			$$
			Since among the morphisms of $\pimcg(\F)$, in particular we have all relabelings, 
			the set of morphisms of $\pimcg(\F)$ is uncountable, too\footnote{The situation is similar if we have an infinite modular graph $\G$ in place of $\F$.}.
			However, the situation is not so desperate as it seems; since the set of morphisms among any pair of  objects $\pimcg(\F)$ is countable by definition 
			(cf. ``{finite sequences of}'' in (\ref{grrou1})). In particular, $\pmg(\F)$, 
			i.e. the
			isotropy group of any of its objects is the same (up to isomorphism) countable group. Any element of $\pmg(\F)$ can be uniquely written in the form $\lambda\varphi $, 
			where $\varphi$ is the composition of a finite sequence of flips followed by a relabeling $\lambda$. 
			
			Let us now show that $\pmg(\F)$ is an extension of $\ppsl$ (which is isomorphic to the famed Thompson's group $\mathsf{PPL_2(Z)}$) with a group of permutations of finite support on the set $\psl$.
			
			\definecolor{green}{RGB}{0, 153, 51}

			The tree $\F$ being simply connected, 
			$\widetilde{\Pi}_1^{\mathcal F}$ has trivial isotropy, and the last element of the sequence in (\ref{miracle}) vanish, yielding for any labeling $\ell$ the inclusion map
			\begin{equation*}
				\pmg(\F):= 
				\mathrm{Isot}(\pimcg, |\mathcal F, \ell |)\to
				{\textcolor{green}
					{\aut(\widetilde{\Pi}_1^{\mathcal F}) }}.
			\end{equation*}

			Since $\F$ is simply connected, there exists a unique path connecting any pair of edges, and the 
			groupoid $\widetilde{\Pi}_1^{\mathcal F}$ is identified with the pair groupoid $E(\F)\times E(\F)$. Recall that by definition $E(\F)$ is identified with $\psl$. Hence, 
			${\textcolor{green}
				{\aut(\widetilde{\Pi}_1^{\mathcal F})}}\simeq \mathsf{Sym}(E(\F))= \mathsf{Sym}(\psl)$ is an uncountable group. Note that 
			$\aut^!(\F)<{\textcolor{green}
				{\aut(\widetilde{\Pi}_1^{\mathcal F})}}$, where  $\aut^!(\F)$ is the group of graph automorphisms not necessarily preserving the ribbon structure (this is a locally compact, uncountable group).
			To sum up this discussion, we have
			$$
			\pmg(\F)<\mathsf{Sym}(\psl) \simeq {\textcolor{green}
				{\aut(\widetilde{\Pi}_1^{\mathcal F})}}.
			$$
			
			Let now $|\F, \ell|$ be an object of $\pimcg(\F)$. 
			Let us first show that the group of ribbon graph automorphisms $\aut(\F)\simeq \psl$ is
			a subgroup of the isotropy group of $|\F, \ell|$.
			Let $\alpha\in \aut(\F)$. Then $(\F, \ell)$  and $(\F, \ell\circ \alpha)$ represent the same object of 
			${\mathrm{MGR}_\ell}$, identified via 
			$\alpha^{-1}:\F\to \F$. 
			The induced automorphism 
			$\widetilde{\Pi}_1^{\mathcal F}
			\to \widetilde{\Pi}_1^{\mathcal F}$ simply sends a path to its image under $\alpha^{-1}$.
			To sum up, we have
			$$
			\psl\simeq \aut(\F)<\pmg(\F).
			$$
			(The same argument shows that $\aut(\G)<\pmg(\G)$ for every modular graph $\G$.)
			Note that $\aut(\F)=\aut^!(\F)\cap \pmg(\F)$, because elements in $\aut^!(\F)\backslash \aut(\F)$ does not respect the ribbon graph structure, whereas flips, although not being graph automorphisms, respects the ribbon graph structure.
			
			\subsubsection{The group of partial automorphisms}
			Recall that $\pmg(\F)$ is the isotropy group of $|\F,\ell|$ in $\pimcg$ for some labeling $\ell$. From now on we fix one such labeling.
			
			Now consider the subgroup 
			$\mathsf{Sym}^\pm(\psl)<
			\mathsf{Sym}(\psl)$ which consists of 
			permutations $\pi$ of finite support,  such that for all $W\in \psl$ one has 
			$$\{\pi(W), \pi(WS)\}=\{W', W'S\}$$ for some $W'$ (of course, depending on $\pi$ and $W$). 
			This means that on the Farey tree, $\pi$ should send twin edges to twin edges, i.e. those edges which meet at a degree-2 vertex. 
			Note that 
			$\mathsf{Sym}^\pm(\psl)<\pmg(\F)$ since any of the two `{transposition}' of  two given twin pairs of edges can be written in terms of flips.

			Moreover,
			$$
			\mathsf{Sym}^\pm(\psl)\vartriangleleft \pmg(\F),
			$$
			since the conjugate of a finite permutation is again a finite permutation. Note that an element of $\mathsf{Sym}^\pm(\psl)$ may permute two edges incident at a degree-two vertex.

			A {\it ribbon graph near-automorphism} of $\F$
			is a bijection $\sigma: E(\F) \to E(\F)$ such that the restriction
			$\sigma: \F\backslash \mathcal T_1 \to\F\backslash \mathcal T_2 $ is a ribbon graph isomorphism for some finite subtrees $\mathcal T_1, \mathcal T_2$, where we require that $\sigma$ respects the canonical cyclic order of the branches $\F\backslash \mathcal T_1$ and $\F\backslash \mathcal T_2$ induced by the planar structure  (see~\cite{yves}). Thus, $\sigma$ can permute the edges of $\mathcal T_1, \mathcal T_2$ arbitrarily, violating the graph structure (but keeping twin edges together). 
			
			Consider the equivalence relation on the set of near automorphisms; where two such automorphisms are  equivalent if they agree on the complement of a finite subtree. 
			The group of {\it partial $\F$-automorphisms} $\aut^\infty(\F)$  is the equivalence class of near automorphisms under this equivalence \cite{vlad}. The equivalence class of an automorphism is called its {\it germ}. Note that $\aut^\infty(\F)$ is a countable group.
			
			\label{germsof}
			\begin{theorem}
				There is an exact sequence
				\begin{align}\label{symfary}
					1\to \mathsf{Sym}^\pm(\psl) \to \pmg(\F) 
					\stackrel{\gamma}{\to} \aut^\infty(\F) \to 1
				\end{align}
				where $\aut^\infty(\F)$ is the group of partial $\F$-automorphisms and $\gamma$ is the germification map.
			\end{theorem}

			\begin{proof}
				It suffices to prove that $\pmg(\F)$ is the group of near $\F$-automorphisms. To this end, it suffices to observe that every flip followed by the corresponding relabeling is nothing but a near automorphism. Since every transposition of edges can be written as a sequence flips, the kernel is the group 
				$\mathsf{Sym}^\pm(\psl)$.
			\end{proof}

			We may relax the definition of the partial automorphism and consider instead arbitrary automorphisms (not just ribbon graph automorphisms) $\sigma: \F\backslash \mathcal T_1 \to \F\backslash \mathcal T_2$, where $\mathcal T_1, \mathcal T_2$ are  two finite subtrees of $\F$. The resulting group is Neretin's spheromorphism group, which we denote by \label{nerretin}
			$\aut^\infty(\F^c)$ (where the subscript `$c$' stands for ``{combinatorial}''). Observe that $\aut^\infty(\F^c)$ is an uncountable group. Note that there is an obvious analogue of (\ref{symfary}) for regular planar trees.
			
			Denoting the corresponding group of 
			near-automorphisms (not just ribbon graph near-automorphisms) of $\F$ by $\pmg(\F^c)$, we get the analogous exact sequence \label{modfc}
			\begin{align}\label{symfary7}
				1\to \mathsf{Sym}^\pm(\psl) \to \pmg(\F^c) 
				\stackrel{\gamma}{\to} \aut^\infty(\F^c) \to 1.
			\end{align}
			(We don't claim that $\pmg(\F^c)$  is the isotropy group of a groupoid in a way $\pmg(\F)$ is an isotropy group.)
			The group $\aut^\infty(\F^c)$ has a discrete subgroup which is isomorphic to Thompson's group $\mathsf V$, see Section~\ref{outmodgr} below.
			
			\begin{remark}
				Instead of our tree $\F$, it is possible to consider the planar tree $\T$ such that every vertex is of degree 3; i.e. without vertices of degree 2. The group $\pmg(\T)$ of near automorphisms can be defined in the same way. However; 
				since elements of $\pmg(\F)$ are allowed to permute twins; i.e. two edges that meet at a vertex of degree $2$, the group $\pmg(\F)$ is much bigger than the group $\pmg(\T)$. Nevertheless, the groups $\aut^\infty(\F)$ and $\aut^\infty(\T)$ are isomorphic.
			\end{remark}
			
			\begin{remark}
				There is a direct analogue of (\ref{symfary7}) for a regular tree where each vertex is of degree $p$, which provides the extension of Neretin's $p-$adic spheromorphism group by an infinite symmetric group.
			\end{remark}
			
			\subsubsection{Thompson's group and $\pmg(\F)$}
			Now let $|\F, \ell|$ be an object of $\pimcg(\F)$.
			We use the edge labeled $e:=(1,+)$ to identify the boundary $\partial|\F, \ell|$ with the circle $S^1_e$ via continued fractions~\cite{modulargroupanditsactions}. 
			We then extend the $\pimcg(\F)$ action on 
			fundamental groupoids (which consists of finite paths) to the boundaries (which consists of infinite paths). This defines an epimorphism
			$$
			\partial: \pimcg(\F) \to \ppsl.
			$$
			Since the relabelings has no effect on the paths whatsoever, the kernel of $\partial$ consists of the relabeling morphisms 
			$(|\F, \ell|, |\F, \ell'|)\in \pimcg(\F)$ 
			(observe that the set of relabeling morphisms does not constitute a subgroup of $\pimcg$).
			
			As for the flips, let 
			$\varphi:|\F, \ell|\to |\F, \ell'|$ be one.
			It induces a map 
			$$
			\partial\varphi: \partial|\F, \ell|\to \partial|\F, \ell'|,
			$$
			and since both 
			$\partial|\F, \ell| $ and 
			$\partial|\F, \ell'| $ are identified with $S^1$, 
			$\varphi$ induces a map
			$$
			\partial\varphi: S^1\to S^1,
			$$
			which is piece-wise $\psl$. 
			Restricting $\partial$ to the subgroup 
			$\pmg(\F)$ for some $\ell$, we get the following result.

			\begin{lemma} (\cite{vlad})
				$\aut^\infty(\F)$ is isomorphic to $\ppsl$. 
			\end{lemma}
			\label{thmpgr}
			\begin{proof}
				Any element of $\psl$ can be described as a finite composition of flips on $\F$. First, let's consider a single flip. Let $\phi:E\to E$ denote the permutation determined by a flip. In Figure \ref{xcxcxxx}, we observe that $\phi$ maps the branches stemming from the edges $a,b,c$ and $d$ on the left isomorphically to the branches stemming from the edges $a',b',c'$ and $d'$, respectively, on the right. Hence, for this map, we can choose $\mathcal T$ in the definition of the related element of $\aut^\infty(\F)$ as the subgraph consisting of the edges $a,b,c,d,e$ and $f$. For a composition of flips, $\mathcal T$ can be chosen as the smallest subtree containing the inverse images of the subtrees described for a single flip, by the permutations of prior flips.
				
				Any element of $\ppsl$ can be described by a finite sequence of pairs of intervals in $S^1$. The map between any such pair is given by a Möbius transformation. For any pair of intervals $I, I'\subset S^1$, we can consider the branches $\B, \B'$ of $\F$ such that $\partial \B=I$ and $\partial \B' = I'$. For such a pair $(I,I')$ there is a unique ribbon graph isomorphism sending $\B$ to $\B'$. Then, we complete these isomorphisms into a permutation of $E\to E$ by gluing the ribbon graph isomorphisms. Notice that the complementary tree $\mathcal T$ is finite and thus the resulting element is in $\aut^\infty(\mathcal F)$. This procedure describes an inverse map to the one in the previous paragraph.
			\end{proof}
			
			Recall that  $\ppsl$  is isomorphic to Thompson's group $\mathsf{PPL_2(Z)}$ via the Minkowski question mark function. This was conjectured by Kontsevich and proved by Imbert \cite{imbert}.
			In fact the last arrow in (\ref{symfary}) depends on our identification of $\partial \F$ with the circle; the continued fraction identification makes appear the group $\ppsl$ and the dyadic identification makes the appear group $\mathsf{PPL_2(Z)}$ as the last term in (\ref{symfary}).

			\section{Bushes}
			We say that $\G$ is of {\it finite topology} if $\pi_1(\G)$ is finitely generated. An infinite modular graph of finite topology is called a  {\it bush}. The simplest bush is the Farey tree $\F$, which has a trivial fundamental group. It is easy to see that any bush has a well-defined genus $g$ and number of finite punctures $n$, which are both finite and flip-invariant.
			
			A {\it branch} is a cycle-free connected graph, which is modular except that it has a single vertex of degree 1, called its {\it root}, such that the vertex adjacent to the root is of degree 2. 
			
			Unless $\pi_1(\G)\simeq\Z$, every bush $\G$ defines an underlying finite modular graph 
						$u(\G)$.  The graph $u(\G)$ can be obtained by plucking the maximal branches from $\G$; 
{removing the vertex that is root of the branch together with the two neighboring half-edges and identifying the two hanging 2-valent vertices, as shown in Figure \ref{fig:plucking}.}
			(If $\pi_1(\G)=\Z$, in which case $\G$ is called a {\it çark}, $u(G)$ is a `{graph}' which consists of just one cycle and no nodes.) Besides $n$ and $g$, a bush $\G$ has a third characteristic, which is the number of punctures of $u(\G)$ containing a branch, denoted $b$. These latter punctures are called the {\it boundary cycles} of $\G$. As it is the case with the punctures, each flip induces a well-defined bijection on sets of boundary cycles.

	\begin{figure}
		\newcommand{\depth}{1}
		\newcommand{\Step}{1}
		
	\begin{center}
	\begin{tikzpicture}
		\draw[emphline]\vertex{0}{1}--(2,-1);
		\draw[emphline](1,-1)--(3,-1);
		\foreach \level in {0,...,\depth}
		{
			\newcommand{\LevelNodeNumber}{\NiL\level}
			\foreach \i in {1,...,\LevelNodeNumber}
			{
				{
					\ifthenelse{\level=\depth}
					{
						\draw[emphline]\vertex{\level+1}{2*\i}--\vertex{2+\level}{4*\i-1};
						\draw[emphline]\vertex{\level+1}{2*\i}--\vertex{2+\level}{4*\i};
						\draw[emphline]\vertex{\level+1}{2*\i-1}--\vertex{2+\level}{4*\i-2};
						\draw[emphline]\vertex{\level+1}{2*\i-1}--\vertex{2+\level}{4*\i-3};
						\ghostfork{1+\level}{2*\i}
						\ghostfork{1+\level}{(2*\i-1)}
					}
					{}
				}
				\draw[emphline]\vertex{\level}{\i}--\vertex{1+\level}{2*\i};
				\draw[emphline]\vertex{\level}{\i}--\vertex{1+\level}{2*\i-1};
				\edge{\vertex{\level}{\i}}{\vertex{1+\level}{2*\i}}
				\edge{\vertex{\level}{\i}}{\vertex{1+\level}{2*\i-1}}
			}
		}
		\edge{\vertex{0}{1}}{(2,-1)}
		\edge{(0,-1)}{(2,-1)}
		\edge{(4,-1)}{(2,-1)}
		\node at (0,-.9) [anchor=south] {$a$};
		\node at (4,-.9) [anchor=south] {$b$};
		
		\node at (5,0) [anchor=south, scale=2] {$\rightsquigarrow$};
		
		\edge{(10,-1)}{(6,-1)}
		\node at (10,-.9) [anchor=south] {$b$};
		\node at (6,-.9) [anchor=south] {$a$};
		
	\end{tikzpicture}
	\end{center}
		\caption{Plucking a maximal branch from a bush to form $u(\G)$.}
		\label{fig:plucking}
	\end{figure}
			
			To each boundary cycle  of the bush $\G$ we associate a cyclic vector $(x_1, x_2, \dots, x_k)$ called the {\it boundary necklace} as follows: Travel along the boundary cycle in the positive sense,  starting from an arbitrary edge.
			Set $x_1=1$ if the first degree-3 node encountered is the root of a branch to the right with respect to the travel direction and $x_1=0$ otherwise. Then repeat the procedure. (In case $\G$ is a çark, there are by definition two boundary cycles, with complementary boundary necklaces).
			
			The number $b$ of boundary cycles is invariant under the flips on $\G$. On the other hand, boundary necklaces themselves are not flip-invariant.
			
			\subsection{Çarks}
			Modular graphs with fundamental groups isomorphic to $\Z$ provides the simplest\footnote{If we allow pending edges; then the rooted trees $\F/\langle S\rangle$ and $\F/\langle L\rangle$ are simpler.} examples of bushes after the Farey tree $\F$ itself.
			They can be described as follows.
			
			A {\it hyperbolic (resp. parabolic)} {\it \c{c}ark} is a modular graph of the form $\mathcal{C}_M:=\langle M \rangle \backslash \F$ where $M\in \psl$ is hyperbolic (resp. parabolic). Since
			$$
			\pi_1 (\langle M \rangle\backslash \F, M)=\langle M \rangle\simeq \Z,
			$$ 
			 the \c{c}ark $\langle W \rangle\backslash \F$ is a graph with only one circuit, which we call its {\it spine}. 
			By Proposition~\ref{conjugatesubgroups} the graphs $\mathcal{C}_M$ and $\mathcal{C}_{XMX^{-1}}$ are isomorphic for every $X\in\psl$ and from Theorem~\ref{thm:modular/graph/vs/subgroups} we deduce the following result:
			
			\begin{theorem} (\cite{merve})
				There are one-to-one correspondences between
				\begin{itemize}
					\item[i.] hyperbolic (resp. parabolic) \c{c}arks and conjugacy classes of subgroups of the modular group generated by a single hyperbolic (resp. parabolic) element, and
					\item[ii.] hyperbolic (resp. parabolic) \c{c}arks with a base edge and subgroups of the modular group generated by a single hyperbolic (resp. parabolic) element.
				\end{itemize}
				\label{cor:1/to/1/corr/between/undirected/carks}
			\end{theorem}
			
			\begin{figure}[h!]
				\centering
				\includegraphics[scale=0.23]{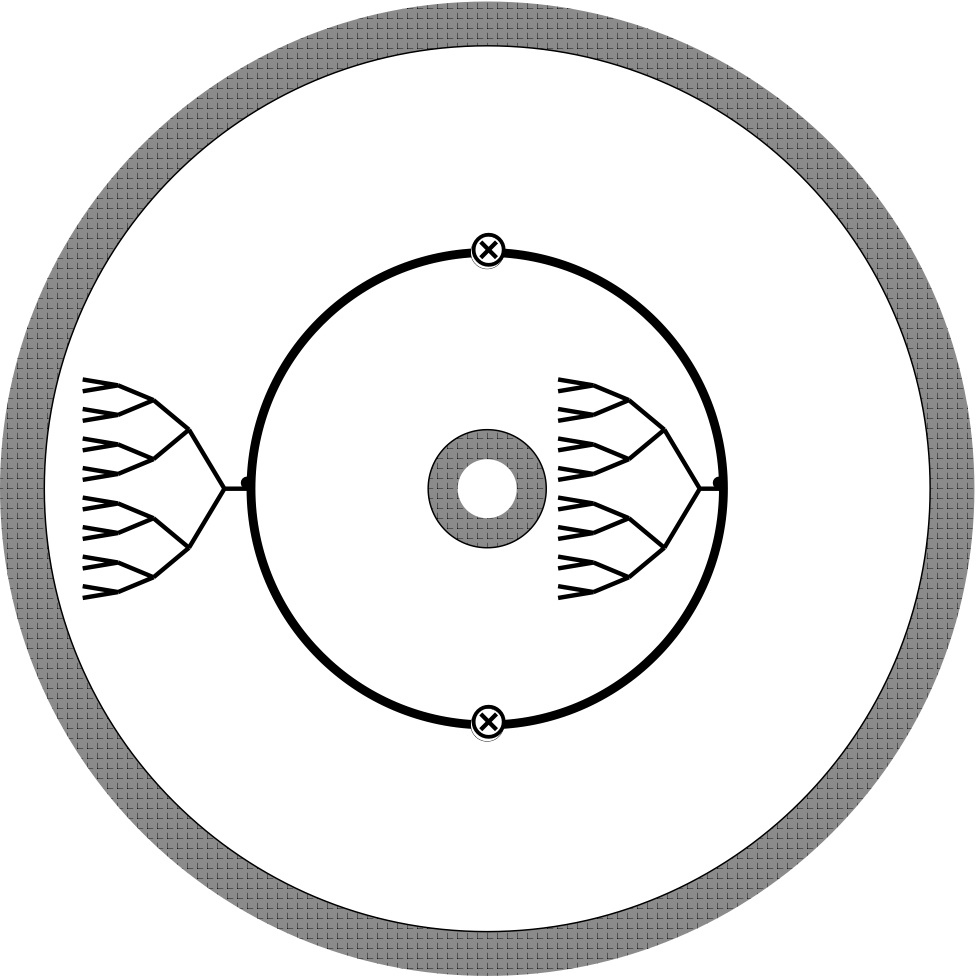}
				\caption{ The \c hyperbolic \c{c}ark $\F/\langle S L^2 S L \rangle$.}
				\label{fig:cark/1/-1}
			\end{figure}

			A \c{c}ark is said to be {\it directed }if we choose an orientation for its spine. 
			
			\begin{corollary} 
				There are one-to-one correspondences between
				\begin{itemize}
					\item[i.] hyperbolic (resp. parabolic) elements in the modular group and directed hyperbolic (resp. parabolic) \c carks with a base edge, and
					\item[ii.] conjugacy classes of hyperbolic (resp. parabolic) elements in the modular group and directed hyperbolic (resp. parabolic) \c{c}arks.
				\end{itemize}
				\label{cor:1/to/1/corr/between/directed/carks}
			\end{corollary}
			\vspace{-5mm}

			\subsection{Connected components of bushes in $\pimcg$}
	
	Let $\G$ be a bush of genus $g$ with $n$ punctures, $b$-many of which contain a branch. In this section we prove that the connected component of $\G$ in $\pimcg$ is determined by $(n,g,b)$. For this purpose, we first prove the following lemma.
	
	\begin{lemma} \label{lem:TransposePunctures}
d		Given a finite modular graph and two of its punctures, there exists a finite sequence of flips and relabelings that result in an isomorphic  modular graph, transposing the given two punctures and fixing the others.
	\end{lemma}
	
	\begin{proof}
		Let $\H$ be a finite modular graph with genus $g$ and with $n$ punctures.
		In case $n=1$ there is nothing to prove. For $n=2$, the surface has a spine as depicted in Figure~\ref{fig:TwoPunctureSurface}. By Lemma~\ref{whitehead}, we can transform $\H$ to this spine by a sequence of flips. We observe that a rotational symmetry gives an automorphism of the spine interchanging the punctures, thus the related relabeling transposes the punctures. A symmetric picture also exists for the case $(g,n)=(0,3)$ in the form of a sphere divided into three punctures by three equidistant meridians.
		\begin{figure}
			\begin{center}
				\tikzset{
					vviz/.style
					={green, fill=green},
					vshad/.style
					={green, fill=white},
					eviz/.style
					={thick, green},
					evizx/.style
					={thick, red},
					eshad/.style
					={green, very thin},
					eshadx/.style
					={red, very thin},
					surf/.style
					={thick}
				}
				\begin{tikzpicture}[%
					scale=1.4,
					decoration={markings,
						mark=at position 0.5 with {\coordinate (M);
							;}
					}]
					
					
					\draw[surf] (-1,1) -- (-3,1);
					\draw[surf,postaction={decorate}] (-3,1) .. controls (-5,1) and (-5,-1) .. (-3,-1);
					\draw[evizx] (-3.55, .05) .. controls (-3.6,0.1) and ($(M)+(0,.4)$) .. (M);
					\draw[eshadx] (-3.57, .06) 
					.. controls ++(.2,-.2) and ++(0,-.1) 
					.. (-3.38, -.23);
					\draw[eshadx] (-3.34, .13) .. controls (-3.5,0) and ($(M)-(0,.5)$) .. (M);
					\draw[evizx] (-3.35, .12) 
					.. controls ++(.1,.1) and ++(0,.06) 
					.. (-3.4, .24);
					\draw[surf]     (-3,-1) -- (-1,-1);
					\draw[dotted] (-1,-1) -- (1,-1);
					\draw[surf] (1,-1) -- (3,-1);
					\draw[postaction={decorate},surf]  (3,-1) .. controls (5,-1) and (5,1) .. (3,1);
					\draw[evizx] (3.35, .25) .. controls (3.5,.4) and ($(M)+(0,.4)$) .. (M);
					\draw[eshadx] (3.35, -.25) .. controls (3.5,-.4) and ($(M)-(0,.5)$) .. (M);
					\draw[surf] (3,1) -- (1,1);
					\draw[dotted] (1,1) -- (-1,1);
					
					\begin{scope}[shift={(3,0)}]
						\draw[eviz] (.55,.08) arc (45:135:.8 and 1);
						\draw[vviz] (.35, .25) circle (.05);
						\draw[eshad] (.55,.08) arc (20:-200:.6 and .27);
						\draw[vshad] (.35, -.23) circle (.05);
						\draw[surf] (.44,0) arc (45:135:.64 and .8);
						\draw[surf] (.7,.21) arc (315:225:1 and 1.2);
					\end{scope}

					\begin{scope}[shift={(-3,0)}]
						\draw[eviz] (.55,.08) arc (45:135:.8 and 1);
						\draw[vviz] (-.38, .25) circle (.05);
						\draw[eshad] (.55,.08) arc (20:-200:.6 and .27);
						\draw[vshad] (-.38, -.23) circle (.05);
						\draw[surf] (.44,0) arc (45:135:.64 and .8);
						\draw[surf] (.7,.21) arc (315:225:1 and 1.2);
					\end{scope}
					
					\begin{scope}[shift={(-1,0)}]
						\draw[eviz] (-.55,.08) arc (135:90:.8 and 1);
						\draw[eshad] (-.55,.08) arc (160:270:.6 and .27);
						\draw[surf] (-.44,0) arc (135:90:.64 and .8);
						\draw[surf] (-.7,.21) arc (225:270:1 and 1.2);
						
						\draw[eshadx] 
						(.35-2, -.23) .. controls (.69-2,-.35) and (1-2,-.35) .. (-.4, .06);
						\draw[evizx] 
						(-.38, .25) .. controls ++(.1,0) and ++(.0,-.05) .. (-.4, .07);
						
						\draw[evizx] 
						(.37-2, .23) .. controls (.69-2,.35) and (1-2,.25) .. (-.4, -.06);
						\draw[eshadx] 
						(-.38, -.23) .. controls ++(.1,.1) and ++(-.0,.05) .. (-.41, -.07);
						
						\draw[vviz] (.37-2, .25) circle (.05);
						\draw[vshad] (.35-2, -.23) circle (.05);
						\draw[vshad] (-.38, -.23) circle (.05);
						\draw[vviz] (-.36, .25) circle (.05);
					\end{scope}
					
					\begin{scope}[shift={(1,0)}]
						\draw[eviz] (.55,.08) arc (45:90:.8 and 1);
						\draw[eshad] (.55,.08) arc (20:-90:.6 and .27);
						\draw[surf] (.44,0) arc (45:90:.64 and .8);
						\draw[surf] (.7,.21) arc (315:270:1 and 1.2);
						
						\draw[eshadx] 
						(.35, -.23) .. controls (.69,-.35) and (1,-.35) .. (2-.4, .08);
						\draw[evizx] 
						(2-.38, .25) .. controls ++(.1,0) and ++(.0,-.05) .. (2-.4, .09);
						
						\draw[evizx] 
						(.35, .23) .. controls (.69,.35) and (1,.25) .. (2-.4, -.06);
						\draw[eshadx] 
						(2-.38, -.25) .. controls ++(.1,0) and ++(-.0,.05) .. (2-.41, -.06);
						
						\draw[vviz] (.35, .25) circle (.05);
						\draw[vshad] (.35, -.23) circle (.05);
						\draw[vshad] (2-.38, -.23) circle (.05);
						\draw[vviz] (2-.38, .25) circle (.05);
					\end{scope}
				\end{tikzpicture}
				\caption{A symmetric spine for surface of genus $g$ with two punctures. Both green and red curves denote full edges.}
				\label{fig:TwoPunctureSurface}
			\end{center}
		\end{figure}
		
		We assume that $\H$ is a finite modular graph such that $n\geq 3$ and $(g,n)\neq (0,3)$. We choose a puncture of $\H$, with at least two full-edges. We will  show that we can reduce the length of this puncture via flips so that the puncture is eventually comprised of just one edge (By puncture length, we refer to the number of full-edges, i.e. pairs of twin edges, on the puncture).
		
		\begin{enumerate}[label=\emph{Case \Roman*:}]
			\item
			The puncture contains full-edges whose ends are the same vertex.\\ In this case, we first choose $e$ as one of these edges. Let's call a subgraph consisting of two edges, one forming a loop and the other attached to the only end of the first one, a \emph{lollipop}. Since the ends of $e$ are the same vertex; $e$ forms a loop, giving rise to a lollipop. Consider the leftmost graph in Figure \ref{fig:ShortenPunctureSame}. The puncture of interest here is the region above the horizontal line and outside the loop, where $e$ is the edge forming the loop. The operation depicted in Figure \ref{fig:ShortenPunctureSame} can be viewed as a maneuver that relocates a lollipop within a puncture, sticking to an adjacent neighboring puncture. We observe that applying flips to the highlighted edges, we obtain the rightmost graph in the figure. If the region below the horizontal line is different from the region above the line, then this operation shortens the puncture of interest by 3 edges. In case the punctures are the same, then we first move the lollipop along the puncture using flips as shown in Figure~\ref{fig:LollipopMove} until the neighboring puncture is different. Repeating the procedure, we reach a graph with no full-edge whose ends are the same vertex.
			\begin{figure}
				\begin{center}
					\begin{tikzpicture}[scale=1]
						\tikzset
						{
							emphline/.style=
							{line width=2mm,yellow},
							emphline2/.style=
							{line width=2mm,pink}
						}
						
						\draw[dotted](0,-1)--(-.5,-1);
						\draw[dotted](2,-1)--(2.5,-1);
						\draw[emphline] (1,-1) -- (1,0);
						\halfedge{(1,-1)}{(0,-1)}
						\halfedge{(1,-1)}{(2,-1)}
						\edge{(1,-1)}{(1,0)}
						
						\def\factor{.7}
						\halfedgebezier{(1,0)}{(1,1)}{+(-\factor,0) and +(-\factor,0)} 
						\halfedgebezier{(1,0)}{(1,1)}{+(\factor,0) and +(\factor,0)} 
						
						\node at (3,-1) [scale=2] {$\rightsquigarrow$};

						\begin{scope}[shift={(4,0)}]
							\draw[dotted](0,-1)--(-.5,-1);
							\draw[dotted](3,-1)--(3.5,-1);
							\draw[emphline] (1,-1) .. controls +(0,\factor) and +(0,\factor) .. (2,-1);
							\halfedge{(1,-1)}{(0,-1)}
							\halfedge{(2,-1)}{(3,-1)}
							\draw[fedge] (1,-1) .. controls +(0,\factor) and +(0,\factor) .. (2,-1); 
							\draw[fedge] (1,-1) .. controls +(0,-\factor) and +(0,-\factor) .. (2,-1);
						\end{scope}
						
						\node at (8,-1) [scale=2] {$\rightsquigarrow$};
						
						\begin{scope}[shift={(9,0)}]
							\draw[dotted](0,-1)--(-.5,-1);
							\draw[dotted](2,-1)--(2.5,-1);
							\halfedge{(1,-1)}{(0,-1)}
							\halfedge{(1,-1)}{(2,-1)}
							\edge{(1,-1)}{(1,-2)}
							
							\halfedgebezier{(1,-2)}{(1,-3)}{+(\factor,0) and +(\factor,0)} 
							\halfedgebezier{(1,-2)}{(1,-3)}{+(-\factor,0) and +(-\factor,0)}
						\end{scope}
					\end{tikzpicture}
				\end{center}
				\caption{Shortening a puncture after a choice of an edge that forms a loop. At each step, a flip is applied on the highlighted edge. Note that the puncture bounding the region above the line and outside the loop in shortened, and the \emph{lollipop} is moved to the puncture below the horizontal line.}
				\label{fig:ShortenPunctureSame}
			\end{figure}
			\item For all edges of the puncture, the ends are different vertices.\\
			In this case, it is easy to observe that performing a flip on an edge $e$ shortens both neighboring punctures of $e$ by one, and increases the length of the other two punctures which are adjacent to the end vertices of $e$ by one. In Figure~\ref{fig:FlipRegions}, the puncture of interest is $A$, shortened punctures are $A$ and $B$, and lengthened punctures are $C$ and $D$. If the puncture of interest has no edge whose other neighbor is also the same puncture, then performing a flip on that edge would shorten the puncture. If some both neighbors are the puncture of interest for some of the edges, i.e. $B=A$ in the Figure~\ref{fig:FlipRegions}, then we choose an edge $e$ such that $B=A$ and at least one of $C$ or $D$ is different from $A$. Such an edge exist because $n>1$.
		\end{enumerate}
		\begin{figure}
			\begin{center}
				\begin{tikzpicture}
					\tikzset
					{
						emphline/.style=
						{line width=2mm,yellow},
					}
					
					\path[fill=yellow!50] (-1.83, 1.66) -- (-1,0) -- (1,0) -- (1.83, 1.66);
					\draw[dotted] (-1.5,1) -- (-1.83, 1.66);
					\draw[dotted] (-1.5,-1) -- (-1.83, -1.66);
					\draw[dotted] (1.5,1) -- (1.83, 1.66);
					\draw[dotted] (1.5,-1) -- (1.83, -1.66);
					
					\node at(0,.3){$e$};
					\edge{(-1,0)}{(1,0)}
					\halfedge{(-1,0)}{(-1.5,1)}
					\halfedge{(-1,0)}{(-1.5,-1)}
					\halfedge{(1,0)}{(1.5,1)}
					\halfedge{(1,0)}{(1.5,-1)}
					\node at (0,1.3){$A$};
					\node at (0,-1){$B$};
					\node at (-2,0){$C$};
					\node at (2,0){$D$};			
					
				\end{tikzpicture}
			\end{center}
			\caption{Punctures around an edge. The puncture to be shortened is $A$, colored in yellow.}
			\label{fig:FlipRegions}
		\end{figure}
		Note that each lollipop itself represents a puncture of length 1, and vice versa. Since we can shorten any puncture of length at least 2, we can turn any puncture into a lollipop by flips.

		As we see in Figure \ref{fig:LollipopMove}  we can move a lollipop within a puncture. Considering the ability to relocate the lollipop to an adjacent puncture, as illustrated in Figure \ref{fig:ShortenPunctureSame}, we have the freedom to select any lollipop, along with its accompanying half-edges, and position it at the 2-valent vertex of any twin edge, orienting it toward the neighboring puncture of choice. 
		
		Given any two punctures $A$ and $B$ of a modular graph $\H$, by flips we can transform them into lollipops and move them to adjacent edges as they face opposite directions, using the procedures described above. Then, as shown in Figure \ref{fig:LollipopSwitch}, we can swap them. After this, reversing the flips returns us to the original modular graph $\H$, only the punctures $A$ and $B$ changing places.
		
		\begin{figure}
			\begin{center}
				\begin{tikzpicture}
					\tikzset
					{
						emphline/.style=
						{line width=2mm,yellow},
					}
					
					\def\factor{.7}
					
					\begin{scope}[shift={(1,-1)}]   
						\edge{(0,0)}{(0,1)}
						\halfedgebezier{(0,1)}{(0,2)}{+(-\factor,0) and +(-\factor,0)} 
						\halfedgebezier{(0,1)}{(0,2)}{+(\factor,0) and +(\factor,0)}
					\end{scope}
					\begin{scope}[shift={(2,-1)}]   
						\edge{(0,0)}{(0,-1)}
						\halfedgebezier{(0,-1)} {(0,-2)}{+(\factor,0) and +(\factor,0)} 
						\halfedgebezier{(0,-1)}{(0,-2)}{+(-\factor,0) and +(-\factor,0)}
					\end{scope}
					\draw[dotted] (0,-1) -- (-.5,-1);
					\draw[dotted] (3,-1) -- (3.5,-1);
					\draw[emphline] (1,-1) -- (2,-1);
					\edge{(0,-1)}{(1,-1)}
					\edge{(1,-1)}{(2,-1)}
					\edge{(2,-1)}{(3,-1)}
					\node at (1,.5) {$A$};
					\node at (2,-2.5) {$B$};

					\node at (4,-1) [scale=2] {$\rightsquigarrow$};

					\begin{scope}[shift={(5,0)}]
						\begin{scope}[shift={(2,-1)}]   
							\edge{(0,0)}{(0,1)}
							\halfedgebezier{(0,1)}{(0,2)}{+(-\factor,0) and +(-\factor,0)} 
							\halfedgebezier{(0,1)}{(0,2)}{+(\factor,0) and +(\factor,0)}
						\end{scope}
						\begin{scope}[shift={(1,-1)}]   
							\edge{(0,0)}{(0,-1)}
							\halfedgebezier{(0,-1)} {(0,-2)}{+(\factor,0) and +(\factor,0)} 
							\halfedgebezier{(0,-1)}{(0,-2)}{+(-\factor,0) and +(-\factor,0)}
						\end{scope}
						\draw[dotted] (0,-1) -- (-.5,-1);
						\draw[dotted] (3,-1) -- (3.5,-1);
						\edge{(0,-1)}{(1,-1)}
						\edge{(1,-1)}{(2,-1)}
						\edge{(2,-1)}{(3,-1)}
						\node at (2,.5) {$A$};
						\node at (1,-2.5) {$B$};
						
					\end{scope}
					
					\node at (9,-1) [scale=2] {$\rightsquigarrow$};

					\begin{scope}[shift={(10,0)}]
						\begin{scope}[shift={(1,-1)}]   
							\edge{(0,0)}{(0,1)}
							\halfedgebezier{(0,1)}{(0,2)}{+(-\factor,0) and +(-\factor,0)} 
							\halfedgebezier{(0,1)}{(0,2)}{+(\factor,0) and +(\factor,0)}
						\end{scope}
						\begin{scope}[shift={(2,-1)}]   
							\edge{(0,0)}{(0,-1)}
							\halfedgebezier{(0,-1)} {(0,-2)}{+(\factor,0) and +(\factor,0)} 
							\halfedgebezier{(0,-1)}{(0,-2)}{+(-\factor,0) and +(-\factor,0)}
						\end{scope}
						\draw[dotted] (0,-1) -- (-.5,-1);
						\draw[dotted] (3,-1) -- (3.5,-1);
						\edge{(0,-1)}{(1,-1)}
						\edge{(1,-1)}{(2,-1)}
						\edge{(2,-1)}{(3,-1)}
						\node at (1,.5) {$B$};
						\node at (2,-2.5) {$A$};
					\end{scope}
				\end{tikzpicture}
			\end{center}
			\caption{Swapping lollipops. First transformation involves a flip on the highlighted edge. The second step is performing the operation described in Figure \ref{fig:LollipopMove} for both lollipops.}
			\label{fig:LollipopSwitch}
		\end{figure}	
	\end{proof}
	
	\begin{proposition}
		Any two labeled bushes with characteristics $(n,g,b)$ can be connected via a finite sequence of flips or relabelings.
	\end{proposition}
	
	\begin{proof} 
	Since relabelings are morphisms in $\pimcg$, it suffices to show that any two (non-labeled) bushes with the characteristics $(n,g,b)$ are related via flips.
	
		Let $\G$ be a bush.
		\begin{itemize}
			\item
			If a puncture of $u(\G)$ contains a branch, then that branch can be moved within the puncture by flips, sliding over the neighboring vertices if there is no branch rooted at that vertex pointing into the aforementioned puncture; as shown in Figure \ref{fig:BranchMove}. 
			\item 
			If two neighboring 3-valent vertices have branches pointing into the puncture, then these branches can be joined to form a single maximal branch by a flip, as shown in Figure \ref{fig:BranchJoin}.
		\end{itemize}

		\begin{figure}
			\newcommand{\depth}{1}
			\newcommand{\Step}{1}
			\begin{center}
				\begin{tikzpicture}[scale=.8]
					\tikzset
					{
						emphline/.style=
						{line width=2mm,yellow},
					}
					
					\edge{\vertex{0}{1}}{\vertex{1}{1}}
					\edge{\vertex{0}{1}}{\vertex{1}{2}}
					\foreach \level in {1,...,\depth}
					{
						\newcommand{\LevelNodeNumber}{\NiL\level}
						\foreach \i in {1,...,\LevelNodeNumber}
						{
							{
								\ifthenelse{\level=\depth}
								{
									\ghostfork{1+\level}{2*\i}
									\ghostfork{1+\level}{(2*\i-1)}
								}
								{}
							}
							\edge{\vertex{\level}{\i}}{\vertex{1+\level}{2*\i}}
							\edge{\vertex{\level}{\i}}{\vertex{1+\level}{2*\i-1}}
						}
					}
					\draw[emphline] (2,-1) -- (4,-1);
					\edge{\vertex{0}{1}}{(2,-1)}
					\edge{(0,-1)}{(2,-1)}
					\edge{(2,-1)}{(4,-1)}
					\edge{(4,-1)}{(6,-1)}
					\draw[dotted] (4,-2) -- (4,-2.5);
					\halfedge{(4,-1)}{(4,-2)}	
					
					\node at (7,0) [anchor=south, scale=2] {$\rightsquigarrow$};

					\begin{scope}[shift={(10,0)}]
						\edge{\vertex{0}{1}}{\vertex{1}{1}}
						\edge{\vertex{0}{1}}{\vertex{1}{2}}
						\foreach \level in {1,...,\depth}
						{
							\newcommand{\LevelNodeNumber}{\NiL\level}
							\foreach \i in {1,...,\LevelNodeNumber}
							{
								{
									\ifthenelse{\level=\depth}
									{
										\ghostfork{1+\level}{2*\i}
										\ghostfork{1+\level}{(2*\i-1)}
									}
									{}
								}
								\edge{\vertex{\level}{\i}}{\vertex{1+\level}{2*\i}}
								\edge{\vertex{\level}{\i}}{\vertex{1+\level}{2*\i-1}}
							}
						}
						\edge{\vertex{0}{1}}{(2,-1)}
						\edge{(-2,-1)}{(0,-1)}
						\edge{(0,-1)}{(2,-1)}
						\edge{(2,-1)}{(4,-1)}
						\draw[dotted] (0,-2) -- (0,-2.5);
						\halfedge{(0,-1)}{(0,-2)}
					\end{scope}
				\end{tikzpicture}
			\end{center}
			\caption{By a flip on the highlighted edge int the graph on the left, the branch rooted at vertex $b$ slides over the vertex $c$. }
			\label{fig:BranchMove}
		\end{figure}

		\begin{figure}
			\newcommand{\depth}{1}
			\newcommand{\Step}{1}
			\begin{center}
				\begin{tikzpicture}[scale=.8]
					\tikzset
					{
						emphline/.style=
						{line width=2mm,yellow},
						emphline2/.style=
						{line width=2mm,pink}
					}
					
					\edge{\vertex{0}{1}}{\vertex{1}{1}}
					\edge{\vertex{0}{1}}{\vertex{1}{2}}
					\foreach \level in {1,...,\depth}
					{
						\newcommand{\LevelNodeNumber}{\NiL\level}
						\foreach \i in {1,...,\LevelNodeNumber}
						{
							{
								\ifthenelse{\level=\depth}
								{
									\ghostfork{1+\level}{2*\i}
									\ghostfork{1+\level}{(2*\i-1)}
								}
								{}
							}
							\edge{\vertex{\level}{\i}}{\vertex{1+\level}{2*\i}}
							\edge{\vertex{\level}{\i}}{\vertex{1+\level}{2*\i-1}}
						}
					}
					\edge{\vertex{0}{1}}{(2,-1)}
					
					\begin{scope}[shift={(4,0)}]
						\edge{\vertex{0}{1}}{\vertex{1}{1}}
						\edge{\vertex{0}{1}}{\vertex{1}{2}}
						\foreach \level in {1,...,\depth}
						{
							\newcommand{\LevelNodeNumber}{\NiL\level}
							\foreach \i in {1,...,\LevelNodeNumber}
							{
								{
									\ifthenelse{\level=\depth}
									{
										\ghostfork{1+\level}{2*\i}
										\ghostfork{1+\level}{(2*\i-1)}
									}
									{}
								}
								\edge{\vertex{\level}{\i}}{\vertex{1+\level}{2*\i}}
								\edge{\vertex{\level}{\i}}{\vertex{1+\level}{2*\i-1}}
							}
						}
						\edge{\vertex{0}{1}}{(2,-1)}
					\end{scope}
					
					\draw[dotted](0,-1)--(-1,-1);
					\draw[dotted](8,-1)--(9,-1);
					\draw[emphline] (2,-1) -- (6,-1);
					\halfedge{(2,-1)}{(0,-1)}
					\edge{(2,-1)}{(6,-1)}
					\halfedge{(6,-1)}{(8,-1)}

					\node at (9.5,0) [anchor=south, scale=2] {$\rightsquigarrow$};

					\begin{scope}[shift={(11,0)}]
						\draw[emphline2] (2,-1) -- (2,0);
						\edge{\vertex{0}{1}}{\vertex{1}{1}}
						\edge{\vertex{0}{1}}{\vertex{1}{2}}
						\foreach \level in {1,...,\depth}
						{
							\newcommand{\LevelNodeNumber}{\NiL\level}
							\foreach \i in {1,...,\LevelNodeNumber}
							{
								{
									\ifthenelse{\level=\depth}
									{
										\ghostfork{1+\level}{2*\i}
										\ghostfork{1+\level}{(2*\i-1)}
									}
									{}
								}
								\edge{\vertex{\level}{\i}}{\vertex{1+\level}{2*\i}}
								\edge{\vertex{\level}{\i}}{\vertex{1+\level}{2*\i-1}}
							}
						}
						\edge{\vertex{0}{1}}{(2,-1)}
						
						\draw[dotted](0,-1)--(-1,-1);
						\draw[dotted](4,-1)--(5,-1);
						\halfedge{(2,-1)}{(0,-1)}
						\halfedge{(2,-1)}{(4,-1)}
					\end{scope}
				\end{tikzpicture}
			\end{center}
			\caption{Performing a flip on the highlighted edge in the graph on left results in the graph on right, where the edge on which the flip is applied corresponds to the pink edge and the two edges now forms a single maximal branch.}
			\label{fig:BranchJoin}
		\end{figure}

	Taking these two operations into account, we see we can modify $\G$ in such a way that any puncture of $u(\G)$ containing a branch in $\G$ will have a single maximal branch in it, and this singular branch can be freely relocated within the puncture. Hence, the connected component containing a bush $\G$ in $\pimcg$ is determined by $u(\G)$ and marking the punctures of $u(\G)$ that include a branch in $\G$, i.e. by a finite modular graph and a choice of a subset of its punctures. By Lemma~\ref{lem:TransposePunctures} we know that we can interchange any two punctures in a finite modular graph using flips and relabelings, and therefore we can conclude that two markings of punctures for a finite modular graph can be obtained from one another if and only if the number of marked punctures are the same.

	\begin{figure}
		\begin{center}
			\begin{tikzpicture}
				\tikzset
				{
					emphline/.style=
					{line width=2mm,yellow},
				}
				
				\def\factor{.7}
				
				\begin{scope}[shift={(1,-1)}]   
					\edge{(0,0)}{(0,1)}
					\halfedgebezier{(0,1)}{(0,2)}{+(-\factor,0) and +(-\factor,0)} 
					\halfedgebezier{(0,1)}{(0,2)}{+(\factor,0) and +(\factor,0)}
				\end{scope}
				\draw[dotted] (2,-1) -- (2,-2);
				\draw[dotted] (0,-1) -- (-.5,-1);
				\draw[dotted] (3,-1) -- (3.5,-1);
				\draw[emphline] (1,-1) -- (2,-1);
				\edge{(0,-1)}{(1,-1)}
				\edge{(1,-1)}{(2,-1)}
				\edge{(2,-1)}{(3,-1)}

				\node at (4,-1) [scale=2] {$\rightsquigarrow$};

				\begin{scope}[shift={(5,0)}]
					\begin{scope}[shift={(2,-1)}]   
						\edge{(0,0)}{(0,1)}
						\halfedgebezier{(0,1)}{(0,2)}{+(-\factor,0) and +(-\factor,0)} 
						\halfedgebezier{(0,1)}{(0,2)}{+(\factor,0) and +(\factor,0)}
					\end{scope}
					
					\draw[dotted] (1,-1) -- (1,-2);
					\draw[dotted] (0,-1) -- (-.5,-1);
					\draw[dotted] (3,-1) -- (3.5,-1);
					\edge{(0,-1)}{(1,-1)}
					\edge{(1,-1)}{(2,-1)}
					\edge{(2,-1)}{(3,-1)}

				\end{scope}
			\end{tikzpicture}
		\end{center}
		\caption{Moving lollipops.}
		\label{fig:LollipopMove}
	\end{figure}

	\end{proof}
	\subsection{Near isomorphism groupoids and $\pmg(\G)$ of a bush $\G$}
			We want to generalize the characterization of $\pmg(\F)$ as the group of near automorphisms of $\F$ to arbitrary $\pmg(\G)$, especially when $\G$ is a bush.
			As a first attempt, we introduce the notion of near ribbon-isomorphisms of labeled bushes.
			Let $\G_1$, $ \G_2$ be two labeled bushes of the same type $(n,g,b)$.
			A {\it near isomorphism} $\G_1 \to \G_2$ is a
			triple $(\sigma, \mathcal S_1, \mathcal S_2)$ where 
			$\mathcal S_1 \subset \G_1$ and $\mathcal S_2 \subset \G_2$ are finite connected subgraphs containing $u(\G_i)$ and $\sigma:E(\G_1)\to E(\G_2)$ is a bijection such that the restriction  
			$$\sigma: \G_1\setminus \mathcal S_1\to 
			\G_2\setminus \mathcal S_2$$  is a label-preserving ribbon graph isomorphism which respects boundary cycles (i.e. $\sigma$ sends the branches in one boundary cycle of $\G_1$ to branches in the same boundary cycle of $\G_2$, respecting their cyclic order on the respective boundary cycles).
			A {\it partial isomorphism} $\G_1 \to \G_2$ is the equivalence class of a near isomorphism, where two near isomorphisms which coincide on a cofinite subgraph are considered equivalent. Note that a rigid graph $\G$ may 
			have lots of near and partial automorphisms.
			
			We consider the groupoid $\mathbf{NMGR}_\ell$ whose objects are labeled bushes and morphisms are near ribbon isomorphisms between them, along with relabelings. Since each flip and relabeling induce a near isomorphism, we have morphisms of groupoids \label{nmgrl}
			\begin{equation*}
				\pimcg^b \to \mathbf{NMGR}_\ell \twoheadrightarrow \mathbf{PMGR}_\ell
			\end{equation*}
			where $\pimcg^b$ is the (disconnected) component of $\pimcg$ which admits bushes as its objects,
			the groupoid $\mathbf{PMGR}_\ell$ has the  labeled modular graphs as its objects and partial isomorphisms as its morphisms. \label{pmgrl}
			Set 
			$$
			\aut^{near}(\G):=\mathrm{Isot}(\mathbf{NMGR}_\ell, |\G, \ell|), \quad 
			\aut^\infty(\G):=\mathrm{Isot}(\mathbf{PMGR}_\ell, |\G, \ell|)
			$$ for some labeling $\ell$.  The latter is called the {\it group of partial automorphisms of $\G$}.
			Since by definition \label{modgdef} $\pmg(\G) :=\mathrm{Isot}(\pimcg, |\G,\ell|)$ for any labeled modular graph $\G$, one has for each labeled $\G$ the induced sequence of isotropy groups of connected components
			$$
			\pmg(\G) 
			\to 
			\aut^{near}(\G)
			\twoheadrightarrow
			\aut^\infty(\G).
			$$
			When $\G=\F$, the first homomorphism above is an isomorphism (cf. Eq.~\ref{symfary}). In fact, we have an isomorphism of connected components
			\begin{equation}\label{seq:nearnear}
				\pimcg(\F) \to \mathbf{NMGR}_\ell(\F) \twoheadrightarrow \mathbf{PMGR}_\ell(\F).
			\end{equation}
			However, when $\G\neq \F$, the first homomorphism in 
			(\ref{seq:nearnear}) is not an isomorphism  and we don't expect an exact sequence similar to (\ref{symfary}). Recovering (\ref{symfary}) requires the notion of groupoids of near and partial isomorphisms of fundamental groupoids of bushes, which we intend to discuss elsewhere. As for the group of partial automorphisms of  a bush $\G$ with characteristics $(n,g,b)$, we conjecture that 
 $\aut^\infty(\G)\simeq \mathsf T^b$ and there is a surjection
				$\mod(\G)\to \mathsf T^b$, where $\mathsf T$ is Thompson's group.

		\section{Outer Modular Groupoids}\label{outmodgr}
			Recall that a modular graph is a graph with a cyclic ordering of edges emanating from each one of its vertices.
			A {\it shuffle} of a modular graph $\G$ is the operation of reversing this orientation at a given vertex, as in 
			Figure~\ref{shufffle}.
			
			\begin{figure}[h!]
				\begin{center}
				\begin{tikzpicture}[font=\small]
					\begin{scope}[shift={(-2,0)},scale=.5]
						\draw [hedge] (-1,0)-- (1.37,0)
							node[pos=1,right]{3};
						\draw [hedge] (-1,0)-- (-2,2)
						node[pos=1,above left]{1};
						\draw [hedge] (-1,0)-- (-2,-2)
						node[pos=1,below left]{2};
					\end{scope}
					\node at (0,0){\huge $\rightsquigarrow$};
					\begin{scope}[shift={(2,0)},scale=.5]
						\draw [hedge] (2,0)-- (-0.3,0)
						node[pos=1,left]{3};
						\draw [hedge] (2,0)-- (3,2)
						node[pos=1,above right]{1};
						\draw [hedge] (2,0)-- (3,-2)
						node[pos=1,below right]{2};
					\end{scope}
				\end{tikzpicture}
				\end{center}
				\caption{A shuffle.}\label{shufffle}
			\end{figure}

			A shuffle of a labeled modular graph is defined likewise.
			Since shuffles do not modify the topological graph underlying the modular graph, they act trivially on fundamental groupoids.
			However, they do change the genus and the number of punctures of $\G$. By applying flips and shuffles to $\G $, we can obtain every trivalent ribbon graph whose fundamental group is isomorphic to $\pi_1(\G )$, provided that $\pi_1(\G )$ is finitely generated. Hence we may define the following (disconnected) groupoid:
			\begin{definition}\label{oomg}
				{\rm The} {\it outer modular groupoid}  {\rm is the groupoid $\omg$ whose object set is ${\mathrm{MGR}_\ell}$. Morphisms of $\omg$ are defined as
					\begin{equation}\label{grroupp}
						\mor_{\omg}(|\G_1^*,\ell_1|,|\G_2^*,\ell_2|):=
						\left\{ 
						\begin{array}{l} 
							\mbox{isomorphisms } \widetilde{\Pi}_1^{\mathcal \G_1}\to \widetilde{\Pi}_1^{\mathcal \G_2} \mbox{ induced by }\\
							\mbox{finite sequences of flips, shuffles \& }\\
							\mbox{permutations of labels}
						\end{array}
						\right\}
				\end{equation}}
			\end{definition}
			Obviously, the groupoids $\pimcg$ and $\omg$ share the same set of objects.
			The natural map $\pimcg\to \omg$ is an inclusion, since the morphisms of both groupoids are $\widetilde{\Pi}_1^{\mathcal \G}$-isomorphisms. In other words,
			$\pimcg < \omg$. Several connected components of $\pimcg$ becomes fused into one connected component inside $\omg$.
			
\begin{example} \label{ex:ThetaShuffles}
Let's consider the \emph{theta graph}, leftmost graph in Figure~\ref{fig:ThetaShuffles}. Applying shuffles on first vertex $x$ and then on vertex $y$, we obtain an isomorphic graph. If we identify the resulting graph with the original one so that this isomorphism fixes the edge $a$, then  we obtain the automorphism of the theta graph that transposes the edges $b$ and $c$. 
\begin{figure}[h!]
\begin{center}
	\begin{tikzpicture}[scale=.8]
	\node at (2,0){$\rightsquigarrow$};
	\node at (7,0){$\rightsquigarrow$};
	\def\factor{2}
	\begin{scope}[shift={(-1,0)},rotate=90]
		\draw[emphline](0,1) .. controls +(-\factor,0) and +(-\factor,0) .. (0,-1 ); 
		\draw[fedge]((0,1) .. controls +(-\factor,0) and +(-\factor,0) .. (0,-1 )
			node[pos=0, left]{$x$}
			node[pos=.5, below]{$a$}
			node[pos=1, right]{$y$};
		\draw[fedge](0,1)--(0,-1 )
			node[pos=.5, above]{$b$};
		\draw[fedge]((0,1) .. controls +(\factor,0) and +(\factor,0) .. (0,-1 )
			node[pos=.5, above]{$c$};
	\end{scope}
	
	\begin{scope}[shift={(5,0)},rotate=90]
		\draw[emphline](0,1) .. controls +(-\factor,0) and +(-\factor,0) .. (0,-1 ); 
		\draw[fedge]((0,1) .. controls +(-\factor,0) and +(-\factor,0) .. (0,-1 )
			node[pos=0, left]{$x$}
			node[pos=.5, below]{$a$}
			node[pos=1, right]{$y$};
		\draw[fedge](0,1) .. controls +(\factor,0) and +(0,.5*\factor) .. (0,-1 )
			node[pos=.25, above]{$b$};
		\fill[white](0.46,0) circle (.1);
		\draw[fedge](0,1) .. controls +(0,-.5*\factor) and +(\factor,0) .. (0,-1 )
			node[pos=.75, above]{$c$};

		\end{scope}
	
		\begin{scope}[shift={(10,0)},rotate=90]
		\draw[emphline](0,1) .. controls +(-\factor,0) and +(-\factor,0) .. (0,-1 ); 
		\draw[fedge]((0,1) .. controls +(-\factor,0) and +(-\factor,0) .. (0,-1 )
			node[pos=0, left]{$x$}
			node[pos=.5, below]{$a$}
			node[pos=1, right]{$y$};
		\draw[fedge](0,1)--(0,-1 )
			node[pos=.5, above]{$c$};
		\draw[fedge]((0,1) .. controls +(\factor,0) and +(\factor,0) .. (0,-1 )
			node[pos=.5, above]{$b$};
		\end{scope}
	
	\end{tikzpicture}
\end{center}
\caption{Two shuffles applied to the theta graph; first to the vertex $x$, then to the vertex $y$.}\label{fig:ThetaShuffles}
\end{figure}
\end{example}

			By $\omg(\G )$, we denote the connected component of $\omg$ containing a labeled graph $|\G, \ell|$ and by $\oomg(\G)$ we denote its isotropy group.
Hence, $\pmg(\G)<\oomg(\G)$ for any $\G$.
			
			In the particular case $\G=\F$, the group $\oomg(\F)$ consists of near automorphisms of $\F$ generated by ribbon graph near-automorphisms and shuffles. One has
			$$
			\pmg(\F)<\oomg(\F)<\pmg(\F^c),
			$$
where $\mod(\F^c)$ is the group of near automorphisms of $\F$ as a combinatorial graph (cf. pg. \pageref{nerretin}).
			One has the exact sequence
				\begin{align}\label{symfary9}
					1\to \mathsf{Sym}^\pm(\psl) \to \out(\F) 
					\stackrel{\gamma}{\to} \aut^\infty_s(\F) \to 1,
				\end{align}
				where $\aut^\infty_s(\F)$ is the group of partial automorphisms of $\F$ generated by ribbon graph near-automorphisms and shuffles.
			\begin{theorem}  
$\aut^\infty_s(\F)$ is isomorphic to Thompson's group $\mathsf V$.
			\end{theorem} 
Observe that $\mathsf T\simeq\aut^\infty(\F)<\aut^\infty_s(\F)<\aut^\infty(\F^c)$, where the latter is Neretin's spheromorphism group.		
\begin{proof}
As in the case of $\pmg(\F)$, it is easy to see that all representatives of an element of a near automorphism in $\oomg(\F)$ induce the same map (ignoring the discontinuity points which are finite in number) on the boundary, where to be definite we use the dyadic model for the boundary. These maps sends the intervals of some finite dyadic subdivision linearly onto the intervals of another, except that via shuffles the order of the intervals may be arbitrarily permuted. But this is exactly the description of elements of Thompson’s group 
$\mathsf V$ (see~\cite{belksthesis}).
\end{proof}

In case $\G$ is finite, the connected component $\omg(\G)$ is same for every $\G=\G_n^g$ with $r=2g+n-1$ which we denote as $\omg(r)$. Hence, $\omg(r)$ contains every $\pimcg(\G_n^g)$ with $r=2g+n-1$. Recall that $\pi_1(\G_n^g)$ is isomorphic to 
$\mathsf F_r$, the free group of rank $r$, and
\begin{equation}
	r=2g+n-1
	\implies 
	\out^\circ \left(\pi_1(\G)\right) \simeq\mod(\G)<\mathrm{Isot}(\omg, |\G,\ell|).
\end{equation}

In general, we have the diagram
\begin{center}
\begin{tikzpicture}[>=stealth, scale=2]
  \node (A) at (2,0) {$\omg(\G)$};
  \node (B) at (0,0) {$\oomg(\G)$};
  \node (C) at (0,1) {$\pmg(\G)$};
  \node (D) at (2,1) {$\pimcg(\G)$};

  \draw[->] (B) -- (A);
  \draw[right hook->] (C) -- (B);
  \draw[->] (C) -- (D);
  \draw[right hook->] (D) -- (A);
\end{tikzpicture}
\end{center}

Recall that there is a surjection $\pmg(\G) \twoheadrightarrow \mod(\G)$. 
In a similar vein, we have the following theorem:
\begin{theorem}\label{thm:OmgSurjection}
			If $\G:=\G_n^g$ is finite,
	then the isotropy group $\oomg(\G)$ surjects onto 
	$\out(\mathsf \pi_1(\G))$; i.e. 
	$\oomg(\G)\twoheadrightarrow\out(\mathsf F_r)$, where 
	$r=2g+ n-1$.
		\end{theorem}	
		It is of interest to know if there is an exact sequence,
\begin{equation*}
	1 
	\rightarrow 
	\mathsf{Sym}^\pm(E(\G))
	\rightarrow
	\oomg(\G)
	\rightarrow
	\out(\mathsf \pi_1(\G))
	\rightarrow
	1
\end{equation*}	
or if the kernel is actually larger than just $\mathsf{Sym}^\pm(E(\G))$.

We will use the following lemma in the proof of the theorem.

\begin{lemma}\label{lem:LollipopTree}
	Given a finite modular graph $\G$ of genus $g$ with $n$ punctures, there exists a modular graph $\H$ such that
	\begin{itemize}
		\item $\H$ is in the same connected component with $\G$ on $\omg$, i.e. there is a morphism $\G \to \H$ in $\omg$.
		\item $\H$ contains a tree whose complement consists of $2g+n-1$ loops.
	\end{itemize}
\end{lemma}

\begin{proof}
The top graph in Figure~\ref{fig:GraphPlanarization} has genus $g$ and $n$ punctures. By Whitehead's Lemma (Lemma~\ref{whitehead}), we can obtain this graph from $\G$ by a sequence of flips. Then, applying shuffles at vertices $x_1,  \dots, x_g$, we obtain the planar graph that is the bottom graph in Figure~\ref{fig:GraphPlanarization}, with $2g+n$ punctures. Then, again by Whitehead's Lemma, applying flips to this planar ribbon graph we can produce a ribbon graph $\H$ which contains a tree whose complement consists of $2g+n-1$ loops. (see~Figure~\ref{fig:LollipopTree}).
\begin{figure}[h!]
	\begin{center}  
		\begin{tikzpicture}[scale=.65,font=\small]
			\def\factor{1.4}
			\def\scopestep{1.6}
			\begin{scope}
				\begin{scope}[shift={(-6*\scopestep,0)}]
					\draw[fedge](0,1) .. controls +(-\factor,0) and +(0-\factor,0) .. (0,-1 )
					node[pos=0,above,minimum size=.6cm]{$x_1$};
					\draw[fedge](0,1) .. controls +(\factor,0) and +(0,\factor) .. (0,-1 );
					\draw[white, fill=white] (.16,-.18) circle (.12);
					\draw[fedge](0,1) .. controls +(-.6*\factor,-\factor) and +(-.4*\factor,-.2*\factor) .. (.7*\factor,0 );
					\draw[fedge](0,-1) .. controls +(.4*\factor,0) and +(0,-.4*\factor,0) .. (.7*\factor,0 );
				\end{scope}
				\begin{scope}[shift={(-5*\scopestep,0)}]
					\draw[fedge](-\scopestep+.7*\factor,0)--(\scopestep-.7*\factor,0);
				\end{scope}
				\begin{scope}[shift={(-4*\scopestep,0)}]
					\draw[fedge](0,1) .. controls +(-.4*\factor,0) and +(0,.4*\factor,0) .. (-.7*\factor,0 )
					node[pos=0,above,minimum size=.6cm]{$x_2$};
					\draw[fedge](0,-1) .. controls +(-.4*\factor,0) and +(0,-.4*\factor,0) .. (-.7*\factor,0 );		\draw[fedge](0,1) .. controls +(\factor,0) and +(0,\factor) .. (0,-1 );
					\draw[white, fill=white] (.16,-.18) circle (.12);
					\draw[fedge](0,1) .. controls +(-.6*\factor,-\factor) and +(-.4*\factor,-.2*\factor) .. (.7*\factor,0 );
					\draw[fedge](0,-1) .. controls +(.4*\factor,0) and +(0,-.4*\factor,0) .. (.7*\factor,0 );
				\end{scope}
				\begin{scope}[shift={(-3*\scopestep,0)}]
					\draw[dotted](\scopestep,0)--(0,0);
					\draw[hedge](-\scopestep+.7*\factor,0)--(0,0);
				\end{scope}
				\begin{scope}[shift={(-2*\scopestep,0)}]
					\draw[dotted](-\scopestep,0)--(0,0);
					\draw[hedge](\scopestep-.7*\factor,0)--(0,0);
				\end{scope}
				\begin{scope}[shift={(-1*\scopestep,0)}]
					\draw[fedge](0,1) .. controls +(-.4*\factor,0) and +(0,.4*\factor,0) .. (-.7*\factor,0 )
					node[pos=0,above,minimum size=.6cm]{$x_g$};
					\draw[fedge](0,-1) .. controls +(-.4*\factor,0) and +(0,-.4*\factor,0) .. (-.7*\factor,0 );		\draw[fedge](0,1) .. controls +(\factor,0) and +(0,\factor) .. (0,-1 );
					\draw[white, fill=white] (.16,-.18) circle (.12);
					\draw[fedge](0,1) .. controls +(-.6*\factor,-\factor) and +(-.4*\factor,-.2*\factor) .. (.7*\factor,0 );
					\draw[fedge](0,-1) .. controls +(.4*\factor,0) and +(0,-.4*\factor,0) .. (.7*\factor,0 );
				\end{scope}
				\begin{scope}[shift={(0,0)}]
					\draw[fedge](-\scopestep+.7*\factor,0)--(\scopestep-.7*\factor,0);
				\end{scope}
				\begin{scope}[shift={(\scopestep,0)}]
					\draw[fedge](-.7*\factor,0) .. controls +(0,\factor) and +(0,\factor,0) .. (.7*\factor,0);\draw[fedge](-.7*\factor,0) .. controls +(0,-\factor) and +(0,-\factor,0) .. (.7*\factor,0);
				\end{scope}
				\begin{scope}[shift={(2*\scopestep,0)}]
					\draw[dotted](\scopestep,0)--(0,0);
					\draw[hedge](-\scopestep+.7*\factor,0)--(0,0);
				\end{scope}
				\begin{scope}[shift={(3*\scopestep,0)}]
					\draw[dotted](-\scopestep,0)--(0,0);
					\draw[hedge](\scopestep-.7*\factor,0)--(0,0);
				\end{scope}
				\begin{scope}[shift={(4*\scopestep,0)}]
					\draw[fedge](-.7*\factor,0) .. controls +(0,\factor) and +(0,\factor,0) .. (.7*\factor,0);\draw[fedge](-.7*\factor,0) .. controls +(0,-\factor) and +(0,-\factor,0) .. (.7*\factor,0);
				\end{scope}
				\begin{scope}[shift={(5*\scopestep,0)}]
					\draw[fedge](-\scopestep+.7*\factor,0)--(\scopestep-.7*\factor,0);
				\end{scope}
				\begin{scope}[shift={(6*\scopestep,0)}]
					\draw[hedge](-.7*\factor,0) .. controls +(0,\factor) and +(0,\factor,0) .. (.7*\factor,0);
					\draw[hedge](-.7*\factor,0) .. controls +(0,-\factor) and +(0,-\factor,0) .. (.7*\factor,0);
				\end{scope}
				\draw [brace] (-7.8,-1.4) -- node [below, pos=0.5]{$(g-1)$ -many}(-.2,-1.4);
				\draw [brace] (.2,-1.4) -- node [below, pos=0.5]{$(n-2)$ -many}(7.8,-1.4);
			\end{scope}
			
			\begin{scope}[shift={(0,-5)}]
				\begin{scope}[shift={(-6*\scopestep,0)}]
					\draw[fedge](0,1) .. controls +(-\factor,0) and +(0-\factor,0) .. (0,-1 )
					node[pos=0,above,minimum size=.6cm]{$x_1$};
					\draw[fedge](0,1)--(0,-1 );
					\draw[fedge](0,1) .. controls +(.4*\factor,0) and +(0,.4*\factor,0) .. (.7*\factor,0 );
					\draw[fedge](0,-1) .. controls +(.4*\factor,0) and +(0,-.4*\factor,0) .. (.7*\factor,0 );
				\end{scope}
				\begin{scope}[shift={(-5*\scopestep,0)}]
					\draw[fedge](-\scopestep+.7*\factor,0)--(\scopestep-.7*\factor,0);
				\end{scope}
				\begin{scope}[shift={(-4*\scopestep,0)}]
					\draw[fedge](0,1) .. controls +(-.4*\factor,0) and +(0,.4*\factor,0) .. (-.7*\factor,0 )
					node[pos=0,above,minimum size=.6cm]{$x_2$};
					\draw[fedge](0,-1) .. controls +(-.4*\factor,0) and +(0,-.4*\factor,0) .. (-.7*\factor,0 );		\draw[fedge](0,1)--(0,-1 );
					\draw[fedge](0,1) .. controls +(.4*\factor,0) and +(0,.4*\factor,0) .. (.7*\factor,0 );
					\draw[fedge](0,-1) .. controls +(.4*\factor,0) and +(0,-.4*\factor,0) .. (.7*\factor,0 );
				\end{scope}
				\begin{scope}[shift={(-3*\scopestep,0)}]
					\draw[dotted](\scopestep,0)--(0,0);
					\draw[hedge](-\scopestep+.7*\factor,0)--(0,0);
				\end{scope}
				\begin{scope}[shift={(-2*\scopestep,0)}]
					\draw[dotted](-\scopestep,0)--(0,0);
					\draw[hedge](\scopestep-.7*\factor,0)--(0,0);
				\end{scope}
				\begin{scope}[shift={(-1*\scopestep,0)}]
					\draw[fedge](0,1) .. controls +(-.4*\factor,0) and +(0,.4*\factor,0) .. (-.7*\factor,0 )
					node[pos=0,above,minimum size=.6cm]{$x_g$};
					\draw[fedge](0,-1) .. controls +(-.4*\factor,0) and +(0,-.4*\factor,0) .. (-.7*\factor,0 );		\draw[fedge](0,1)--(0,-1 );
					\draw[fedge](0,1) .. controls +(.4*\factor,0) and +(0,.4*\factor,0) .. (.7*\factor,0 );
					\draw[fedge](0,-1) .. controls +(.4*\factor,0) and +(0,-.4*\factor,0) .. (.7*\factor,0 );
				\end{scope}
				\begin{scope}[shift={(0,0)}]
					\draw[fedge](-\scopestep+.7*\factor,0)--(\scopestep-.7*\factor,0);
				\end{scope}
				\begin{scope}[shift={(\scopestep,0)}]
					\draw[fedge](-.7*\factor,0) .. controls +(0,\factor) and +(0,\factor,0) .. (.7*\factor,0);\draw[fedge](-.7*\factor,0) .. controls +(0,-\factor) and +(0,-\factor,0) .. (.7*\factor,0);
				\end{scope}
				\begin{scope}[shift={(2*\scopestep,0)}]
					\draw[dotted](\scopestep,0)--(0,0);
					\draw[hedge](-\scopestep+.7*\factor,0)--(0,0);
				\end{scope}
				\begin{scope}[shift={(3*\scopestep,0)}]
					\draw[dotted](-\scopestep,0)--(0,0);
					\draw[hedge](\scopestep-.7*\factor,0)--(0,0);
				\end{scope}
				\begin{scope}[shift={(4*\scopestep,0)}]
					\draw[fedge](-.7*\factor,0) .. controls +(0,\factor) and +(0,\factor,0) .. (.7*\factor,0);\draw[fedge](-.7*\factor,0) .. controls +(0,-\factor) and +(0,-\factor,0) .. (.7*\factor,0);
				\end{scope}
				\begin{scope}[shift={(5*\scopestep,0)}]
					\draw[fedge](-\scopestep+.7*\factor,0)--(\scopestep-.7*\factor,0);
				\end{scope}
				\begin{scope}[shift={(6*\scopestep,0)}]
					\draw[hedge](-.7*\factor,0) .. controls +(0,\factor) and +(0,\factor,0) .. (.7*\factor,0);
					\draw[hedge](-.7*\factor,0) .. controls +(0,-\factor) and +(0,-\factor,0) .. (.7*\factor,0);
				\end{scope}
				\draw [brace] (-7.5,-1.4) -- node [below, pos=0.5]{$(g-1)$ -many}(-.5,-1.4);
				\draw [brace] (.5,-1.4) -- node [below, pos=0.5]{$(n-2)$ -many}(7.5,-1.4);
			\end{scope}
		\end{tikzpicture}
	\end{center}
	\caption{Above: a modular graph with genus $g$ and $n$ punctures. Below: its `planarization'.}\label{fig:GraphPlanarization}
\end{figure}

\begin{figure}[h!]
	\begin{center}
		\begin{tikzpicture}
			\def\factor{.4}
			\draw[fedge] (0,0) -- (1,1) node[pos=1,left]{$y_1$};
			\draw[hedge] (1,1) ..  controls +(-\factor,\factor) and +(-\factor,\factor) .. (1.6,1.6);
			\draw[hedge] (1,1)..  controls +(\factor,-\factor) and +(\factor,-\factor) .. (1.6,1.6);
			\draw[fedge] (0,0) -- (-1,1) node[pos=1,right]{$y_2$};
			\draw[hedge] (-1,1)..  controls +(-\factor,-\factor) and +(-\factor,-\factor) .. (-1.6,1.6);
			\draw[hedge] (-1,1)..  controls +(\factor,\factor) and +(\factor,\factor) .. (-1.6,1.6);
			\draw[fedge] (0,0) -- (0,-1);
			\draw[fedge] (0,-1) -- (-1,-1);
			\draw[fedge] (-1,-1) -- (-2,0) node[pos=1,right]{$y_3$};
			\draw[hedge] (-2,0)..  controls +(\factor,\factor) and +(\factor,\factor) ..  (-2.6,.6);
			\draw[hedge] (-2,0)..  controls +(-\factor,-\factor) and +(-\factor,-\factor) .. (-2.6,.6);
			\draw[fedge] (-1,-1) -- (-2,-2) node[pos=1,right]{$y_4$};
			\draw[hedge]  (-2,-2)..  controls +(\factor,-\factor) and +(\factor,-\factor) .. (-2.6,-2.6);
			\draw[hedge] (-2,-2)..  controls +(-\factor,\factor) and +(-\factor,\factor) .. (-2.6,-2.6);
			\draw[fedge] (0,-1) -- (1,-2) node[pos=1,left]{$y_5$};
			\draw[hedge] (1,-2)..  controls +(\factor,\factor) and +(\factor,\factor) .. (1.6,-2.6);
			\draw[hedge] (1,-2)..  controls +(-\factor,-\factor) and +(-\factor,-\factor) .. (1.6,-2.6);		\end{tikzpicture}
	\end{center}
	\caption{The graph $\H$.}\label{fig:LollipopTree}
\end{figure}
\end{proof}

\begin{proof}[Proof of Theorem~\ref{thm:OmgSurjection}]
Let $\mathsf F_r=\langle \alpha_1, \dots, \alpha_r\rangle$.
Recall that $\out(\mathsf F_r)$ is generated by isomorphisms of the following 3 types
(\cite{MKS}, Theorem 3.2, see also \cite{FR}, Theorem 5.2).
	\begin{enumerate}[label=Type \Roman*:]
		\item inversions $\alpha_i \mapsto \alpha_i^{-1}$ for $i=1,\dots,r$, 
		\item permutations of generators   $\alpha_i \leftrightarrow \alpha_j$ for $i,j=1,\dots,r$,
		\item the maps  $\alpha_i \mapsto \alpha_i\alpha_j$ for  $i,j=1,\dots,r, i\neq j$.
	\end{enumerate}

%
%

For $\G$, let $\H$ be a modular graph with properties described in Lemma~\ref{lem:LollipopTree}.
 Observe that $\pi_1(\H) \cong \mathsf F_r$ for $r=2g+n-1$.

We pick an edge $e$ of $\H$ which is not adjacent to a loop as the base edge. The loops in $\H$ are cyclically ordered such that two consecutive loops are joined by a sequence of left-turning edges. We number the loops by $1, \dots, r$ consistent with this order. We denote the 3-valent vertex on the $i$'th loop by $y_i$. For each $i\in\{1,\dots,r\}$, let $\alpha_i$ denote the path going from $e$ to $y_i$ on $\H$, then traversing the $i$'th loop counter-clockwise and lastly returning to $e$ again on $\H$. Note that the fundamental group $\pi_1(\H)$ is freely generated by $\alpha_1, \dots, \alpha_r$.

Shuffles at the vertices $y_i$ reverse the orientations of the corresponding loops sending $\alpha_i$ to $\alpha_i^{-1}$ and therefore yield automorphisms of  Type I.

Now let's handle the automorphisms of Type II. We choose two consecutive loops of $\H$ with respect to the cyclic order.
Applying flips to edges between these loops other than the ones directly connected to them, we obtain a local picture as in Figure~\ref{fig:TwinLoops}. A shuffle at the common root vertex $x$ of the \emph{neighboring lollipops} transposes the loops. Undoing the flips we applied to shorten the distance between the loops, we return to the graph $\H$; with the sole difference that the consecutive loops are transposed, and so are the corresponding generators of the fundamental group. Since the permutation group is generated by transpositions $(\alpha_i\alpha_{i+1})$, we can obtain any automorphism of Type II.
\begin{figure}[h!]
	\begin{center}
		\begin{tikzpicture}
			\def\factor{.4}
			\edge{(0,0)}{(1,1)}
			\node[anchor=north west](X){$x$};
			\halfedgebezier{(1,1)}{(1.6,1.6)}{+(-\factor,\factor) and +(-\factor,\factor)} 
			\halfedgebezier{(1,1)}{(1.6,1.6)}{+(\factor,-\factor) and +(\factor,-\factor)}
			\edge{(0,0)}{(-1,1)}
			\halfedgebezier{(-1,1)}{(-1.6,1.6)}{+(-\factor,-\factor) and +(-\factor,-\factor)} 
			\halfedgebezier{(-1,1)}{(-1.6,1.6)}{+(\factor,\factor) and +(\factor,\factor)}
			\halfedge{(0,0)}{(0,-.75)}
			\draw[dotted] (0,-.75) -- (0,-1.5);
		\end{tikzpicture}
	\end{center}
	\caption{Neighboring lollipops.}\label{fig:TwinLoops}
\end{figure}

In order to obtain automorphism of Type III, we first choose $i \in \{1,\dots,r\}$ and apply flips as above so that the $i-1$'st and $i$'th lollipops become neighbors. Then we implement the following procedure depicted in Figure~\ref{fig:AddLoop}. In graphs (I) and (II), we apply flips to the edges highlighted by yellow. Then we apply shuffles at the vertices highlighted by pink in graph (III), similar to shuffles in Figure~\ref{fig:ThetaShuffles}. Then, again applying flips to edges highlighted by yellow in graphs (IV) and (V), we return to graph (VI) which is isomorphic to graph (I). Lastly, by a relabeling we return to the original graph (I).

\begin{figure}[h!]
\begin{center}
\begin{tikzpicture}[scale=.8, font=\footnotesize]
\def\factor{.4}
\begin{scope}
	\draw[emphline](0,0)--(-1,1);
	\draw[fedge] (0,0)--(0,-1.2)
		node[pos=.5,left]{$a$};
	\draw[fedge] (0,0) -- (-1,1)
		node[pos=.5,below left]{$b$};
	\draw[hedge] (-1,1) .. controls +(-\factor,-\factor) and +(-\factor,-\factor) .. (-1.6,1.6)
		node[pos=1,above left]{$c$};
	\draw[hedge] (-1,1) .. controls +(\factor,\factor) and +(\factor,\factor) .. (-1.6,1.6);
	\draw[fedge] (0,0) -- (1,1)
		node[pos=.5,below right]{$d$};
	\draw[hedge] (1,1) .. controls +(-\factor,\factor) and +(-\factor,\factor) .. (1.6,1.6)
		node[pos=1,above right]{$e$};
	\draw[hedge] (1,1) .. controls +(\factor,-\factor) and +(\factor,-\factor) .. (1.6,1.6);
	\draw[dotted] (0,-1.2) -- (.5,-1.7);
	\draw[dotted] (0,-1.2) -- (-.5,-1.7);
	\node at (0,-2){(I)};
\end{scope}

\begin{scope}[shift={(5,0)}]
	\draw[emphline](.5,.5)--(1,1);
	\draw[fedge] (.5,.5)--(1,1)
		node[pos=.5,below right]{$d$};
	\draw[hedge] (1,1) .. controls +(-\factor,\factor) and +(-\factor,\factor) .. (1.6,1.6)
		node[pos=1,above right]{$e$};
	\draw[hedge] (1,1) .. controls +(\factor,-\factor) and +(\factor,-\factor) .. (1.6,1.6);
	\draw[fedge] (0,-.5) .. controls +(-2*\factor,\factor) and +(-\factor,\factor) .. (.5,.5)
		node[pos=.5,above left]{$c$};
	\draw[fedge] (0,-.5) .. controls +(2*\factor,-.5*\factor) and +(\factor,-.5*\factor) .. (.5,.5)
		node[pos=.5,below right]{$b$};
	\draw[fedge] (0,-.5)--(0,-1.2)
		node[pos=.5,left]{$a$};;
	\draw[dotted] (0,-1.2) -- (.5,-1.7);
	\draw[dotted] (0,-1.2) -- (-.5,-1.7);
	\node at (0,-2){(II)};
\end{scope}

\begin{scope}[shift={(11,0)}]
	\coordinate (A) at (0,-.5);	
	\coordinate (B) at (.7,.5);	
	\coordinate (C) at (0,1.5);
	\coordinate (D) at (-.7,.5);
	
	\fill[pink] (B) circle (8pt);
	\fill[pink] (D) circle (8pt);
	\draw[fedge] (B) -- (D)
		node[pos=.5,above]{$d$};
	\draw[fedge] (A) .. controls +(\factor,0) and +(0,-\factor) .. (B)
		node[pos=.5,right]{$b$};
	\draw[hedge] (B) .. controls +(0,\factor) and +(\factor,0) .. (C)
		node[pos=1,above]{$e$};
	\draw[fedge] (A) .. controls +(-\factor,0) and +(0,-\factor) .. (D)
		node[pos=.5,left]{$c$};;
	\draw[hedge] (D) .. controls +(0,\factor) and +(-\factor,0) .. (C);
	\draw[fedge] (A) -- (0,-1.2)
		node[pos=.5,left]{$a$};;
	\draw[dotted] (0,-1.2) -- (.5,-1.7);
	\draw[dotted] (0,-1.2) -- (-.5,-1.7);
	\node at (0,-2){(III)};
\end{scope}

\begin{scope}[shift={(0,-5)}]
	\coordinate (A) at (0,-.5);	
	\coordinate (B) at (.7,.5);	
	\coordinate (C) at (0,1.5);
	\coordinate (D) at (-.7,.5);
	
	\draw[emphline] (B) -- (D);
	
	\draw[fedge] (B) -- (D)
	node[pos=.5,above]{$e$};
	\draw[fedge] (A) .. controls +(\factor,0) and +(0,-\factor) .. (B)
	node[pos=.5,right]{$b$};
	\draw[hedge] (B) .. controls +(0,\factor) and +(\factor,0) .. (C)
	node[pos=1,above]{$d$};
	\draw[fedge] (A) .. controls +(-\factor,0) and +(0,-\factor) .. (D)
	node[pos=.5,left]{$c$};;
	\draw[hedge] (D) .. controls +(0,\factor) and +(-\factor,0) .. (C);
	\draw[fedge] (A) -- (0,-1.2)
	node[pos=.5,left]{$a$};;
	\draw[dotted] (0,-1.2) -- (.5,-1.7);
	\draw[dotted] (0,-1.2) -- (-.5,-1.7);
	\node at (0,-2){(IV)};
\end{scope}

\begin{scope}[shift={(5,-5)}]
	\draw[emphline] (0,-.5) .. controls +(2*\factor,-.5*\factor) and +(\factor,-.5*\factor) .. (.5,.5)
	node[pos=.5,below right]{$b$};
	
	\draw[fedge] (.5,.5)--(1,1)
	node[pos=.5,below right]{$e$};
	\draw[hedge] (1,1) .. controls +(-\factor,\factor) and +(-\factor,\factor) .. (1.6,1.6)
	node[pos=1,above right]{$d$};
	\draw[hedge] (1,1) .. controls +(\factor,-\factor) and +(\factor,-\factor) .. (1.6,1.6);
	\draw[fedge] (0,-.5) .. controls +(-2*\factor,\factor) and +(-\factor,\factor) .. (.5,.5)
	node[pos=.5,above left]{$c$};
	\draw[fedge] (0,-.5) .. controls +(2*\factor,-.5*\factor) and +(\factor,-.5*\factor) .. (.5,.5)
	node[pos=.5,below right]{$b$};
	\draw[fedge] (0,-.5)--(0,-1.2)
	node[pos=.5,left]{$a$};;
	\draw[dotted] (0,-1.2) -- (.5,-1.7);
	\draw[dotted] (0,-1.2) -- (-.5,-1.7);
	\node at (0,-2){(V)};
\end{scope}

\begin{scope}[shift={(11,-5)}]
	\draw[fedge] (0,0)--(0,-1.2)
	node[pos=.5,left]{$a$};
	\draw[fedge] (0,0) -- (-1,1)
	node[pos=.5,below left]{$b$};
	\draw[hedge] (-1,1) .. controls +(-\factor,-\factor) and +(-\factor,-\factor) .. (-1.6,1.6)
	node[pos=1,above left]{$c$};
	\draw[hedge] (-1,1) .. controls +(\factor,\factor) and +(\factor,\factor) .. (-1.6,1.6);
	\draw[fedge] (0,0) -- (1,1)
	node[pos=.5,below right]{$e$};
	\draw[hedge] (1,1) .. controls +(-\factor,\factor) and +(-\factor,\factor) .. (1.6,1.6)
	node[pos=1,above right]{$d$};
	\draw[hedge] (1,1) .. controls +(\factor,-\factor) and +(\factor,-\factor) .. (1.6,1.6);
	\draw[dotted] (0,-1.2) -- (.5,-1.7);
	\draw[dotted] (0,-1.2) -- (-.5,-1.7);
	\node at (0,-2){(VI)};
\end{scope}
\end{tikzpicture}
\end{center}
\caption{Morphisms in $\oomg$ on neighboring lollipops. Yellow highlight indicate flips and pink highlights indicate shuffles.}\label{fig:AddLoop}
\end{figure}

The letters $a,b,c,d$ and $e$ in the graph diagrams denote labels of the full edges. We assume counterclockwise order for the half edges on the loops, and we indicate the clockwise order by the inverse notation, as in $a^{-1}$. The fundamental group generators related to the $i-1$'st and $i$'th loops can be described by the full-edge sequences $adeda$ and $abcba$, respectively, in graph (I). In the fundamental group of graph (II), they are mapped to paths $abdedba$ and $acba$, respectively. The paths they are mapped in each graph are described in the following table:

\begin{center}
	\begin{tabular}{l||c|c|c|c|c|c}
		&	(I)		&	(II)		&	(III)	&	(IV)	&	(V)				&	(VI)	\\ [1ex]	\hline  
$i-1$	&	$adeda$	&	$abdedba$	&	$abedba$&	$abedba$&	$abed^{-1}eba$	&	$aed^{-1}ea$	\rule[3pt]{0pt}{10pt}\\
$i$		&	$abcba$	&	$acba$		&	$acdba$	&	$acdba$	&	$acedeba$		&	$abcbedea$
	\end{tabular}
\end{center}

Reversing the initial procedure to obtain the neighboring lollipops, we end up with an isomorphism on the fundamental group mapping $\alpha_{i-1}$ to $\alpha_{i-1}^{-1}$ and $\alpha_{i}$ to $\alpha_{i}\alpha_{i-1}$. Composing this morphism with a morphism of Type I gives us an isomorphism of fundamental groups fixing every generator other than $\alpha_i$, which is mapped to $\alpha_i\alpha_{i-1}$. Conjugate of this morphism with a morphism of Type II transposing $\alpha_{i-1}$ and an $\alpha_j$ for any choice of $j$ produces the desired morphism of Type III.

\end{proof}			

			
			\section{$\pimcg$-representation on punctures}
			Recall that flips and relabelings maps punctures to punctures. 
			Consider the map $\psi$ sending each element of ${\mathrm{MGR}}_\ell$ to its set of punctures and each morphism
			of $\pimcg$ to the bijection it induces between the sets of punctures. We have an exact sequence of groupoids
			\begin{equation} \label{puregrp}
				1\to \mathbf{PUR} \to \pimcg \stackrel{\psi}{\longrightarrow} \mathbf{PUN}
			\end{equation}
			where $\mathbf{PUN}$ is the groupoid whose object set is ${\mathrm{MGR}}_\ell$ and morphisms are bijections among their puncture sets. \label{purrpunn}
			 $\mathbf{PUR}$ is			the kernel of $\psi$: it is the totally disconnected groupoid whose object set is ${\mathrm{MGR}}_\ell$ and whose set of morphisms is a subset of the flip-induced fundamental groupoid automorphisms that keep each puncture fixed. (Since flips and relabelings preserve the set of finite (resp. infinite) punctures; one may also consider the version $\psi_{finite}$ (resp. $\psi_\infty$) 
of $\psi$ sending the morphisms of $\pimcg$ to 
bijections among finite (resp. infinite) punctures.)
			
			If $\mathcal G$ is finite, then the isotropy group of $\mathbf{PUR}(\mathcal G)$ surjects onto the pure mapping class group.
			The case of  infinite $\mathcal G$ is most interesting when $\mathcal G$  has no finite punctures.			For instance, one has
			\begin{equation}\label{purpun}
				1\to \mathbf{PUR}(|\F,\ell|) \to \pmg(|\F,\ell|) \to \mathbf{PUN}(|\F,\ell|)
			\end{equation}
			for every labeling $\ell$. 
Since we may identify (via the continued fraction map) the set of punctures of $\F$ by 
$\overline{\Q}:=\Q\cup\{\infty\}$, the groupoid $\mathbf{PUN}(|\F,\ell|)$
is a subgroupoid of $\mathsf{Sym}(\overline{\Q})$.
These permutations are exactly those which admits an extension to the circle as elements of $\ppsl$, i.e.
$$
\mathbf{PUN}(|\F,\ell|)\simeq \aut^\infty(\F) \simeq \ppsl\simeq \mathsf T.
$$
Comparing  (\ref{purpun}) with (\ref{symfary}) shows that 
$\mathbf{PUR}(|\F,\ell|)$ is the group $\mathsf{Sym}^\pm(\psl)$.

 For other infinite elements of ${\mathrm{MGR}_\ell}$ without finite punctures (e.g. hyperbolic çarks), we suspect that the group $\mathsf{Sym}^\pm(|\G,\ell|)$ of finite edge permutations is normal in $\mathbf{PUR}(|\G,\ell|)$; the quotient being related to the mapping class group of the compact surface $S$ obtained from an ambient surface of $\mathcal G$ by filling in the boundary components by discs. 

			\small
			\bibliographystyle{abbrv}

\begin{thebibliography}{10}
				
				\bibitem{serban}
				{Basarab, S.~A.}
				\newblock {\em On a problem raised by Alperin and Bass. I: Group actions on groupoids.}
				\newblock Journal of Pure and Applied Algebra
				Volume 73, Issue 1, 1991, 1-12.

\bibitem{belksthesis}	
J. M. Belk.
\newblock {\em Thompson’s Group F} 
Ph.D. Thesis, Cornell University August 2004.
		
				\bibitem{abstract/concrete/categories}
				{{Ad\'amek J.}, {Herrlich H.} and {Strecker G.E.}.}
				Abstract and concrete categories. The joy of cats. New York etc.: John Wiley \&| Sons, Inc., 1990.
				
				
				\bibitem{brown}
				Brown. R.
				Topology and Groupoids. Taschenbuch 2006.
				
				\bibitem{cannon}
				J.~W.~Cannon, W.~J.~Floyd, and W.~R.~Parry. {\it Introductory notes on Richard Thompson's groups.} 
				Enseignement Mathématique 42 (1996): 215-256.
				
				\bibitem{yves}
				Y. Cornulier. {\it Near actions.} arXiv preprint arXiv:1901.05065. 2019 Jan 14.
				
				\bibitem{culler}
				M. Culler and K. Vogtmann. {\it Moduli of graphs and automorphisms of free}
				groups, Invent. Math. 84 (1986), no. 1, 91–119.

                                   \bibitem{FR}
                                     D. I. Fuchs-Rabinovich. {\it On automorphism groups of free products. II}, Mat. Sb. 9(51) (1941), 183–220.


				
				
				\bibitem{funar}
				L.~Funar.
				\newblock {\it Ptolemy groupoids actions on Teichm\"{u}ller spaces.}
				\newblock In { Modern Trends in Geometry and Topology}, pages 185--201,
				Coventry, 2006. Cluj Univ. Press.
				
				\bibitem{gosper}
				R. W. Gosper. {\it Continued fraction arithmetic.}
				HAKMEM Item 101B, MIT Artificial Intelligence Memo, (1972), 239.
				
				
				
				\bibitem{hatcher}
				A.~Hatcher. {\it On triangulations of surfaces.} Topology Appl. 40 (1991), 189-194.
				
				\bibitem{vlad}
				Kapoudjian C, Sergiescu V. {\it An extension of the Burau representation to a mapping class group associated to Thompson’s group T}, Geometry and dynamics, 141–164. Contemp. Math.;389.
				
				
				
				\bibitem{imbert}
				M.~Imbert.
				\newblock {\it Sur l'isomorphisme du groupe de {R}ichard {T}hompson avec le groupe
					de {P}tol\'em\'ee.}
				\newblock In {Geometric {G}alois Actions 2},  {LMS Lecture Note} {\bf 243}  313--324. Cambridge 
				1997.
				
				\bibitem{kulkarni}
				R. S. Kulkarni. {\it An arithmetic-geometric method in the study of the subgroups of the modular group.} American Journal of mathematics, 113(6), 1053-1133. 1991
				
				\bibitem{merve}
				Durmuş, M. (2012). Farey graph and binary quadratic forms (Master’s thesis, Istanbul Technical University, Turkey).
						\bibitem{millington} 
				Millington, M. H. {\it Subgroups of the classical modular group.} Journal of the London Mathematical Society 2.1 (1969): 351-357.
				\bibitem{mosher}
				L.~Mosher. {\it  Mapping class groups are automatic.} Ann. of Math. 142 (1995), 303–384.
				

\bibitem{MKS} W. Magnus, A. Karass, A. and D. Solitar, {\it Combinatorial Group Theory.}
Wiley, New York, 1966. 
			
				\bibitem{neretin}
				Neretin YA.
				 \newblock {\it On combinatorial analogs of the group of diffeomorphisms of the circle.}
				  \newblock  Izvestiya: Mathematics. 1993 Apr 30;41(2):337.
				  
				\bibitem{northshield}
			Northshield, S.
			 \newblock {\it Geodesics and Bounded Harmonic Functions on Infinite Planar Graphs}, 
				\newblock Proc. Am. Math. Soc., 113, 1, 1991, pp. 229–33. doi:10.2307/2048463
				
			
				\bibitem{universal}
				R.~C.~Penner.
				\newblock {\it Universal constructions in {T}eichm\"uller theory.}
				\newblock { Adv. Math.}, 98 (2): 143--215, 1993.
				
				\bibitem{completions}
				R.~C.~Penner.
				\newblock {\it The universal {P}tolemy group and its completions.}
				\newblock In {Geometric {G}alois actions 2},  { LMS Lecture Note Ser.} {\bf 243}  313--324. Cambridge 1997.
				\bibitem{decorated}
				R.~C.~{Penner.}
				\newblock { {Decorated Teichm\"uller theory.}} EMS Publications, 2012.
				\bibitem{modulargroupanditsactions}
				A.~M.~Uluda\u{g}
				\newblock {\it The modular group and its actions,} 
				\newblock Handbook of group actions. {V}ol. {II},
    {Adv. Lect. Math. (ALM)},
 {32},

				\bibitem{panorama}
				A.~M.~Uluda\u{g} and A.~Zeytin.
				\newblock {\it A panorama of the fundamental group of the modular orbifold}.
				\newblock Handbook of Teichm\"uller theory. Volume VI. EMS, 2016.
				
				
				\bibitem{UZD}
				A.~M.~Uluda\u{g}, A.~Zeytin, and M.~Durmu\c{s}.
				\newblock {\it Binary quadratic forms as dessins.}
				\newblock  J. Th\'{e}or. Nombres Bordeaux (2018).
				
				\bibitem{vogtmann}
				K. Vogtmann 
				\newblock {\it Automorphisms of free groups and outer space.}
				\newblock  Geometriae Dedicata. 2002 Oct;94:1-31.
				
				
			\end{thebibliography}
\section*{Acknowledgements}
This research has been funded by the grants GSÜBAP FIR-2022-1089, 
TÜBİTAK BOSPHORUS 221N171,
TÜBİTAK 115F412, and TÜBİTAK  119F405. \"O. \"Ulkem was supported by TÜBİTAK project no. 119F405.
			We are indebted to Athanase Papadopoulos, Louis Funar and to Vlad Sergiescu for their comments on the work.

		\end{document}